\apptocmd{\bibsetup}{\sloppy}{}{}
\def\CC{\mathbb{C}}
\def\EE{\mathbb{E}}
\def\FF{\mathbb{F}}
\def\GG{\mathbb{G}}
\def\LL{\mathbb{L}}
\def\NN{\mathbb{N}}
\def\PP{\mathbb{P}}
\def\RR{\mathbb{R}}
\def\Bcal{\mathcal{B}}
\def\Ccal{\mathcal{C}}
\def\Fcal{\mathcal{F}}
\def\Gcal{\mathcal{G}}
\def\Ncal{\mathcal{N}}
\def\Tcal{\mathcal{T}}
\def\Wcal{\mathcal{W}}
\def\Ycal{\mathcal{Y}}
\def\Jtil{\widetilde{J}}
\def\Ntil{\widetilde{N}}
\newcommand{\eps}{\varepsilon}
\def\E#1{\mathbb{E}\left[ #1 \right]}
\def\ind#1{\mathds{1}_{\{#1\}}}
\def\1{\mathds{1}}
\def\tilde#1{\widetilde{#1}}
\def\hat#1{\widehat{#1}}
\newtheorem{theorem}{Theorem}[section]
\renewcommand{\thetheorem}{%
  \ifnum\c@subsection=0
    \thesection.\number\c@theorem
  \else
    \thesubsection.\number\c@theorem
  \fi}
\newtheorem{cor}[theorem]{Corollary}
\newtheorem{lem}[theorem]{Lemma}
\newtheorem{prop}[theorem]{Proposition}
\newtheorem{definition}[theorem]{Definition}
\newtheorem{assumption}[theorem]{Assumption}
\newtheorem{notation}[theorem]{Notation}
\newtheorem{remark}[theorem]{Remark}
\numberwithin{equation}{section}
\renewcommand{\theequation}{%
  \ifnum\c@subsection=0
    \thesection.\number\c@equation
  \else
    \thesubsection.\number\c@equation
  \fi}
\def\i{\mathbf{i}}
\def\bar#1{\overline{#1}}
\DeclareSymbolFontAlphabet{\mathrsfs}{rsfs}
\DeclareMathOperator*{\esssup}{ess\,sup}
\title{Quantification of limit theorems for Hawkes processes}
\author{Laure Coutin\footnote{UPS, IMT UMR CNRS 5219, Universit\'e de Toulouse, 135 avenue de Rangueil 31077 Toulouse Cedex 4 France. \; Email: \texttt{laure.coutin@math.univ-toulouse.fr}} \and Benjamin Massat\footnote{UPS, IMT UMR CNRS 5219, Universit\'e de Toulouse, 135 avenue de Rangueil 31077 Toulouse Cedex 4 France. \; Email: \texttt{benjamin.massat@math.univ-toulouse.fr}} \and Anthony Réveillac\footnote{INSA de Toulouse, IMT UMR CNRS 5219, Universit\'e de Toulouse, 135 avenue de Rangueil 31077 Toulouse Cedex 4 France. \; Email: \texttt{anthony.reveillac@insa-toulouse.fr}} }
\begin{document}
\maketitle

\begin{abstract}
In this article, we fill a gap in the literature regarding quantitative functional central limit theorems (qfCLT) for Hawkes processes by providing an upper bound for three limit theorems involving Nearly Unstable Hawkes Process (NUHP), Weakly Critical Hawkes Process (WCHP) and Supercritical Nearly Unstable Hawkes Process (SNUHP). Note that, for two of them, no speed of convergence has been established even for one-dimensional marginals; we provide in this paper a control in terms of a supremum norm in $2$-Wasserstein distance. To do so, we make use of the so-called Poisson imbedding representation and provide a qfCLT formulation in terms of a Brownian sheet. Incidentally, we construct an optimal coupling between a rescaled compensated two-parameter Poisson random measure and a Brownian sheet with respect to the $2$-Wasserstein distance and analyze the asymptotic quality of this coupling in detail.
\end{abstract}

\textbf{Keywords:} Nearly unstable Hawkes process, Coupling of L\'evy processes, Cox-Ingersoll-Ross process, Functional limit theorem, Convergence rate\\
\textbf{Mathematics Subject classification (2020):} 60F05, 60F17, 60G51, 60G55


\section{Introduction}
Introduced by Hawkes in \cite{hawkes_spectra_1971}, the Hawkes process $H$ is a self-exciting point process whose intensity, denoted by $\lambda$, is given by  
\begin{equation}
    \lambda_t = \mu + \int_{(0,t)} \phi(t-s) dH_s, \quad t > 0,
\end{equation}
where $\mu > 0$ and $\phi: \RR_+ \to \RR_+$ is a kernel function. While the linear Hawkes process was the first to be studied, a nonlinear version can also be defined. Over the past two decades, the limiting behavior of such processes has been widely investigated.

\subsection{Limits theorems for Hawkes processes}
In 2013, Bacry, Delattre, Hoffmann, and Muzy \cite{bacry_limit_2013} established a law of large numbers and a functional central limit theorem (fCLT) for linear multivariate Hawkes processes observed over a time interval $[0,T]$ as $T \to +\infty$:
\begin{equation*}
    \left(\frac{H_{tT}- \int_0^{tT} \lambda_s ds}{\sqrt{T}}\right)_{t\in[0,1]} \xrightarrow[]{T\to +\infty} \left( \sigma B_t\right)_{t\in [0,1]},
\end{equation*}
where $B_t$ is a Brownian motion. This result was later extended by Zhu \cite{zhu_nonlinear_2013} to nonlinear Hawkes processes.

The first bound on the convergence rate of such processes was established by Torrisi \cite{torrisi_gaussian_2016} for nonlinear Hawkes processes. However, his bound does not tend to zero as $T \to +\infty$.  A major issue is the quantification of functional Central Limit Theorems; this is adresses in \cite{besancon_diffusive_2024} by Besancon, Coutin and Decreusefond using other techniques. Besides, Besançon, Coutin, Decreusefond, and Moyal, in \cite{besancon_diffusive_2024}, quantified convergences to diffuse limits of Markov processes and long-time behavior of Hawkes processes. In this paper, they study the convergence of functionals of a one-dimensional compensated Poisson measure towards a functional of a Brownian motion. For the Hawkes process, they establish the convergence of the linearly interpolated renormalized Hawkes process towards the linearly interpolated Brownian motion.
Between 2018 and 2022, a series of works by Hillairet, Huang, Khabou, Privault, and R\'eveillac (see \cite{hillairet_malliavin-stein_2022}, \cite{khabou_malliavin-stein_2021}, \cite{privault_stein_2018}) analyzed the convergence rate of the linear Hawkes process in 1-Wasserstein distance of one-dimensional marginals, obtaining the bound  
\begin{equation}
\label{eq: maj_intro}
    \Wcal_1 \left( \frac{H_T - \int_0^T \lambda_t dt}{\sqrt{T}}, \Ncal\left(0,\sigma^2 \right)\right) \leq \frac{C}{\sqrt{T}}.
\end{equation}
Their work relies on the Poisson imbedding representation coupled with a taylor-made Malliavin calculus for Hawkes processes completing the approach of \cite{torrisi_gaussian_2016} for this class of processes. Recall that Poisson imbedding consists in representing a counting process as a solution of an SDE driven by a Poisson random measure. We refer to Brémaud and Massoulié \cite{bremaud_stability_1996} for a discussion on Poisson imbedding and to Peccati, Solé, Taqqu, and Utzet \cite{peccati_steins_2010} for Malliavin calculus on the Poisson space.

This methodology is also used in \cite{coutin_normal_2024}, where Coutin, Massat, and Réveillac extend the previous results by obtaining new upper bounds in Wasserstein distance between a functional of point processes and a Gaussian distribution. Their results apply to nonlinear Hawkes processes and discrete linear Hawkes processes (see \cite{kirchner_hawkes_2016} and \cite{quayle_etude_2022} for details on these processes). They also improve the convergence theorem for nonlinear Hawkes processes from \cite{zhu_nonlinear_2013} by relaxing some assumptions.

Furthermore, Horst and Xu \cite{horst_functional_2024} established functional and scaling limit theorems for Hawkes processes under minimal conditions on their kernels using Fourier analysis. In particular, they characterized the limiting behavior of subcritical ($ \left\| \phi \right\|_1 < 1$) and critical ($\left\| \phi \right\|_1 = 1$) Hawkes processes. More recently, Xu \cite{xu_scaling_2024} also studied the scaling limit of multivariate Hawkes processes.

This critical behavior has also been studied by Jaisson and Rosenbaum in \cite{jaisson_limit_2015}. Specifically, the authors investigate the Nearly Unstable Hawkes Process (NUHP), a class of Hawkes processes whose kernel functions depend on $T$ and whose norm, while still less than $1$, approaches the critical value $1$. In particular, the intensity of an NUHP can be written as:  
\begin{equation}
    \lambda^T_t = \mu + \int_{(0,t)} \phi^T(t-s) dH^T_s, \quad t\in [0,T], \quad \text{with } \left\| \phi^T \right\|_1 \xrightarrow[]{T\to +\infty} 1.
\end{equation}
In \cite{jaisson_limit_2015}, Jaisson and Rosenbaum proved that the rescaled intensity $\left(\lambda^T_{tT}/T\right)_{t\in [0,1]}$ converges in law, in the Skorokhod topology, to a CIR process (see Theorem \ref{thm: JR}). While \cite{coutin_normal_2024} provides the first bound in the literature for NUHPs, their work does not establish a convergence rate for these processes.  
More recently, Liu, Xu, and Zhang \cite{liu_scaling_2024}, inspired by \cite{jaisson_limit_2015}, studied kernels whose norm exceeds $1$ but also converges to $1$. This Hawkes process is referred to as Supercritical Nearly Unstable Hawkes Process (SNUHP) by the authors. They prove a convergence theorem, in the Skorokhod topology, for the rescaled counting process $\left(H^T_{tT}/T^2\right)_{t\in [0,1]}$, which converges to the solution of an SDE closely related to the CIR process. The convergence of the rescaled intensity is proved in this article.\\
Since the convergence theorems in \cite{jaisson_limit_2015} and in \cite{liu_scaling_2024} relies on the convergence of a rescaled Poisson measure to a Gaussian measure, we focus on this convergence. By adapting the proof of Fomichov, Gonz\'alez C\'azares, and Ivanovs \cite{fomichov_implementable_2021}, we construct an optimal coupling between a rescaled compensated two-parameter Poisson measure and a two-parameter Gaussian measure. In particular, we develop a comonotonic coupling that allows us to control differences of stochastic integrals. It yields an upper bound on the mean squared maximal distance between paths. Moreover, we also use our methodology to improve the convergence rate for Weakly Critical Hawkes Process (WCHP) presented in \cite{horst_functional_2024}.

\subsection{Our contribution}
As previously mentioned, we focus on three distinct Hawkes process regimes: the Nearly Unstable Hawkes Process (NUHP - $\left\| \phi^T\right\|_1 \nearrow 1$ as $T\to +\infty$), the Weakly Critical Hawkes Process (WCHP - $\left\| \phi^T\right\|_1 = 1$ ), and the Supercritical Nearly Unstable Hawkes Process (SNUHP - $\left\| \phi^T\right\|_1 \searrow 1$ as $T\to +\infty$). For each of these models, we consider the renormalized intensity process defined by
$$\Lambda^T_{\cdot} := \frac{\lambda^T_{T \cdot}}{T},$$
and we denote the corresponding processes by $\Lambda^{T,-}$, $\Lambda^{T,0}$, and $\Lambda^{T,+}$ for the NUHP, WCHP, and SNUHP cases, respectively. Their associated limiting processes are denoted by $X^-$, $X^0$, and $X^+$.\\
When the discussion applies uniformly along the three regimes, we use the general notation $\Lambda^{T,\natural}$ and $X^\natural$, where $\natural \in \{-, 0, +\}$ (see \eqref{eq:Lambda_intro} and \eqref{def: CIR} below for exact formulas). Our contribution will be detailed in two main questions:\\
\begin{center}
    \textit{How can quantification of fCLTs for near-critical Hawkes processes be addressed and in which norm?}
\end{center}
It is important to note that although fCLTs have been established in \cite{jaisson_limit_2015} for the NUHP, in \cite{horst_functional_2024} for the WCHP, and in \cite{liu_scaling_2024} for the SNUHP, only the result in \cite{horst_functional_2024} provides a quantitative convergence rate, specifically for the WCHP but not in the uniform norm (see Remark \ref{rem: horst_xu}). For the NUHP and SNUHP, no such quantitative estimates were previously available. We summarize in Table \ref{tab:int_conv_result} the state of the art and underline our results.
\renewcommand{\arraystretch}{1.5}
\newcolumntype{C}[1]{>{\centering\arraybackslash }m{#1}}
\begin{table}[!ht]
    \centering
    \hspace*{-0.5cm}\begin{tabular}{|C{1.5cm}|C{6cm}|C{7cm}|}
    \cline{2-3} \multicolumn{1}{c|}{}  & $\Lambda^{T,\natural} \xrightarrow[T \to +\infty]{(d)} X^\natural $ & Convergence rate of $\Lambda^{T,\natural}$ to $X^\natural $  \\ \hline
        NUHP \newline$(\natural = -)$ & Theorem 2.2, \cite{jaisson_limit_2015} & \textit{\underline{Theorem \ref{th: NUHP to X}}} \\ \hline
         WCHP \newline$(\natural = 0)$ & Theorem 2.10, \cite{horst_functional_2024} & $\bullet$ Convergence rate in 1-Wasserstein distance: Theorem 2.12, \cite{horst_functional_2024} \newline $\bullet$ \underline{\textit{Convergence rate in $\LL^2\left( \Omega, \LL^\infty([0,1])\right)$:}} \underline{\textit{Theorem \ref{th: NUHP to X} }}\\ \hline
         SNUHP \newline$(\natural = +)$ & $\bullet$ Convergence of the rescaled counting process: Theorem 1.2, \cite{liu_scaling_2024} \newline $\bullet$ \textit{\underline{Convergence of the intensity:}}\newline \textit{\underline{Theorem \ref{th: NUHP to X}}} & \textit{\underline{Theorem \ref{th: NUHP to X}}} \\ \hline
    \end{tabular}
    \caption{State of the art of convergence results regarding the rescaled intensity}
    \label{tab:int_conv_result}
\end{table}

~\\[2ex]
~\\
~\\
\begin{center}
    \textit{How can we provide a unified framework for these critical limit theorems?}
\end{center}
We construct a two-parameter coupling between a compensated Poisson random measure on $[0,1]\times\RR_+$ and a Brownian sheet $W$ on the same space that allows us to provide general bounds for the convergence of stochastic integrals with respect to these measures (see Theorem \ref{thm: maj_sup_int_NTW}). The coupling we produce is strong as it allows to derive functional estimates in the uniform topology (and thus in 2-Wasserstein distance). This unified framework allows us to capture in the three cases NUHP, WCHP and SNUHP.\\
More precisely, a central object in the analysis of Hawkes processes is the function $\Psi^T$, defined as the solution to the Volterra equation associated with the kernel $\phi^T$, that is,
$$\Psi^T = \phi^T + \phi^T * \Psi^T.$$
This function plays a crucial role in rewriting the intensity in a form more suitable for our analysis:
\begin{equation}
\label{eq:Lambda_intro}
\Lambda^{T,\natural}_t = \frac{\mu}{T} + \mu \int_0^t \Psi^{(T),\natural}(t-s) \, ds + \iint_{(0,t)\times \RR_+} \Psi^{(T),\natural}(t-s) \, \1_{\theta \leq \Lambda^{T,\natural}_s} \, \Ntil^T(ds,d\theta), \quad t \in [0,1],
\end{equation}
where $\natural \in \{-, 0, +\}$, $\Psi^{(T),\natural} := \Psi^{T,\natural}(T \cdot)$, and $\Ntil^T$ is a rescaled, compensated two-parameter Poisson random measure defined in Subsection \ref{subs: hawkes proc}.
It is known in the literature that, for each $\natural \in \{-, 0, +\}$, the rescaled function $\Psi^{(T),\natural}$ converges (in an appropriate sense) to a limiting function $\rho^\natural$. This convergence is one of the key components we quantify to establish our convergence rates.
Moreover, each limiting process $X^\natural$ admits a representation involving a Brownian sheet $W$:
\begin{equation}
\label{def: CIR}
X^\natural_t = \mu \int_0^t \rho^\natural(t-s) \, ds + \iint_{(0,t)\times \RR_+} \rho^\natural(t-s) \, \1_{\theta \leq X^\natural_s} \, W(ds,d\theta), \quad t\in [0,1].
\end{equation}
~\\

We proceed as follows. Notations and Hawkes processes properties are presented in Section \ref{sec: notation et preliminaire}. The coupling and its asymptotic properties are introduced in Section \ref{sec: coupling}.  The upper bound for the convergence of the different processes to their limit is presented in Section \ref{sec: main upper bound}. Finally technical lemmata, proof of Propositions \ref{prop: conv_psi_rho_-}, \ref{prop: conv_psi_rho_0}, \ref{prop: conv_psi_rho_+}  and convergence results regarding the kernels are postponed to Section \ref{sec: lemmata}.

\section{Notations and preliminaries}
\label{sec: notation et preliminaire}

\subsection{Notations}
 We denote by $\NN$ (resp. $\NN^*$) the set of non-negative (resp. positive) integers, that is, $\NN := \{0, 1, 2, \dots\}$ (resp. $\NN^* := \{1, 2, \dots\}$). Similarly, we define the sets of non-negative and positive real numbers as $\RR_+ := [0, +\infty)$ and $\RR_+^\ast := (0, +\infty)$, respectively.  

For any $(a,b) \in \RR^2$, we adopt the convention of writing $[a,b]=[\min(a,b), \max(a,b)]$ regardless of the order between $a$ and $b$. Moreover, we denote by $\Bcal(E)$ the Borel $\sigma$-algebra of a given topological space $E$.  

For any $(i,k) \in \NN \times \NN^*$, we introduce the notations  
$$ I_{i,k} := \left(t^k_i,t^k_{i+1} \right], \quad A_{i} := \left(0,t_i^k \right]^2, \quad \text{where} \quad t^k_i := \frac{i}{k}. $$  

Regarding function spaces, for any $A \in \Bcal(\RR)$ and $p \in \NN^*$, we define the Lebesgue space  
$$ \LL^p(A) = \left\{ f:A \to \RR \mid \|f\|_{\LL^p(A)} := \left(\int_A |f(t)|^p dt \right)^{1/p} < +\infty \right\}.$$  
For $p = \infty$, we set the set of a.e. bounded functions
$$ \LL^{\infty}(A) = \left\{ f:A \to \RR \mid \|f\|_{\LL^{\infty}(A)} := \esssup_{t\in A} |f(t)| < +\infty \right\}. $$  
In the specific case $A = \RR_+$, we use the shorthand notation $\LL^p := \LL^p(\RR_+)$ for any $p \in \NN^* \cup \{\infty\}$. \\

Throughout this paper, unless otherwise specified, we assume that $T \geq 2$. We also use $C>0$ to denote a constant that may change from line to line but remains independent of $T$, which is the key parameter we seek to control in our results.  

\subsection{Hawkes processes}
\label{subs: hawkes proc}
We start this section by providing the definition of a linear Hawkes process $H$ with parameter $\mu>0$ and $\phi: \RR_+ \to \RR_+$.
\begin{definition}[Hawkes process, \cite{bremaud_stability_1996}]
\label{def:Hawkes}
Let $(\Omega, \GG := \left( \Gcal_t\right)_{t\geq 0}, \PP)$ be a filtered probability space.
Let $\mu>0$ and $\phi:\RR_+ \to \RR_+$. A linear Hawkes process $H:=(H_t)_{t \geq 0}$ with parameters $\mu$ and $\phi$ is a counting process such that   
\begin{itemize}
\item[(i)] $H_0=0,\quad \PP-a.s.$,
\item[(ii)] its ($\GG$-predictable) intensity process is given by
$$\lambda_t:=\mu + \int_{(0,t)} \phi(t-s) dH_s, \quad t\geq 0,$$
that is for any $0 \leq s \leq t $ and $A \in \Gcal_s$,
$$ \E{\1_A (H_t-H_s)} = \E{\int_{(s,t]} \1_A \lambda_r dr }.$$
\end{itemize}
\end{definition}

The definition used in this paper differs from the standard one. Instead, we adopt an alternative approach by representing the Hawkes process through the solution of a stochastic differential equation (SDE). This representation, introduced in \cite{bremaud_stability_1996}, relies on a two-parameter Poisson measure $N$, as described below.

We begin by defining the space of configurations:
$$ \Omega:=\left\{\omega=\sum_{i=1}^{n} \delta_{(t_{i},\theta_i)} \mid 0\leq t_0 < t_1 < \cdots < t_n, \; \theta_i \in \RR_+, \; n\in \NN \cup\{+\infty\} \right\}.$$
Each path of a counting process is represented as an element $\omega$ in $\Omega$ which is a $\NN$-valued measure on $\RR_+^2$. 

Let $\Fcal^N$ be the $\sigma$-field associated to the vague topology on $\Omega$, and $\PP$ be a probability measure under which the random measure $N$ defined as:
$$ N(B)(\omega):=\omega(B), \quad B \in \Bcal \left(\RR_+^2\right),$$
is a random measure with intensity $1$ (so that $N(B)$ is a Poisson random variable with intensity $\pi(B)$ for any $B\in \Bcal \left(\RR_+^2 \right)$ and where $\pi$ denotes the Lebesgue measure that is $\pi(B)=\iint_{\RR_+^2} \1_B(u,\theta) du d\theta $). We set $\FF^N:=(\Fcal_t^N)_{t\in \RR_+}$ the natural history of $N$, that is $$\Fcal_t^N:=\sigma \left(N( \Tcal  \times B), \; \Tcal \subset \Bcal((0,t]), \; B \in \Bcal(\RR_+) \right).$$ Let also, $\Fcal_\infty^N:=\lim_{t\to+\infty}\Fcal_t^N$ .\\
With this notations on hand, we can define a Hawkes process through an SDE that we present in Theorem \ref{th:HRR} and Corollary \ref{cor: poisson_imbedding} both proved in \cite{coutin_normal_2024}. However, we do not write it exactly as presented in \cite{coutin_normal_2024} since an analysis of the proof gives that $\phi$ positive and locally integrable is a sufficient assumption to get their result
\begin{theorem}
    \label{th:HRR}
    Suppose that $\phi$ is positive and locally integrable. Then, the SDE below admits a unique $\FF^N$-predictable solution $\lambda$: 
    \begin{equation}
    \label{eq:Int_Hawkes}
    \lambda_t = \mu + \iint_{(0,t)\times \RR_+} \phi(t-u)\ind{\theta \leq \lambda_u} N(du,d\theta) ,\quad t \in \RR_+ .
    \end{equation}
\end{theorem}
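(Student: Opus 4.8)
The plan is to construct $\lambda$ by a monotone Picard iteration, using the subcriticality $\left\|\phi\right\|_1<1$ to control expectations through a renewal equation, and then to obtain uniqueness from a Gr\"onwall-type estimate for convolution inequalities. The only probabilistic ingredient is the $\FF^N$-compensation formula, which for a nonnegative $\FF^N$-predictable integrand $g$ reads $\E{\iint_{(0,t)\times\RR_+}g(u,\theta)\,N(du,d\theta)}=\E{\int_0^t\int_{\RR_+}g(u,\theta)\,d\theta\,du}$.

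\textbf{Existence.} Set $\lambda^0_t:=\mu$ and, recursively, $\lambda^{n+1}_t:=\mu+\iint_{(0,t)\times\RR_+}\phi(t-u)\ind{\theta\le\lambda^n_u}\,N(du,d\theta)$. An induction shows that each $\lambda^n$ is $\FF^N$-predictable and integrable (the compensation formula gives $\E{\lambda^{n+1}_t}=\mu+\int_0^t\phi(t-u)\E{\lambda^n_u}\,du$, which is finite since $\E{\lambda^0_t}=\mu$), and that the sequence is nondecreasing: $\lambda^1\ge\mu=\lambda^0$ because the integrand is nonnegative, and $\lambda^n\ge\lambda^{n-1}$ forces $\ind{\theta\le\lambda^n_u}\ge\ind{\theta\le\lambda^{n-1}_u}$, hence $\lambda^{n+1}\ge\lambda^n$. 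Therefore $\lambda_t:=\lim_n\lambda^n_t\in[\mu,+\infty]$ is a well-defined $\FF^N$-predictable process, and by monotone convergence $y_t:=\E{\lambda_t}$ solves the renewal equation $y_t=\mu+(\phi*y)(t)$; its unique locally bounded solution is $y_t=\mu\big(1+\int_0^t\psi(s)\,ds\big)\le\mu/(1-\left\|\phi\right\|_1)<\infty$, where $\psi:=\sum_{k\ge1}\phi^{*k}$ is a finite kernel since $\left\|\phi\right\|_1<1$. In particular $\lambda_t<\infty$ a.s.\ (and, $\phi$ being locally bounded, $\lambda$ is locally bounded a.s., since on $[0,\bar T]$ it is $\mu$ plus finitely many shifts $\phi(t-t_i)\le\sup_{[0,\bar T]}\phi$). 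Finally, the $\FF^N$-predictable set $\{(u,\theta):\theta=\lambda_u\}$ has zero mean $N$-mass (apply the compensation formula to $\ind{\theta=\lambda_u}$, whose $d\theta$-integral vanishes), so off this $N$-negligible set $\phi(t-u)\ind{\theta\le\lambda^n_u}$ converges to $\phi(t-u)\ind{\theta\le\lambda_u}$ $N$-a.e., dominated by the $N$-integrable function $\phi(t-u)\ind{\theta\le\lambda_u}$; dominated convergence for the integral against $N$ then lets us pass to the limit in the recursion and conclude that $\lambda$ solves \eqref{eq:Int_Hawkes}.

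\textbf{Uniqueness and main obstacle.} If $\lambda'$ is any locally bounded $\FF^N$-predictable solution, the same induction with the hypothesis $\lambda^n\le\lambda'$ yields $\lambda\le\lambda'$, so the constructed process is the minimal solution. For the reverse inequality, fix $\bar T>0$, let $\tau_M:=\inf\{t:\lambda_t\vee\lambda'_t\ge M\}$ and $f_M(t):=\E{|\lambda_t-\lambda'_t|\ind{t<\tau_M}}\le 2M$; subtracting the two equations, using $\int_{\RR_+}|\ind{\theta\le a}-\ind{\theta\le b}|\,d\theta=|a-b|$ and the compensation formula, one obtains the convolution inequality $f_M(t)\le\int_0^t\phi(t-u)f_M(u)\,du$ on $[0,\bar T]$, whence $\sup_{[0,\bar T]}f_M\le\left\|\phi\right\|_1^n\sup_{[0,\bar T]}f_M$ for every $n$, forcing $f_M\equiv0$; letting $M\to\infty$ (so $\tau_M\uparrow+\infty$ a.s.) and $\bar T\to\infty$ gives $\lambda=\lambda'$. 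The delicate step is this uniqueness argument — in particular making the truncation rigorous (integrability of the localized quantities, the precise class of admissible solutions, and $\tau_M\uparrow\infty$) and justifying the iterated convolution bound; once the a priori estimate $\E{\lambda_t}\le\mu/(1-\left\|\phi\right\|_1)$ is available, the monotonicity of the Picard scheme makes existence and the passage to the limit routine.
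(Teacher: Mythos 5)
Your proof is correct and follows essentially the same route as the one the paper relies on: the paper does not prove Theorem \ref{th:HRR} itself but cites \cite{coutin_normal_2024}, where existence and uniqueness are obtained exactly by a Picard/thinning iteration controlled through the compensation formula and the subcritical renewal bound $\E{\lambda_t}\leq \mu/(1-\|\phi\|_1)$, with uniqueness from the iterated convolution inequality $f\leq \phi^{\ast n}\ast f$. The only caveat is the one you flag yourself: the Gr\"onwall step establishes uniqueness within the class of a.s.\ locally bounded (or locally integrable in expectation) predictable solutions, which is the standard and intended reading of the statement; your handling of the boundary set $\{\theta=\lambda_u\}$ when passing to the limit is a nice touch that is often left implicit.
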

\begin{cor}
    \label{cor: poisson_imbedding}
    Suppose that the assumptions of Theorem \ref{th:HRR} hold. We denote by $\lambda$ the unique solution of \eqref{eq:Int_Hawkes} and by $H$ his associated counting process, i.e.  
    $$H_t := \iint_{(0,t]\times \RR_+} \1_{\{\theta \leq \lambda_s\}} N(ds,d\theta), \quad t\in \RR_+.$$
    Then, $H$ is a Hawkes process in the sense of Definition \ref{def:Hawkes} whose intensity is $\lambda$.
\end{cor}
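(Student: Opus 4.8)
The plan is to verify, for the filtration $\GG=\FF^N$, the two items of Definition \ref{def:Hawkes} for the pair $(H,\lambda)$, where $\lambda$ is the unique $\FF^N$-predictable solution of \eqref{eq:Int_Hawkes} provided by Theorem \ref{th:HRR}. Item (i) is immediate: $H_0$ is a stochastic integral over $(0,0]\times\RR_+=\emptyset$, hence $H_0=0$ $\PP$-a.s. One also records that $H$ is a bona fide counting process: it is $\FF^N$-adapted by construction, and its paths are non-decreasing, $\NN$-valued and right-continuous; non-explosion in finite time will follow from the moment bound $\E{H_t}<\infty$ obtained below.

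The core step is the intensity identity in item (ii). First I would use $\lambda_s\geq\mu>0$ together with Fubini to write $\int_0^t\lambda_s\,ds=\iint_{(0,t]\times\RR_+}\ind{\theta\leq\lambda_s}\,ds\,d\theta$, so that
\begin{equation*}
M_t:=H_t-\int_0^t\lambda_s\,ds=\iint_{(0,t]\times\RR_+}\ind{\theta\leq\lambda_s}\left(N(ds,d\theta)-ds\,d\theta\right),\qquad t\in\RR_+.
\end{equation*}
Since $\lambda$ is $\FF^N$-predictable, the integrand $(s,\theta)\mapsto\ind{\theta\leq\lambda_s}$ is predictable, and — granting the integrability $\E{\int_0^t\lambda_s\,ds}<\infty$ — the compensation (Mecke–Campbell) formula for the Poisson random measure $N$ with intensity $\pi$ shows that $(M_t)_{t\geq0}$ is an $\FF^N$-martingale. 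Hence for every $0\leq s\leq t$ and $A\in\Fcal_s^N$ we get $\E{\1_A M_t}=\E{\1_A M_s}$, which rearranges into
\begin{equation*}
\E{\1_A(H_t-H_s)}=\E{\int_{(s,t]}\1_A\lambda_r\,dr},
\end{equation*}
i.e.\ $\lambda$ is the $\FF^N$-intensity of $H$.

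To close the argument one must justify the integrability $\E{\int_0^t\lambda_s\,ds}<\infty$. Taking expectations in \eqref{eq:Int_Hawkes} and applying the compensation formula, $g(t):=\E{\lambda_t}$ solves the renewal equation $g(t)=\mu+\int_0^t\phi(t-u)g(u)\,du$; because $\|\phi\|_1<1$, its solution is $g=\mu+\mu\,\psi$ with $\psi:=\sum_{n\geq1}\phi^{*n}\in\LL^1(\RR_+)$, which is in particular locally bounded, so $\E{\int_0^t\lambda_s\,ds}=\int_0^tg(s)\,ds<\infty$ for every $t$; this simultaneously yields $\E{H_t}<\infty$ and hence non-explosion of $H$.

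The main obstacle is the martingale step: one has to invoke the compensation formula for the stochastic integral of the predictable integrand $\ind{\theta\leq\lambda_s}$ against $N-\pi$ and verify the attendant integrability carefully, so that $M$ is a genuine martingale rather than only a local one; the rest is bookkeeping. As this is already carried out in \cite{coutin_normal_2024}, I would either cite that reference or reproduce this short computation.
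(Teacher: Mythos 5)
Your proposal is correct and is essentially the standard argument; the paper itself offers no proof of this corollary, delegating it (together with Theorem \ref{th:HRR}) to \cite{coutin_normal_2024}, and what you write is precisely the verification carried out there: item (i) is trivial, item (ii) follows from the martingale property of the compensated integral of the predictable integrand $\ind{\theta\leq\lambda_s}$, and integrability comes from the renewal equation. Two small points to tidy up. First, the solution of $g=\mu+\phi\ast g$ is $g(t)=\mu+\mu\int_0^t\psi(s)\,ds$ with $\psi=\sum_{n\geq1}\phi^{\ast n}$, not $g=\mu+\mu\psi$ pointwise; as written your justification "$\psi\in\LL^1$, which is in particular locally bounded" is false ($\LL^1$ does not imply local boundedness), but the correct formula gives directly $g\leq\mu\bigl(1+\|\phi\|_1/(1-\|\phi\|_1)\bigr)$, so the conclusion $\E{\int_0^t\lambda_s\,ds}<\infty$ stands. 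Second, deriving the renewal equation by "taking expectations and applying the compensation formula" presupposes the very integrability you are trying to establish; this circularity is resolved in the reference by obtaining $\lambda$ as the increasing limit of a Picard iteration (or by localization), for which the renewal bound holds uniformly at each stage and passes to the limit by monotone convergence. You flag this as the main obstacle without carrying it out, which is acceptable given that you cite \cite{coutin_normal_2024}, but it is the one place where the argument is not yet self-contained.
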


In this paper, the norm of the kernel is supposed to converge to $1$ with a parameter $T$. To do so we introduce a parameter $a_T$ such that $\lim_{T\to +\infty }a_T =1$. We also introduce a sequence of Hawkes processes indexed by $T$, denoted $\left(H^T_t \right)_{t\geq 0}$ with the following intensity:
\begin{equation}
    \label{eq: int_nuhp}
    \lambda^T_t = \mu + \int_{(0,t)} \phi^T(t-s) dH^T_s, \quad t\in [0,T]
\end{equation}
where $\mu>0$ and $\phi^T$ is a nonnegative measurable function on $\RR_+$ which satisfies $\left\| \phi^T \right\|_1 \to 1$.

\begin{assumption}
\label{assump: phi}
    For $t\in \RR_+$, $\phi^T(t) = a_T \phi(t)$ with $\lim\limits_{T\to +\infty}a_T = 1$ and $\phi$ is a nonnegative measurable function such that 
    $$\left\| \phi \right\|_1 = 1, \quad m:= \int_0^{\infty} s\phi(s) ds < +\infty, \quad \text{and}\quad m_2:= \int_0^{\infty} s^2\phi(s) ds < +\infty.$$
    Moreover, we suppose that $\phi$ is differentiable with derivative $\phi'$ such that $\left\| \phi'\right\|_{\infty}<+\infty$ and $\left\| \phi'\right\|_{1}<+\infty$.
\end{assumption}

\begin{remark}
    Note that under Assumption \ref{assump: phi}, we also have that $\phi \in \LL^{\infty}(\RR_+)$ which yields $\phi \in \LL^2(\RR_+)$.
\end{remark}

We also set the functions $\Psi^T$ and $\Psi^{(T)}$, both defined on $\RR_+$, by 
\begin{equation}
\label{eq:def_psiT}
    \Psi^T := \sum_{k\geq 1} \left(\phi^T\right)^{\ast k} = \sum_{k\geq 1} (a_T)^k \left(\phi\right)^{\ast k} \quad \text{and} \quad \Psi^{(T)}= \Psi^T(T \cdot)
\end{equation}
where $\left(\phi^T\right)^{\ast 1} = \phi^T$ and for any $k\geq 2$, $\left(\phi^T\right)^{\ast (k-1)} = \left(\phi^T\right)^{\ast k} \ast \phi^T$. It is important to note that existence and uniqueness of $\Psi^T$ are due to the fact that $\phi$ is locally integrable (see Theoreom 1.1 of \cite{grossman_notes_1979} for instance). We are now able to write an SDE on the intensity using the two-parameter Poisson measure and $\Psi^T$.
\begin{prop}[Proposition 2.1 in \cite{jaisson_limit_2015}]
    For all $t\geq 0$, we have
    $$\lambda^T_t = \mu + \mu\int_0^t \Psi^T(t-s) ds + \iint_{(0,t)\times \RR_+} \Psi^T(t-s) \1_{\theta \leq \lambda^T_s} \Ntil(ds,d\theta).$$
\end{prop}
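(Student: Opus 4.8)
The plan is to view the intensity equation for $\lambda^T$ as a \emph{linear stochastic Volterra equation} with deterministic kernel $\phi^T$, and to solve it explicitly using the fact that $\Psi^T$ is the convolution resolvent of $\phi^T$. First I would rewrite the dynamics in terms of the compensated Poisson measure. Since $\phi^T$ is locally bounded with $\|\phi^T\|_1 = a_T < 1$ and finite first moment, it is admissible for Theorem \ref{th:HRR}, so $\lambda^T$ is the unique $\FF^N$-predictable solution of \eqref{eq:Int_Hawkes} with $\phi$ replaced by $\phi^T$. Writing $Z^T_t := \iint_{(0,t]\times\RR_+}\1_{\{\theta\le\lambda^T_s\}}\Ntil(ds,d\theta) = H^T_t - \int_0^t\lambda^T_s\,ds$, which is an $\FF^N$-martingale, and decomposing $N = \Ntil + \pi$, the contribution of $\pi$ on $\{\theta\le\lambda^T_s\}$ equals $\lambda^T_s\,ds$, so that
\[
\lambda^T_t = \mu + \int_0^t \phi^T(t-s)\,\lambda^T_s\,ds + \iint_{(0,t)\times\RR_+}\phi^T(t-s)\1_{\{\theta\le\lambda^T_s\}}\Ntil(ds,d\theta), \qquad t\ge 0.
\]
The splitting of $N$ is licit because $\E{\lambda^T_t}<\infty$ for every $t$, a standard consequence of $\|\phi^T\|_1<1$ (the map $t\mapsto\E{\lambda^T_t}$ solves a subcritical renewal equation).

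Next I would introduce the candidate
\[
g_t := \mu + \mu\int_0^t\Psi^T(t-s)\,ds + \iint_{(0,t)\times\RR_+}\Psi^T(t-s)\1_{\{\theta\le\lambda^T_s\}}\Ntil(ds,d\theta)
\]
and verify that $g$ solves the same Volterra equation. Plugging $g$ into $\mu + \int_0^t\phi^T(t-s)\,g_s\,ds + \iint_{(0,t)\times\RR_+}\phi^T(t-s)\1_{\{\theta\le\lambda^T_s\}}\Ntil(ds,d\theta)$, one exchanges the outer integral $\int_0^t\phi^T(t-u)\,du$ with the inner deterministic integral (Fubini) and with the inner stochastic integral against $\Ntil$ (stochastic Fubini); in both cases the inner kernel becomes $\int_s^t\phi^T(t-u)\Psi^T(u-s)\,du = (\phi^T\ast\Psi^T)(t-s)$. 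Invoking the \emph{resolvent identity} $\phi^T + \phi^T\ast\Psi^T = \Psi^T$, which is immediate from $\Psi^T = \sum_{k\ge1}(\phi^T)^{\ast k}$ together with the absolute $\LL^1$-convergence of this series (as $\sum_{k\ge1}\|\phi^T\|_1^k = a_T/(1-a_T)<\infty$), the expression collapses back to $g_t$. Hence $g$ is an $\FF^N$-predictable solution of the equation, and by the uniqueness part of Theorem \ref{th:HRR} we get $g=\lambda^T$, which is exactly the claimed identity. Equivalently — and this is the route I would favour when writing — one obtains $g$ by iterating the Volterra equation: the deterministic iterates sum to $\mu + \mu\int_0^t\Psi^T(t-s)\,ds$ and the stochastic iterates to $\iint_{(0,t)\times\RR_+}\Psi^T(t-s)\1_{\{\theta\le\lambda^T_s\}}\Ntil(ds,d\theta)$, convergence of both series being controlled by $\sum_k\|(\phi^T)^{\ast k}\|_{\LL^1(0,t)}<\infty$.

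I expect the only genuine difficulty to be technical rather than conceptual, namely rigorously justifying the stochastic Fubini interchange (or, in the iterative version, the $L^2$-convergence of the series of stochastic integrals against $\Ntil$), which requires verifying the measurability and square-integrability hypotheses for the integrand $\Psi^T(t-s)\1_{\{\theta\le\lambda^T_s\}}$. All of this ultimately reduces to the a priori bound $\E{\lambda^T_t}\le \mu + \mu\int_0^t\Psi^T(t-s)\,ds<\infty$ and to $\Psi^T\in\LL^1_{\mathrm{loc}}$, both of which follow from Assumption \ref{assump: phi} and $a_T<1$.
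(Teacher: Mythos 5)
Your proof is essentially the standard resolvent/iteration argument, which is the route used in Jaisson--Rosenbaum; the present paper simply cites their Proposition~2.1 without reproving it, so there is no in-paper proof to compare against, but what you write is the expected derivation. One small imprecision is worth flagging: you invoke ``the uniqueness part of Theorem~\ref{th:HRR}'' to conclude $g=\lambda^T$, but that theorem asserts uniqueness for the \emph{nonlinear} SDE \eqref{eq:Int_Hawkes}, where the indicator $\1_{\{\theta\le\lambda_u\}}$ depends on the unknown. In your verification the indicator is frozen at the given $\lambda^T$ in both $g$ and the equation it is plugged into, so the martingale term is a fixed forcing process and what you actually show is that $\lambda^T$ and $g$ solve the same \emph{linear} Volterra equation
$x_t = h_t + \int_0^t\phi^T(t-s)\,x_s\,ds$ with
$h_t = \mu + \iint_{(0,t)\times\RR_+}\phi^T(t-s)\1_{\{\theta\le\lambda^T_s\}}\Ntil(ds,d\theta)$.
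The uniqueness to cite is therefore for this linear equation, and it is immediate from $\|\phi^T\|_1=a_T<1$: the difference $\delta:=g-\lambda^T$ satisfies $\delta_t=\int_0^t\phi^T(t-s)\delta_s\,ds$, so $\|\delta\|_{\LL^1(0,t)}\le a_T\|\delta\|_{\LL^1(0,t)}$ forces $\delta\equiv0$ once the a priori integrability you establish is in place. Your alternative iterative route avoids invoking uniqueness at all and is the cleaner version to write up; the resolvent identity $\phi^T+\phi^T\ast\Psi^T=\Psi^T$, the bound $\E{\lambda^T_t}<\infty$, and the stochastic Fubini step are all correctly identified as the ingredients that need justification.
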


In the following, we slightly modify the measure in the SDE solved by $\lambda^T$. Indeed, in \cite{jaisson_limit_2015} and \cite{liu_scaling_2024}, the convergence relies on the convergence of a rescaled compensated Poisson measure to a Gaussian measure. Thus, we define another two-parameter random measure called $N^T$. For any $(a,b,c,d)\in \RR_+^4$, we define the signed (non-integer valued) random measure $N^T$ and its compensated version $\Ntil^T$ as follows:
\begin{align*}
    N^T\left( [a,b]\times [c,d]\right) &= \frac{1}{T} N\left( [aT,bT]\times [cT,dT]\right);\\
    \Ntil^T\left( [a,b]\times [c,d]\right) &=N^T\left( [a,b]\times [c,d]\right) -T(b-a)(d-c).
\end{align*}
We make use of the following notations $\FF^T:=(\Fcal^N_{tT})_{t\geq 0}$ and $N^T(s,t) := N^T\left( [0,s]\times [0,t]\right)$ for any $(s,t)\in \RR_+^2$.
\begin{lem}\label{lem-outil-tilde-N-T}
\begin{itemize}
    \item[(i)] Let $(u_{(s,\theta)})_{(s,\theta)\in \RR_+^2}$ be a $\FF^T$ predictable process 
    such that
    \begin{align*}
        \E{ \int_0^1 \int_{\RR_+} u_{(s,\theta)}^2 d\theta ds + \left|\int_0^1 \int_{\RR_+} |u_{(s,\theta)}| d\theta ds \right|^2 } <+\infty.
    \end{align*}
    Then, the process $(\iint_{(0,t] \times \RR_+} u_{(s,\theta)}\Ntil^T(ds,d\theta) )_{t\in [0,1]}$ is a right-continuous square integrable martingale with bracket
    $$ \left[\iint_{(0,\cdot] \times \RR_+}u_{(s,\theta)}\Ntil^T(ds,d\theta)\right]_t=\frac{1}{T}\iint_{(0,t] \times \RR_+}u_{(s,\theta)}^2 N^T(ds,d\theta), \quad t\in [0,1], .$$
    \item[(ii)] For $u$ fulfilling, in addition of assumption of (i),
   $$ \E{  \int_0^1 \int_{\RR_+} u_{(s,\theta)}^4 d\theta ds + \left(\int_0^1 \int_{\RR_+}  u_{(s,\theta)}^2 d\theta ds\right) ^{2} }<+\infty,$$
   there exists a positive constant $C$ such that
   \begin{align*}
       \hspace*{-1.5cm}\E{\sup_{t \in [0,1]}\left[ \iint_{(0,t]\times \RR_+}u_{(s,\theta)} \Ntil^T(ds,d\theta)\right] ^{4}} \leq& C\E{ \left|\int_0^1 \int_{\RR_+} u_{(s,\theta)}^2 d\theta ds \right|^2 }+ \frac{C}{T^2}\E{\int_0^1 \int_{\RR_+} u_{(s,\theta)}^4 d\theta ds}.
   \end{align*}
\end{itemize}
\end{lem}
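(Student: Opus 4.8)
The plan is to deduce everything from the classical theory of stochastic integration against compensated Poisson random measures, using that $N^T$ is merely the standard Poisson measure $N$ suitably rescaled. Concretely, for a measurable integrand $g$ supported on $(0,1]\times\RR_+$ one has the change-of-variables identity
\[\iint_{(0,1]\times\RR_+} g_{(s,\theta)}\, N^T(ds,d\theta) = \frac{1}{T}\iint_{(0,T]\times\RR_+} g_{(s/T,\theta/T)}\, N(ds,d\theta),\]
and likewise with $\Ntil^T,\Ntil$ and with the respective compensators. Since $N$ has $\FF^N$-compensator $ds\,d\theta$, this shows that $N^T$ has $\FF^T$-compensator $T\,ds\,d\theta$, while each atom of $N^T$ carries mass $1/T$; these two scalings cancel in the $L^2$ isometry but survive elsewhere, which is exactly what produces the powers of $T$ in the statement. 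Throughout set $M^T_t:=\iint_{(0,t]\times\RR_+} u_{(s,\theta)}\,\Ntil^T(ds,d\theta)$.

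For (i): the $L^1$-type part of the hypothesis gives $\int_0^1\int_{\RR_+} |u_{(s,\theta)}|\, d\theta\, ds<\infty$ almost surely, hence $\iint_{(0,t]\times\RR_+}|u_{(s,\theta)}|\, N^T(ds,d\theta)<\infty$ a.s.\ (its expectation equals $T\,\E{\int_0^1\int_{\RR_+} |u_{(s,\theta)}|\, d\theta\, ds}<\infty$), so $M^T_t=\iint_{(0,t]\times\RR_+} u\, N^T(ds,d\theta) - T\int_0^t\int_{\RR_+} u_{(s,\theta)}\, d\theta\, ds$ is well defined pathwise as the sum of an absolutely convergent atomic part and an absolutely continuous part; in particular $M^T$ is right-continuous. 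Predictability of $u$ and the defining property of the compensator give $\E{\1_A(M^T_t-M^T_s)}=0$ for $s\le t$ and $A\in\Fcal^N_{sT}$, so $M^T$ is an $\FF^T$-martingale; the $L^2$-hypothesis together with the isometry $\E{(M^T_t)^2}=\E{\int_0^t\int_{\RR_+} u^2_{(s,\theta)}\, d\theta\, ds}$ (transported from $N$ by the identity above) makes it square-integrable. Since the compensator part is continuous, $M^T$ is a pure-jump martingale whose jumps sit exactly at the atoms of $N^T$, with $\Delta M^T_s=u_{(s,\theta)}/T$ at an atom $(s,\theta)$; summing squared jumps and using that $N^T$ puts mass $1/T$ at each atom yields $[M^T]_t=\sum_{s\le t}(\Delta M^T_s)^2=\tfrac1T\iint_{(0,t]\times\RR_+} u^2_{(s,\theta)}\, N^T(ds,d\theta)$, as claimed.

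For (ii): note that the additional moment hypothesis is precisely the hypothesis of (i) applied to the predictable integrand $u^2$; hence $t\mapsto\iint_{(0,t]\times\RR_+} u^2\,\Ntil^T$ is a square-integrable martingale and $\E{(\iint_{(0,1]\times\RR_+} u^2\,\Ntil^T)^2}=\E{\int_0^1\int_{\RR_+} u^4_{(s,\theta)}\, d\theta\, ds}$. Applying the Burkholder--Davis--Gundy inequality with exponent $4$ (whose constant is universal, hence independent of $T$) gives $\E{\sup_{t\in[0,1]}(M^T_t)^4}\le C\,\E{[M^T]_1^2}$. Writing $N^T(ds,d\theta)=\Ntil^T(ds,d\theta)+T\,ds\,d\theta$ in the bracket formula from (i) yields $[M^T]_1=\tfrac1T\iint_{(0,1]\times\RR_+} u^2\,\Ntil^T+\int_0^1\int_{\RR_+} u^2_{(s,\theta)}\, d\theta\, ds$, so $[M^T]_1^2\le 2(\int_0^1\int_{\RR_+} u^2)^2+\tfrac{2}{T^2}(\iint_{(0,1]\times\RR_+} u^2\,\Ntil^T)^2$; taking expectations and inserting the isometry above on the last term gives $\E{\sup_{t\in[0,1]}(M^T_t)^4}\le 2C\,\E{(\int_0^1\int_{\RR_+} u^2_{(s,\theta)}\, d\theta\, ds)^2}+\tfrac{2C}{T^2}\,\E{\int_0^1\int_{\RR_+} u^4_{(s,\theta)}\, d\theta\, ds}$, which is the announced bound.

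There is no genuine obstacle here, only careful bookkeeping: the delicate point is to track the two competing $T$-scalings (mass $1/T$ per atom versus compensator $T\,ds\,d\theta$), so that the $L^2$ isometry for $\iint\cdot\,\Ntil^T$ carries no net power of $T$ while the $1/T$ prefactor in the bracket is responsible for the $1/T^2$ gain in (ii); and to notice that the strengthened hypothesis in (ii) is tailored exactly so that (i) can be re-run with $u^2$ in the role of $u$. Should one wish to avoid quoting Burkholder--Davis--Gundy, the same estimate follows from Doob's $L^2$-inequality applied to the submartingale $(M^T)^2$ together with the bound $\E{(M^T_1)^4}\le C\,\E{[M^T]_1^2}$, itself obtained by applying the Itô formula to $(M^T)^4$ and absorbing the cross term via Young's and Doob's inequalities.
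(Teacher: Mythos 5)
Your proof is correct and follows exactly the route the paper intends: the paper's entire proof is the one-line observation that $TN^T$ is a Poisson random measure of intensity $T^2\,ds\,d\theta$, and your argument is precisely the careful unpacking of that fact (compensator $T\,ds\,d\theta$, atom mass $1/T$, the resulting isometry and bracket, then BDG at exponent $4$ applied to $[M^T]_1$ decomposed via $N^T=\Ntil^T+T\,ds\,d\theta$). The bookkeeping of the two competing $T$-scalings and the observation that the extra hypothesis in (ii) is just (i) applied to $u^2$ are both handled correctly.
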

\begin{proof}
    The proof relies on the fact that $TN^T$ is a Poisson measure with intensity $T^2 dsd\theta.$
\end{proof}
 
Hence, if we define by $\Lambda^T$ the time-and-space rescaled intensity of Hawkes processes, we can use $N^T$ and write $\Lambda^T$ as the solution of this SDE:
\begin{equation}
\label{def: Lambda^T}
    \Lambda^T_t := \frac{\lambda^T_{tT}}{T} = \frac{\mu}{T} + \mu\int_0^t \Psi^{(T)}(t-s) ds + \iint_{(0,t)\times \RR_+} \Psi^{(T)}(t-s) \1_{\theta \leq \Lambda^T_s} \Ntil^T(ds,d\theta), \quad t\in [0,1].
\end{equation}
where $\Psi^{(T)}$ is defined in \eqref{eq:def_psiT}. Note that $\Lambda^T$ is a càg-làd semi-martingale because of Assumption \ref{assump: phi}. Indeed, since $\phi$ (and so $\Psi^{(T)}$) is differentiable, we can use Taylor expansion formula and Fubini's theorem to get:
\begin{align*}
    \Lambda^T_t &= \frac{\mu}{T} + \mu\int_0^t \Psi^{(T)}(s) ds + \int_0^t \left(\Psi^{(T)}\right)' (u)\left[\iint_{(0,u)\times \RR_+}  \1_{\theta \leq \Lambda^T_s} \Ntil^T(ds,d\theta) \right]du \\
    &+ \Psi^{(T)}(0)\iint_{(0,t)\times \RR_+} \1_{\theta \leq \Lambda^T_s} \Ntil^T(ds,d\theta).
\end{align*}

We will now discuss different Hawkes processes regarding the sign of $1-a_T$. We then introduce the following notations:
\begin{notation}
\label{not: diff_cas}
    \begin{itemize}
        \item When $a_T < 1$, we make use of "$-$" as an exponent for all processes that are linked to this case.
        \item When $a_T > 1$, we make use of "$+$" as an exponent for all processes that are linked to this case.
        \item When $a_T = 1$, we may use of "$0$" as an exponent for all processes that are linked to this case.
    \end{itemize}
    We summarize these notations in the Table \ref{tab: notation}:  
    \renewcommand{\arraystretch}{1.5}
    \begin{table}[!ht]
        \centering
        \begin{tabular}{|c|c|c|c|c|c|c|c|}
            \cline{2-8} \multicolumn{1}{c|}{} & $a_T$ & $\phi^T$ & $\phi^{(T)}$ & $\Psi^{T}$ & $\Psi^{(T)}$ & $\Lambda^{T}$ & $H^{T}$\\  \hline 
             NUHP $(\natural = -) $& $a^{-}_T<1$ & $\phi^{T,-}$ & $\phi^{(T),-}$ & $\Psi^{T,-}$ & $\Psi^{(T),-}$ & $\Lambda^{T,-}$ & $H^{T,-}$ \\ \hline 
            WCHP $(\natural = 0) $& $a^0_T=1$ & $\phi$ & $\phi^{(T),0}$ & $\Psi$ & $\Psi^{(T),0}$ & $\Lambda^{T,0}$ & $H^{T,0}$ \\ \hline 
             SNUHP $(\natural = +) $ & $a^{+}_T>1$ & $\phi^{T,+}$ & $\phi^{(T),+}$ & $\Psi^{T,+}$ & $\Psi^{(T),+}$ & $\Lambda^{T,+}$ & $H^{T,+}$ \\  \hline
        \end{tabular}
        \caption{Notations}
        \label{tab: notation}
    \end{table}
\end{notation}

Note that we will employ these different notations when required by the context. However, for the sake of simplicity, we will primarily use the general notations introduced earlier, such as $a^{\natural}_T$, $\Lambda^{T,\natural}$, $\Psi^{T,\natural}$, etc., with $\natural \in \{ -,0,+\}$, when the result can be applied to any Hawkes processes (NUHP, WCHP, or SNUHP).

\paragraph{ \textbf{\texorpdfstring{$a_T <1$ : Nearly Unstable Hawkes Process (NUHP)}{} }}~\\
The first case that we discuss in this paper is about the NUHP. We define such process by taking $a_T<1$. There were originally presented in \cite{jaisson_limit_2015}. In particular, the authors discuss the convergence rate of $(a_T)_{T\geq 2}$ to $1$ and explain the importance to have $\lim\limits_{T\to +\infty}T(1-a_T)=1$. They explain that the behavior of a rescaled Hawkes process is nondegenerate (neither explosive, nor deterministic) with such $(a_T)_{T\geq 2}$. Therefore we now suppose that
$$a_T^{-} = 1- \frac{1}{T}.$$
With such assumption, \cite{jaisson_limit_2015} proved the following convergence:
\begin{theorem}[Theorem 2.2 in \cite{jaisson_limit_2015}]
    \label{thm: JR}
    Under Assumption \ref{assump: phi} and assuming that $\sup_{T>1}\left\| \Psi^{T} \right\|_{\infty} <+\infty$, the sequence of renormalized Hawkes intensities $\left( \Lambda^{T,-}_t \right)_{t\in [0,1]}$ defined in \eqref{def: Lambda^T} converges in law, for the Skorohod topology, toward the law of the unique strong solution of the following Cox–Ingersoll–Ross SDE on $[0, 1]$:
    $$X^{-}_t = \frac{1}{m}\int_0^t (\mu - X^{-}_s) ds + \frac{1}{m}\int_0^t \sqrt{X^{-}_s} dB_s$$
    where $B$ stands for the standard Brownian motion and $m= \int_0^{\infty} s \phi(s) ds$.
\end{theorem}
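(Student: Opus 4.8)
The natural route, and the one I would follow, is that of Jaisson and Rosenbaum \cite{jaisson_limit_2015}: isolate the deterministic input, namely the asymptotics of the rescaled resolvent kernel $\Psi^{(T)}$; prove tightness of $(\Lambda^T)_T$ by moment estimates; and identify every subsequential limit as the unique CIR process by passing to the limit in the Poisson imbedding representation \eqref{def: Lambda^T}. For the kernel, using Assumption \ref{assump: phi} together with the hypothesis $\sup_{T}\left\| \Psi^{T} \right\|_{\infty}<+\infty$, I would prove
$$\Psi^{(T)}\xrightarrow[]{T\to +\infty} g\quad\text{in }\LL^1\text{ and in }\LL^2,\qquad g(t):=\tfrac1m e^{-t/m}.$$
This is a Tauberian/renewal statement: the Laplace transform of $\Psi^T$ is $\widehat{\phi^T}/(1-\widehat{\phi^T})$ with $\widehat{\phi^T}=a_T\widehat\phi$ and $\widehat\phi(z)=1-mz+o(z)$, so $\widehat{\Psi^{(T)}}(z)=\tfrac1T\widehat{\Psi^T}(z/T)\to(1+mz)^{-1}=\widehat g(z)$, and the moment conditions on $\phi$ and the bound on $\left\| \Psi^{T} \right\|_{\infty}$ upgrade this to $\LL^p$ convergence. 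The point I would be most careful about is that $\Psi^{(T)}(0)=a_T\phi(0)$ does \emph{not} converge to $g(0)=1/m$: near the origin $\Psi^{(T)}$ carries a ``boundary layer'' --- the $k=1$ term $a_T\phi(T\cdot)$, a bump of height $O(1)$ but of $\LL^1$- and $\LL^2$-norm $o(1)$ --- that vanishes in $\LL^p$ but not pointwise. Hence one must only ever use $\LL^p$-convergence of $\Psi^{(T)}$, and this is also why Assumption \ref{assump: phi} asks for $\phi'\in\LL^1\cap\LL^\infty$, which is what controls the $\LL^2$-modulus of continuity of the ``bulk'' $\Psi^{(T)}-a_T\phi(T\cdot)$ uniformly in $T$.

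Next I would set up the probabilistic side. From \eqref{def: Lambda^T},
$$\Lambda^T_t=\frac\mu T+\mu\int_0^t\Psi^{(T)}(t-s)\,ds+\int_{(0,t)}\Psi^{(T)}(t-s)\,dY^T_s,\qquad Y^T_t:=\iint_{(0,t)\times\RR_+}\1_{\theta\le\Lambda^T_s}\,\Ntil^T(ds,d\theta),$$
where, by Lemma \ref{lem-outil-tilde-N-T}, $Y^T$ is a square-integrable martingale with predictable bracket $\int_0^t\Lambda^T_s\,ds$ and jumps bounded by $1/T$. Using Lemma \ref{lem-outil-tilde-N-T}(ii), the uniform bound on $\left\| \Psi^{(T)} \right\|_{\infty}=\left\|\Psi^T\right\|_{\infty}$ and a Grönwall argument, I would get $\sup_T\EE[\sup_{t\le1}(\Lambda^T_t)^p]<+\infty$ for $p\in\{2,4\}$. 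Tightness of $(\Lambda^T)_T$ in the Skorokhod space $D([0,1])$ then follows, e.g. via Aldous' criterion: the finite-variation part has derivative $\mu\Psi^{(T)}(t)$ bounded uniformly in $T$, hence is uniformly Lipschitz; and for the martingale convolution $N^T_t:=\int_{(0,t)}\Psi^{(T)}(t-s)dY^T_s$ one controls the second moment of an increment by its bracket, bounding it by $C\,\delta\,\left\| \Psi^{(T)} \right\|_{\infty}^2\,\EE[\sup_{t\le1}\Lambda^T_t]+C\,\EE[\sup_{t\le1}\Lambda^T_t]\,\left\|\Psi^{(T)}(\cdot+\delta)-\Psi^{(T)}\right\|_{\LL^2}^2$, and the last factor is split, exactly as above, into twice the vanishing $\LL^2$-norm of the boundary layer plus the $\LL^2$-modulus of continuity of the bulk, so that $\lim_{\delta\to0}\limsup_{T}$ of this quantity vanishes. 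One records along the way the joint tightness of $(\Lambda^T,Y^T)_T$.

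To identify the limit, let $(\Lambda,Y)$ be a subsequential weak limit of $(\Lambda^T,Y^T)$, realized almost surely on one space by the Skorokhod representation theorem. Since $\sup_t|\Delta Y^T_t|\le 1/T\to0$, the limit $Y$ is a \emph{continuous} martingale; by the moment bounds and uniform integrability, $\langle Y\rangle_t=\int_0^t\Lambda_s\,ds$, and since $\Lambda\ge0$ (it is a limit of nonnegative rescaled intensities) the martingale representation theorem gives $Y_t=\int_0^t\sqrt{\Lambda_s}\,dB_s$ for a Brownian motion $B$. Passing to the limit in the displayed representation --- using $\Psi^{(T)}\to g$ in $\LL^1\cap\LL^2$ together with the moment bounds, and reducing the stochastic convolution to Lebesgue--Stieltjes integrals against the (a.s. convergent) paths $Y^T$ by approximating $g$ and $\Psi^{(T)}$ by step functions and integrating by parts --- one obtains
$$\Lambda_t=\mu\int_0^t g(t-s)\,ds+\int_0^t g(t-s)\,dY_s=\mu\bigl(1-e^{-t/m}\bigr)+\tfrac1m\int_0^t e^{-(t-s)/m}\sqrt{\Lambda_s}\,dB_s.$$
Finally, since $g'=-\tfrac1m g$ and $g(0)=\tfrac1m$, the process $I_t:=\int_0^t g(t-s)\,dY_s$ satisfies $dI_t=-\tfrac1m I_t\,dt+\tfrac1m\,dY_t$; substituting $I_t=\Lambda_t-\mu(1-e^{-t/m})$ gives $d\Lambda_t=\tfrac1m(\mu-\Lambda_t)\,dt+\tfrac1m\sqrt{\Lambda_t}\,dB_t$ with $\Lambda_0=0$, i.e. $\Lambda$ solves the CIR SDE of the statement. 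Pathwise uniqueness for that SDE (the Yamada--Watanabe condition applies, $x\mapsto\sqrt x$ being $\tfrac12$-Hölder and the drift Lipschitz) forces the limit to be unique, so the whole family $(\Lambda^T)_T$ converges in law to it.

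The step I expect to be the main obstacle is the passage to the limit in the stochastic convolution $\int_{(0,t)}\Psi^{(T)}(t-s)\,dY^T_s$: weak convergence of $Y^T$ does not pass through a convolution for free, the kernels $\Psi^{(T)}$ converge to $g$ only in $\LL^p$ and not uniformly (because of the boundary layer), and enough uniform integrability has to be marshalled to kill the cross-terms. The companion difficulty, feeding into both the tightness estimate and this step, is the uniform-in-$T$ handling of $\Psi^{(T)}$ near $0$: crude derivative bounds are useless since $\left\|(\Psi^{(T)})'\right\|_{\LL^2}$ blows up, and one must instead exploit that the offending near-$0$ bump has small $\LL^p$-norm while only the smooth bulk needs a modulus-of-continuity estimate.
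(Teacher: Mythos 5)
This statement is imported verbatim as Theorem 2.2 of \cite{jaisson_limit_2015} and the paper gives no proof of its own, so the only meaningful comparison is with that reference: your proposal is a correct and faithful reconstruction of the Jaisson--Rosenbaum argument (resolvent asymptotics $\Psi^{(T)}\to\rho$ via transforms, moment bounds and tightness, identification of the limit martingale and conversion of the exponential-kernel Volterra equation into the CIR SDE, uniqueness by Yamada--Watanabe). Your remarks on the near-origin ``boundary layer'' of $\Psi^{(T)}$ and on the $\LL^2$-modulus of continuity are exactly the points the present paper later quantifies (Proposition \ref{prop: conv_psi_rho} and Lemma \ref{lem: maj_diff_psi}), the latter handling the modulus by combining $\|d^T\|_{\LL^2}=O(T^{-1/2})$ with $\|(d^T)'\|_{\LL^2}=O(T^{1/2})$ via Cauchy--Schwarz rather than by your layer/bulk splitting.
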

To prove the previous result, the authors use the $\LL^2(0,1)$ convergence of $\Psi^{(T),-}$ to an exponential (see Lemma 4.5 in \cite{jaisson_limit_2015}). In this paper, we decided to denote by $\rho^{-}$ the limit function, that is:
\begin{equation}
    \rho^{-}: x\in \RR \mapsto \frac{1}{m}e^{-x/m}, \quad m= \int_0^\infty t \phi(t) dt.
\end{equation}
The convergence rate of this result is one of the key point of our work and it is given in Proposition \ref{prop: conv_psi_rho_-}. Nevertheless we postpone the proof in Subsection \ref{subs: proof conv_psi_rho_-}.
\begin{prop}
    \label{prop: conv_psi_rho_-}
    There exists $C>0$ such that for $T>2$, $\left\|\Psi^{(T),-}-\rho^{-} \right\|_{\LL^2(0,1)} \leq \frac{C}{\sqrt{T}}$.
\end{prop}

These results were obtained under some strong assumption on the kernel. Whereas we will place our work under Assumption \ref{assump: phi}, it was proved in \cite{horst_functional_2024} that it is not necessary to get this to obtain the convergence. Indeed, the authors proved similar results under minimal conditions on the kernel of the process by assuming that the kernel is a  slowly varying function.

\paragraph{ \textbf{\texorpdfstring{$a_T =1$: Weakly Critical Hawkes Process (WCHP)}{} }}~\\
We now focus on the critical case that is the case $a_T = 1$. This case was also studied in \cite{horst_functional_2024}. In particular, they obtain the following results:
\begin{theorem}[Theorem 2.10 in \cite{horst_functional_2024}]
    If $\left\|\phi \right\|_{1} =1 $ and $m = \int_0^{\infty } s \phi(s) ds < +\infty$,
    the sequence of renormalized Hawkes intensities $\left( \Lambda^{T,0}_t \right)_{t\in [0,1]}$ defined in \eqref{def: Lambda^T} converges in law, for the Skorohod topology, toward the law of the unique strong solution of the following SDE on $[0, 1]$:
    $$X^{0}_t = \frac{\mu t }{m} + \frac{1}{m}\int_0^t \sqrt{X^{0}_s} dB_s$$
    where $B$ stands for the standard Brownian motion.
\end{theorem}
Their convergence result is also presented with the convergence rate using Wasserstein distance which we partially recall in Theorem \ref{thm: horst_vitesse_convergence}. Note that \cite{horst_functional_2024} gives the first convergence rate established for the behavior of WCHP.
\begin{theorem}[Theorem 2.12, \cite{horst_functional_2024}]
\label{thm: horst_vitesse_convergence}
    If $\phi$ has bounded variation, then for each $\kappa \in (0, 1/2)$ and each $K>0$, there exists a constant $C>0$ such that for any $T \in \NN^\ast$, we have
    \begin{equation*}
    \label{eq: borne_Horst_Xu}
        d_{\text{\cite{horst_functional_2024}}}\left(\Lambda^{T,0}, X^0\right) := \sup_{f\in \mathrsfs{B}_K }\sup_{t\in [0,1]} \Wcal_1 \left( f * \Lambda^{T,0} (t) , f * X^0 (t) \right) \leq C \left| \ln \left( \frac{1}{T} + \left\| \Psi^{(T),0}-\rho^0 \right\|_{\LL^2 (0,1)} \right)\right|^{-\kappa},
    \end{equation*}
    where $\mathrsfs{B}_K := \left\{ f\in \LL^{\infty}(\RR_+, \CC) \mid \left\| f\right\|_{\infty}\leq K \right\}$, $\rho^0 \equiv \frac{1}{m}$ and for any $f\in \mathrsfs{B}_K$,
    $$f * \Lambda^{T,0} (t) := \int_0^t f(t-s) d\Lambda^{T,0}_s \quad \text{and} \quad f * X^{0} (t) := \int_0^t f(t-s) dX^{0}_s, \quad t\in [0,1].$$
\end{theorem}
This theorem gives rise to the following remark.
\begin{remark}
\label{rem: horst_xu}
   While the estimate of \cite{horst_functional_2024} concerns the 1-Wasserstein distance, we instead work with the $\LL^2\left(\Omega, \LL^\infty(0,1) \right)$ norm, which dominates the 2-Wasserstein distance and which allows us indeed to control $\E{\sup_{t\in [0,1]} \left| \Lambda^{T,0}_t - X^0_t\right|}$ as
    $$\sup_{t\in [0,1]}\Wcal_1(\Lambda^{T,0}_t, X^0_t) \leq \sup_{t\in [0,1]}\Wcal_2(\Lambda^{T,0}_t, X^0_t) \leq \E{\sup_{t\in [0,1]} \left| \Lambda^{T,0}_t - X^0_t\right|} \leq \sqrt{\E{\sup_{t\in [0,1]} \left| \Lambda^{T,0}_t - X^0_t\right|^2}}.$$
    It is not straightforward to us that norm $d_{\text{\cite{horst_functional_2024}}}$ is dominated by $\E{\sup_{t}\left| \cdot \right|^p},\, p=1,2$, a limitation that we overcome in our work. We extend their approach by considering an alternative distance and incorporating the supremum norm, which yields an upper bound in the Skorokhod topology.\\
    Compared to the work of \cite{horst_functional_2024}, our result is established under stronger assumptions, due to our choice to work in the $\LL^2\left(\Omega, \LL^\infty(0,1) \right)$ norm, which naturally requires more regularity on the kernel.
\end{remark}

A key element in Theorem \ref{thm: horst_vitesse_convergence} is the $\LL^2(0,1)$ convergence of the resolvent $\Psi^{(T),0}$ towards the constant function $\rho^0 \equiv \frac{1}{m}$. More precisely, they derive an upper bound in $\LL^2$ norm for the integrated version of $\Psi^{(T),0}$ and $\frac{1}{m}$. While Proposition 2.13 in \cite{horst_functional_2024} is originally formulated in terms of integrals, a closer analysis of its proof leads to the following refined result:
\begin{prop}[Proposition 2.13 in \cite{horst_functional_2024}]
\label{prop: conv_psi_rho_0}
    There exists $C>0$ such that for any $T>2$, $\left\|\Psi^{(T),0}-\rho^{0} \right\|_{\LL^2(0,1)} \leq \frac{C}{\sqrt{T}}$ where $\rho^0$ stands for the constant function $\rho^0(t)=\frac{1}{m}$.
\end{prop}

\paragraph{ \textbf{\texorpdfstring{$a_T >1$ : Supercritical Nearly Unstable Hawkes Process (SNUHP)}{} }}~\\
The last case that we discuss in this paper is $a_T>1$. Inspired by the work of \cite{jaisson_limit_2015}, the authors of \cite{liu_scaling_2024} obtain a convergence result for the SNUHP. In particular, they get the following theorem:
\begin{theorem}[Theorem 1.2, \cite{liu_scaling_2024}]
    \label{thm: liu}
    Under Assumptions \ref{assump: phi} and assuming that $\sup_{T>1}\left\| \Psi^{T} \right\|_{\infty} <+\infty$, 
    the sequence of renormalized Hawkes process $\left(H^{T,+}_{tT}/T^2 \right)_{t\in [0,1]}$ defined in \eqref{def: Lambda^T} converges in law, for the Skorohod topology, towards the law of $\left( \int_0^t X^{+}_s ds\right)_{t\in [0,1]}$ where 
    $$X^{+}_t = \frac{1}{m}\int_0^t (\mu + X^{+}_s) ds + \frac{1}{m}\int_0^t \sqrt{X^{+}_s} dB_s$$
    with $B$ standing for the standard Brownian motion and $m= \int_0^{\infty} s \phi(s) ds$.
\end{theorem}
As for the previous cases, a key element of this convergence come from the convergence of the resolvent:
\begin{prop}
    \label{prop: conv_psi_rho_+}
    There exists $C>0$ such that for $T>2$, $\left\|\Psi^{(T),+}-\rho^{+} \right\|_{\LL^2(0,1)} \leq \frac{C}{\sqrt{T}}$ where $\rho^{+}$ is the function defined on $\RR_+$ by
    $$\rho^{+}(x) := \frac{1}{m} e^{x/m}, \quad m= \int_0^\infty t \phi(t) dt.$$
\end{prop}
We now make a few remarks concerning their theorem and the convergence of $\Psi^{(T),+}$.
\begin{remark}
Note that Theorem \ref{thm: liu} does not establish the convergence of the intensity process, but only that of the counting process. This gap in the literature will be addressed in Theorem \ref{th: NUHP to X}, where we prove the convergence of the intensity. Moreover, a quantitative version of their result will follow directly from our analysis, as detailed in Corollary \ref{cor: autre maj}.\\
In addition, in \cite{liu_scaling_2024}, the convergence of $\Psi^{(T),+}$ is only shown pointwise (see Lemma 2.2 in \cite{liu_scaling_2024}). In contrast, we refine their analysis by establishing convergence in $\LL^2(0,1)$.
\end{remark}

\section{\texorpdfstring{Coupling between $\Ntil^T $ and the Brownian sheet $W$}{}}
\label{sec: coupling}

The second main convergence in this work is leaded by the Poisson imbedding structure. More precisely, we analyze the convergence of the rescaled compensated Poisson measure $\Ntil^T$ towards the Brownian sheet $W$, in a sense that is made precise in this section. This coupling forms the backbone of our quantitative analysis and its construction is given in this section. Since the construction of $W$ relies on a discretisation $(t_i^k)_{i\in \NN}$, we need to keep in mind that $W$ depends on $k\in \NN^\ast$ and $T$. However, we decide for simplicity not to indicate this dependence in the notation of $W$ since our constructed Brownian sheets are all equal in law to the standard Brownian sheet. We now fix the parameter $k\in \NN^\ast$ and we will give it a specific value later.

\subsection{Construction of \texorpdfstring{$W$}{}}
In \cite{fomichov_implementable_2021}, the authors present an algorithm allowing the coupling between a Brownian motion and a Lévy process. In this section, we have adapted their proof to create a similar algorithm and to couple $\Ntil^T $ with a Brownian sheet. We start this subsection with Proposition \ref{prop: construction grand drap}. This proposition will be the main tool of the construction of our Brownian sheet that is presented in Theorem \ref{thm: Couplage}.

\begin{prop}
\label{prop: construction grand drap}
    Fix $k\geq 1$. Let $B_{i,j}$ be a family of independent Brownian sheet on $\left[0,1/k\right]^2$. The process $W^B$ defined by $W^B\left(0,t \right)=W^B\left(s,0\right)=0$ and 
    \begin{align*}
   W^B\left(s,t \right) &= W^B\left( s, t^k_{j-1} \right)+W^B\left( t^k_{i-1}, t \right)-W^B\left( t^k_{i-1}, t^k_{j-1} \right) + B_{i,j}\left(s-t^k_{i-1},t-t^k_{j-1}\right)
   \end{align*}
   for $t_{i-1}^k < s \leq t_{i}^k$, $t_{j-1}^k < t \leq t_{j}^k = \frac{j}{k}$ and $(i,j) \in \NN^\ast \times \NN^\ast $
   is a Brownian sheet on $\RR_+^2$ that is a two-parameter centered Gaussian process vanishing on the axes and whose convariance is given by
   $$\E{W^B(s_1,t_1)W^B(s_2,t_2)} = (s_1 \wedge s_2)(t_1 \wedge t_2), \quad \forall (s_1,s_2,t_1,t_2) \in \RR_+^4 .$$
\end{prop}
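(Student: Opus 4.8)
The plan is to reduce the statement to two structural identities for $W^B$ and then read off its covariance.

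\textbf{Well-posedness, continuity, Gaussianity.} First I would observe that the defining recursion on the square $I_{i-1,k}\times I_{j-1,k}$ only references $W^B$ on the two lower grid lines $\{t^k_{i-1}\}\times\RR_+$ and $\RR_+\times\{t^k_{j-1}\}$, which lie in squares of strictly smaller index $i+j$; an induction on $i+j$ with the axes (where $W^B\equiv 0$) as base case makes the construction well posed on all of $\RR_+^2$. On each closed square $W^B$ is a finite sum of continuous maps, and the two descriptions of $W^B$ agree on a shared edge (on $\{s=t^k_i\}$ the increment term of the right-hand square is $B_{i+1,j}(0,\cdot)=0$, etc.), so $W^B$ is continuous. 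In particular each $W^B(s,t)$ is a finite $\RR$-linear combination of values of finitely many of the independent Gaussian processes $B_{a,b}$; since the family $(B_{a,b})$ is jointly Gaussian, $W^B$ is a centered continuous Gaussian field, and it remains only to compute $\E{W^B(s,t)W^B(s',t')}$ and match it with $(s\wedge s')(t\wedge t')$.

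\textbf{The key increment identity.} The crucial remark is that the recursion is just a rearrangement of a rectangular-increment identity: writing $\Delta_R W^B:=W^B(b,d)-W^B(a,d)-W^B(b,c)+W^B(a,c)$ for $R=(a,b]\times(c,d]$, the definition of $W^B$ on $I_{i-1,k}\times I_{j-1,k}$ says precisely
\[
\Delta_{(t^k_{i-1},s]\times(t^k_{j-1},t]}\,W^B \;=\; B_{i,j}\!\left(s-t^k_{i-1},\,t-t^k_{j-1}\right),\qquad (s,t)\in I_{i-1,k}\times I_{j-1,k}.
\]
Specializing to corners gives $\Delta_{I_{a-1,k}\times I_{b-1,k}}W^B=B_{a,b}(1/k,1/k)$, and since $W^B$ vanishes on the axes, an induction on $a+b$ yields $W^B(t^k_a,t^k_b)=\sum_{p\le a}\sum_{q\le b}B_{p,q}(1/k,1/k)$. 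More generally, tiling $[0,s]\times[0,t]$ (with $(s,t)$ in the square $(i,j)$) by the grid into the full squares $I_{a-1,k}\times I_{b-1,k}$ ($a<i$, $b<j$), the boundary strips $(t^k_{i-1},s]\times I_{b-1,k}$ ($b<j$) and $I_{a-1,k}\times(t^k_{j-1},t]$ ($a<i$), and the corner rectangle $(t^k_{i-1},s]\times(t^k_{j-1},t]$, the identity above applied at the upper-right corner of each tile shows that the $W^B$-increment over each tile $R$ equals the increment $\Delta_{\widehat R}B_{p,q}$ of the single Brownian sheet indexed by the grid square $(p,q)\ni R$, where $\widehat R\subset[0,1/k]^2$ is the translate of $R$ by the lower-left corner of that grid square. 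By additivity of $\Delta\,\cdot$ and $W^B(0,\cdot)=W^B(\cdot,0)=0$, this expresses $W^B(s,t)$ as a finite sum of increments of the $B_{p,q}$'s over essentially disjoint rectangles.

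\textbf{Covariance.} Expanding $\E{W^B(s,t)W^B(s',t')}$ over the two tilings, a cross term pairing a tile $R$ of the first with a tile $R'$ of the second vanishes unless $R$ and $R'$ lie in the same grid square $(p,q)$ (otherwise the two increments come from independent, centered $B$'s); when they do, the term equals $\E{\Delta_{\widehat R}B_{p,q}\,\Delta_{\widehat{R'}}B_{p,q}}=\pi(\widehat R\cap\widehat{R'})=\pi(R\cap R')$ by the orthogonal-measure property $\E{\Delta_U B\,\Delta_V B}=\pi(U\cap V)$ of a Brownian sheet, the two tiles being translated by the same vector. Since $\pi(R\cap R')=0$ whenever $R,R'$ sit in different grid squares, in all cases $\E{\Delta_R W^B\,\Delta_{R'}W^B}=\pi(R\cap R')$, and summing over all pairs of tiles gives
\[
\E{W^B(s,t)W^B(s',t')}=\pi\!\left(([0,s]\times[0,t])\cap([0,s']\times[0,t'])\right)=(s\wedge s')\,(t\wedge t').
\]
Together with the first step this identifies $W^B$ as a Brownian sheet on $\RR_+^2$. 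I expect the only genuinely delicate point to be the bookkeeping in the second step — stating the increment identity, the grid tiling and the translates $\widehat R$ correctly, including the degenerate cases $i=1$ or $j=1$ where some families of tiles are empty; once that is set up, the remaining steps are essentially automatic.
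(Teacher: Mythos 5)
Your proof is correct and takes a genuinely different route from the paper's. The paper proves the result by induction on the grid square index $\ell$, showing $W^B$ is a Brownian sheet on $A_\ell=(0,t_\ell^k]^2$, and verifies the covariance $\E{W^B(s,t)W^B(s',t')}=(s\wedge s')(t\wedge t')$ via an explicit five-case analysis depending on where $(s,t)$ and $(s',t')$ sit relative to $A_{\ell-1}$ (illustrated with a figure). You instead isolate the structural content of the recursion in a single identity — that the rectangular increment of $W^B$ anchored at the lower-left corner of any grid square equals the corresponding value of the associated $B_{i,j}$ — then write $W^B(s,t)$ as a sum of increments over a grid-aligned tiling of $[0,s]\times[0,t]$ and read off the covariance from independence across grid squares together with the orthogonal-measure property $\E{\Delta_U B\,\Delta_V B}=\pi(U\cap V)$, collapsing all the case analysis into the single Lebesgue-measure computation $\sum_{R,R'}\pi(R\cap R')=\pi\bigl(([0,s]\times[0,t])\cap([0,s']\times[0,t'])\bigr)$. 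Your argument is shorter, avoids geometric casework, and makes the role of independence and of the additivity of increments transparent; the paper's is more hands-on and keeps every step at the level of elementary covariance calculations without invoking the orthogonal-measure viewpoint. One point worth making explicit if you write this up: your increment identity also yields, at grid points, exactly the ``convenient writing'' of $W$ as a linear combination of the $B_{i,j}$'s that the paper records in the remark following the proposition, so the two formulations are equivalent.
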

\begin{proof}
    We prove the result using induction. To do so, we employ for any $\ell\in \NN^\ast$ the notation $A_{\ell} := \left( 0, t^k_{\ell}\right]^2.$\\
    For $\ell=1$, we have $W^B(s,t) = B_{1,1}\left(s,t \right)$ for any $(s,t) \in A_{1}$. Then, $W^B$ is a Brownian sheet on $A_{1}$.\\
    Let $\ell \geq 2$. Suppose that $W^B$ is a Brownian sheet on $A_{\ell-1}$. For any $(s,t), (s',t') \in A_{\ell}$, we have:
        \begin{align*}
            \E{W^B(s',t')W^B(s,t)} =& \E{ W^B(s',t') B_{i,j}\left(s-t^k_{i-1},t-t^k_{j-1} \right)}\\
            &+ \E{W^B(s',t') \left(W^B\left(s, t^k_{j-1} \right)+W^B\left(t^k_{i-1},t \right)-W^B\left(t^k_{i-1}, t^k_{j-1} \right) \right)}
        \end{align*}
        where $(s,t)\in I_{i,k}\times I_{j,k}$ and $\max(i,j) \leq \ell$. We set some notations and split the proof in different cases explained in Figure \ref{fig: coupling}.
\begin{figure}[ht]
\subfigure[Notations \label{fig: notations}]{%
\begin{tikzpicture}[scale=0.7]
    \draw[->, very thick] (0,0)--(0,5);
    \draw[->, very thick] (0,0)--(5,0);
    \draw[black,fill=gray!20] (0,0) rectangle (3,3);
    \draw (0,0) rectangle (4.5,4.5);
    \draw[dashed] (3,0)--(3,4.5);
    \draw[dashed] (0,3)--(4.5,3);
    \draw (3,0) node[below] {$t_{\ell-1}^k$};
    \draw (4.5,0) node[below] {$t_{\ell}^k$};
    \draw (0,3) node[left] {$t_{\ell-1}^k$};
    \draw (0,4.5) node[left] {$t_{\ell}^k$};
    \draw (1.6,1.6) node {$A_{\ell-1}$};
    \draw (3.8,3.8) node {$I_{\ell-1,k}^2$};
    \draw (1.6,3.8) node {$\Jtil_{\ell-1}$};
    \draw (3.8,1.6) node {$J_{\ell-1}$};
    \draw (0.5,0.5) node[right] {$x=(s,t)$} node{$\bullet$};
\end{tikzpicture}}
\hfill
\subfigure[ $(x,x')\in A_{\ell-1}^2$\label{fig: Cas 1}]{%
\begin{tikzpicture}[scale=0.7]
    \draw[->, very thick] (0,0)--(0,5);
    \draw[->, very thick] (0,0)--(5,0);
    \draw[black,fill=gray!20] (0,0) rectangle (3,3);
    \draw (0,0) rectangle (4.5,4.5);
    \draw[dashed] (3,0)--(3,4.5);
    \draw[dashed] (0,3)--(4.5,3);
    \draw (3,0) node[below] {$t_{\ell-1}^k$};
    \draw (4.5,0) node[below] {$t_{\ell}^k$};
    \draw (0,3) node[left] {$t_{\ell-1}^k$};
    \draw (0,4.5) node[left] {$t_{\ell}^k$};
    \draw (0.5,2.3) node[below right=-0.05cm] {$x$} node{$\bullet$};
    \draw (1.3,0.9) node[below right=-0.1cm] {$x'$} node{$\bullet$};
\end{tikzpicture}}
\hfill
\subfigure[ $x'\in A_{\ell-1}$ and $x\in J_{\ell-1}$ \label{fig: Cas 2}]{%
     \begin{tikzpicture}[scale=0.7]
    \draw[->, very thick] (0,0)--(0,5);
    \draw[->, very thick] (0,0)--(5,0);
    \draw[black,fill=gray!20] (0,0) rectangle (3,3);
    \draw (0,0) rectangle (4.5,4.5);
    \draw[dashed] (3,0)--(3,4.5);
    \draw[dashed] (0,3)--(4.5,3);
    \draw (3,0) node[below] {$t_{\ell-1}^k$};
    \draw (4.5,0) node[below] {$t_{\ell}^k$};
    \draw (0,3) node[left] {$t_{\ell-1}^k$};
    \draw (0,4.5) node[left] {$t_{\ell}^k$};
    \draw (3.3,2) node[below right=-0.05cm] {$x$} node{$\bullet$};
    \draw (1.3,0.9) node[below right=-0.1cm] {$x'$} node{$\bullet$};
\end{tikzpicture}
}
\subfigure[ $x\in A_{\ell-1}$ and $x\in I_{\ell-1,k}^2$\label{fig: Cas 3}]{%
\begin{tikzpicture}[scale=0.7]
    \draw[->, very thick] (0,0)--(0,5);
    \draw[->, very thick] (0,0)--(5,0);
    \draw[black,fill=gray!20] (0,0) rectangle (3,3);
    \draw (0,0) rectangle (4.5,4.5);
    \draw[dashed] (3,0)--(3,4.5);
    \draw[dashed] (0,3)--(4.5,3);
    \draw (3,0) node[below] {$t_{\ell-1}^k$};
    \draw (4.5,0) node[below] {$t_{\ell}^k$};
    \draw (0,3) node[left] {$t_{\ell-1}^k$};
    \draw (0,4.5) node[left] {$t_{\ell}^k$};
    \draw (3.3,3.7) node[below right=-0.05cm] {$x$} node{$\bullet$};
    \draw (1.3,0.9) node[below right=-0.1cm] {$x'$} node{$\bullet$};
\end{tikzpicture}}
\hfill
\subfigure[ $(x,x')\in J_{\ell-1}\times \Jtil_{\ell-1}$\label{fig: Cas 4}]{%
\begin{tikzpicture}[scale=0.7]
    \draw[->, very thick] (0,0)--(0,5);
    \draw[->, very thick] (0,0)--(5,0);
    \draw[black,fill=gray!20] (0,0) rectangle (3,3);
    \draw (0,0) rectangle (4.5,4.5);
    \draw[dashed] (3,0)--(3,4.5);
    \draw[dashed] (0,3)--(4.5,3);
    \draw (3,0) node[below] {$t_{\ell-1}^k$};
    \draw (4.5,0) node[below] {$t_{\ell}^k$};
    \draw (0,3) node[left] {$t_{\ell-1}^k$};
    \draw (0,4.5) node[left] {$t_{\ell}^k$};
    \draw (1.4,3.7) node[below right=-0.1cm] {$x'$} node{$\bullet$};
    \draw (3.3,0.7) node[below right=-0.05cm] {$x$} node{$\bullet$};
\end{tikzpicture}
}
\hfill
\subfigure[$(x,x')\in I_{\ell-1,k}^2\cup J_{\ell-1}$\label{fig: Cas 5}]{%
\begin{tikzpicture}[scale=0.7]
    \draw[->, very thick] (0,0)--(0,5);
    \draw[->, very thick] (0,0)--(5,0);
    \draw[black,fill=gray!20] (0,0) rectangle (3,3);
    \draw (0,0) rectangle (4.5,4.5);
    \draw[dashed] (3,0)--(3,4.5);
    \draw[dashed] (0,3)--(4.5,3);
    \draw (3,0) node[below] {$t_{\ell-1}^k$};
    \draw (4.5,0) node[below] {$t_{\ell}^k$};
    \draw (0,3) node[left] {$t_{\ell-1}^k$};
    \draw (0,4.5) node[left] {$t_{\ell}^k$};
    \draw (4.2,3.7) node[below left=-0.05cm] {$x'$} node{$\bullet$};
    \draw (3.3,0.7) node[below right=-0.05cm] {$x$} node{$\bullet$};
\end{tikzpicture}     
}
\caption{Notations and the structure of the proof}\label{fig: coupling}
\end{figure}
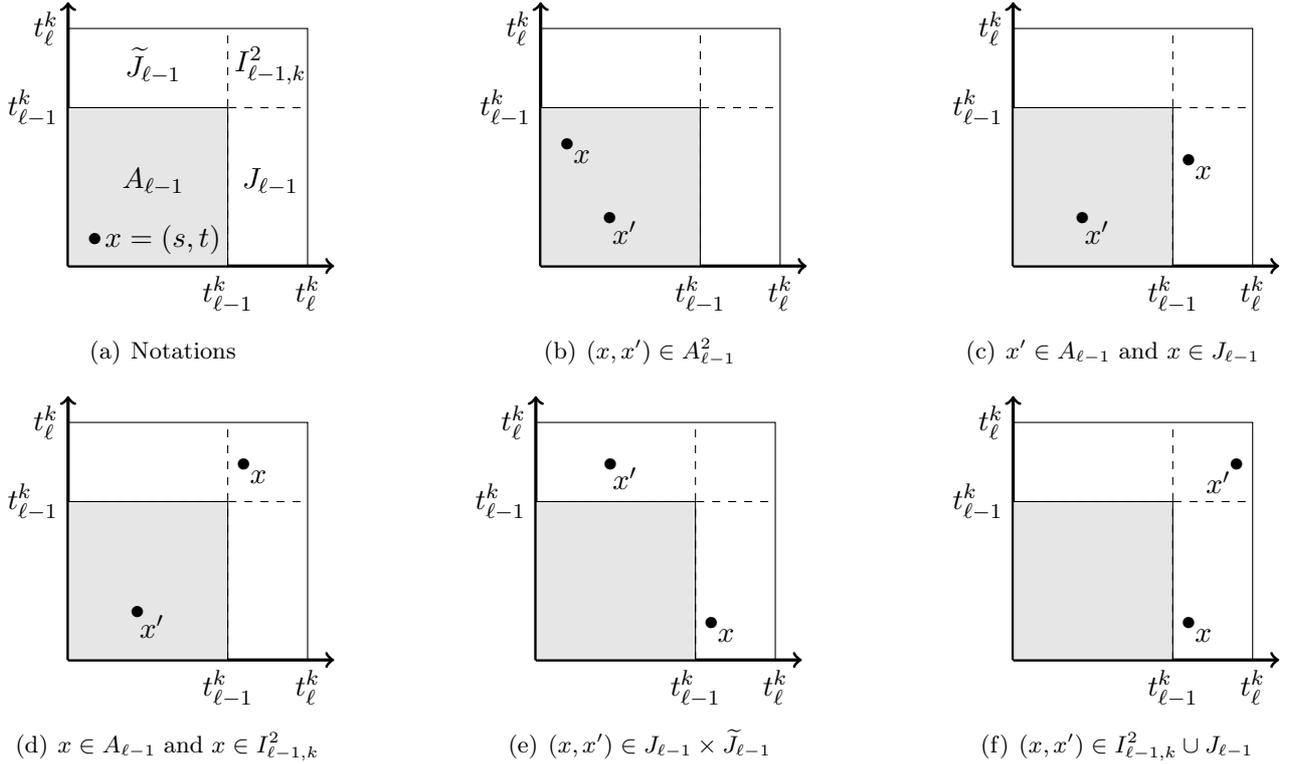

        \begin{itemize}
            \item[$\circ$] If  $(s,t), (s',t') \in A_{\ell-1}$ (this corresponds to Figure \ref{fig: Cas 1}) then $\E{W^B(s',t')W^B(s,t)} = (s\wedge s')(t\wedge t')$.
            \item[$\circ$] Suppose that $(s',t')\in A_{\ell-1}$ and $(s,t)\in A_{\ell}\setminus A_{\ell-1}$ (note that this is illustrate by Figures \ref{fig: Cas 2} and \ref{fig: Cas 3}). We denote by $(i,j)$ the indexes such that $t^k_{i-1} < s \leq t^k_{i}$, $t^k_{j-1} < t \leq t^k_{j-1}$. There are different cases:
            \begin{itemize}
                \item The first case that we handle is the one presented in Figure \ref{fig: Cas 2}. It corresponds to the case where $i=\ell$ and $j < \ell$. Then, we have
                $$W^B\left(s, t^k_{j-1} \right)-W^B\left( t^k_{\ell-1}, t^k_{j-1} \right) = \sum_{p=1}^{j-1} B_{\ell, p} \left(s-t^k_{\ell-1},\frac{1}{k}\right).$$
                And so,
                \begin{align*}
            \E{W^B(s',t')W^B(s,t)} =& \E{ W^B(s',t') B_{\ell,j}\left(s-t^k_{i-1},t-t^k_{j-1} \right)}\\
            &+ \E{W^B(s',t') \left(W^B\left(t^k_{i-1},t \right)+\sum_{p=1}^{j-1} B_{\ell, p} \left(s-t^k_{\ell-1},\frac{1}{k} \right)\right)}.
                \end{align*}
            Since the $B_{\ell,p}$ for $p=1,\dots, j$ are independent of $W^B(s',t')$, we have:
                \begin{align*}
            \E{W^B(s',t')W^B(s,t)} =& \E{W^B(s',t') W^B\left(t^k_{\ell-1},t \right)} = s'(t\wedge t') = (s\wedge s')(t\wedge t').
                \end{align*}         
                \item The case $j=\ell$ and $i<\ell$ can be handled in the same way as the previous case. The computations will lead to
                $$\E{W^B(s',t')W^B(s,t)} = \E{W^B(s',t') W^B\left(s,t^k_{\ell-1} \right)} = (s\wedge s')t' = (s\wedge s')(t\wedge t').$$
                \item We now focus on the case $i=j=\ell$ (see Figure \ref{fig: Cas 3}). The proof of this case relies on the previous case. Indeed, we have
                \begin{align*}
                    \E{W^B(s',t')W^B(s,t)}= & \E{W^B(s',t')W^B\left(s,t^k_{\ell-1} \right)}+\E{W^B(s',t')W^B\left(t^k_{\ell-1},t \right)}\\
                    &- \E{W^B(s',t')W^B\left(t^k_{\ell-1},t^k_{\ell-1} \right)}\\
                    &+ \E{W^B\left(s', t' \right)B_{\ell,\ell}\left(s-t^k_{\ell-1},t-t^k_{\ell-1} \right)}.
                 \end{align*}
                 Hence, the previous cases yields
                \begin{align*}
                    \E{W^B(s',t')W^B(s,t)} &= s't'+s't'-s't'+0 = s't' = (s\wedge s')(t\wedge t').
                 \end{align*}
            \end{itemize}
            \item[$\circ$] Suppose that $(s,t),(s',t') \in A_\ell \setminus A_{\ell-1}$ (see Figures \ref{fig: Cas 4} and \ref{fig: Cas 5}).
            \begin{itemize}
                \item Let us begin with the case illustrated by Figure $\ref{fig: Cas 4}$, i.e. $s'\leq t^k_{\ell-1}< s$. Then there exists $j$ such that $t^k_{j-1}<t\leq t^k_{j}$ and we have
                $$W^B(s,t)-W^B\left(t^k_{\ell-1},t \right) = B_{\ell,j}\left(s-t^k_{\ell-1},t-t^k_{j-1} \right) + \sum_{q=1}^{j-1} B_{\ell,j-q}\left(s-t^k_{\ell-1},\frac{1}{k} \right).$$
                Hence, by independence,
                $$\E{W^B(s',t')\left( W^B(s,t)-W^B\left(t^k_{\ell-1},t \right)\right)} = 0.$$
                and so, using the previous parts,
                $$\E{W^B(s',t')W^B(s,t)} = \E{W^B(s',t')W^B\left(t^k_{\ell-1},t \right)} = s'(t\wedge t')=(s'\wedge s)(t\wedge t').$$
                \item Suppose that we are in the case of Figure \ref{fig: Cas 5}, i.e. $t^k_{\ell-1}<s$ and $t^k_{\ell-1}<s'$. We denote by $j$ and $j'$ the integers such that $t^k_{j-1}< t \leq t^k_{j}$ and $t^k_{j'-1}< t \leq t^k_{j'}$ with $j,j'\leq \ell$. Without loss of generality, we can suppose that $t'\leq t$ (i.e. $j'\leq j$). Then, we get:
                \begin{align*}
                    W^B(s,t)- W^B\left(t^k_{\ell-1},t \right) &= B_{\ell,j} \left(s-t^k_{\ell-1}, t-t^k_{j-1} \right) + \sum_{q=1}^{j-1}B_{\ell,j-q} \left(s-t^k_{\ell-1}, \frac{1}{k} \right),\\
                    W^B(s',t')- W^B\left(t^k_{\ell-1},t' \right) &= B_{\ell,j'} \left(s'-t^k_{\ell-1}, t'-t^k_{j'-1} \right) + \sum_{q=1}^{j'-1}B_{\ell,j'-q} \left(s'-t^k_{\ell-1}, \frac{1}{k} \right).
                \end{align*}
                Hence, by properties of the Brownian sheet, we have
                \begin{align*}
                   & \E{\left( W^B(s,t)- W^B\left(t^k_{\ell-1},t \right)\right)\left(W^B(s',t')- W^B\left(t^k_{\ell-1},t' \right) \right)} \\
                   =& \left\{\begin{array}{cc}
                       \left[s\wedge s'-t^k_{\ell-1}\right]\left[t\wedge t'-t^k_{j-1}\right] + \left[s\wedge s' - t^k_{\ell-1} \right]t^k_{j-1}& \text{if } j=j'  \\
                       \left[ s\wedge s' - t^k_{\ell-1}\right]\left(t'-t^k_{j'-1} \right) + \left[s\wedge s'-t^k_{\ell-1} \right]t^k_{j'-1} &  \text{if } j'<j
                   \end{array} \right.\\
                   =& \left[s\wedge s'-t^k_{\ell-1}\right](t\wedge t').
                \end{align*}

                Moreover, using the previous cases, we have
                $$\hspace*{-1cm}\E{W^B(s,t)W^B\left(t^k_{\ell-1}, t'\right)} = t^k_{\ell-1}(t\wedge t'), \qquad \E{W^B(s',t')W^B\left(t^k_{\ell-1}, t\right)} = t^k_{\ell-1}(t\wedge t')$$
                $$\text{and }\E{W^B\left(t^k_{\ell-1}, t\right)W^B\left(t^k_{\ell-1}, t'\right)} = t^k_{\ell-1}(t\wedge t').$$
                Therefore,
                \begin{align*}
                    \E{W^B(s,t)W^B(s',t')} &= (s'\wedge s)(t\wedge t'). 
                \end{align*}
                \item The case $t^k_{\ell-1}<t$ and $t^k_{\ell-1}<t'$ can be handled using the same type of computations as for the previous case.
            \end{itemize}
        \end{itemize}
    
\end{proof}
\begin{remark}
    Another convenient writing of $W$ on $I_{i,k}\times I_{j,k}$ is given by a linear combination of the $B_{i,j}$. Indeed, for any $(s,t)\in I_{i,k}\times I_{j,k}$, we have
    \begin{align*}
    W(s,t)&= \sum_{1\leq l\leq i-1,1\leq l'\leq j-1}B_{l,l'}(\frac{1}{k},\frac{1}{k}) +B_{i,j}(s-t_{i-1}^k,t-t_{j-1}^k)\\
   &+ \sum_{1\leq l\leq i-1}B_{l,j}(\frac{1}{k},t-t_{j-1}^k)+ \sum_{1 \leq l' \leq j-1}B_{i,l'}(s-t_{i-1}^k, \frac{1}{k})
\end{align*}
\end{remark}

In order to make a coupling between $\Ntil^T$ and $W$, we need to introduce the rectangular increments of $\Ntil^T$ on the discretized $\RR_+^2$:
\begin{equation*}
  \Delta_{(i,j)}^k \Ntil^T := \Ntil^T\left(t_i^k, t_j^k \right)-\Ntil^T\left(t_{i-1}^k, t_j^k \right)-\Ntil^T \left(t_i^k, t_{j-1}^k \right)+\Ntil^T\left(t_{i-1}^k, t_{j-1}^k \right).
\end{equation*}
Note that $\Delta_{(i,j)}^k \Ntil^T$ is a centered Poisson law of parameter $\frac{1}{k^2}$. Moreover, Let $\left(U_{ij}\right)_{(i,j)\in \NN^\ast\times \NN^\ast}$ be standard independent uniforms and take $h_k$ defined by
\begin{equation}
    \label{def: methode fonction inverse}
    h_k(x,u) := F_2^{-1} \left( \mathbb{P}(\zeta_1 < x ) + u \PP(\zeta_1 = x) \right),
\end{equation}
with $\zeta_1 = \Ntil^T\left(1/k, 1/k \right)$ and where $F_2$ is the cumulative distribution function of the random variable $\zeta_2\sim \Ncal \left(0,1/k^2 \right)$. For any $(i,j)\in \NN^\ast\times \NN^\ast$, we set $\xi_{i,j} = h_k \left( \Delta^k_{(i,j)}\Ntil^T, U_{ij} \right)$ and $\beta_{i,j}$ a pinned Brownian sheet on $\left[ 0, \frac{1}{k}\right]$ i.e. $\beta_{i,j}$ is a Gaussian process with covariance function
$$\E{\beta_{i,j}(x,y)\beta_{i,j}(x',y')} = (x\wedge x')(y\wedge y') - k^2 xx'yy', \quad \forall(x,x',y,y')\in \left[ 0, \frac{1}{k}\right]^4.$$
Moreover, the family $\left( \beta_{i,j}\right)_{(i,j)\in \NN^\ast \times \NN^\ast}$ is supposed to be mutually independent and it is also independent of the uniforms $\left(U_{ij}\right)_{(i,j)\in \NN^\ast\times \NN^\ast}$ and of the rectangular increments $\left(\Delta_{(i,j)}^k \Ntil^T\right)_{(i,j)\in \NN^\ast \times \NN^\ast}$.
\begin{remark}
    \label{rem: comonotonicity}
    By taking $\xi_{i,j} = h_k \left( \Delta^k_{(i,j)} \Ntil^T, U_{ij} \right)$ we construct an independent and identically distributed family of Gaussian $\Ncal \left(0, \frac{1}{k^2} \right)$.  Moreover, we also construct a comonotonic coupling between $\xi_{i,j}$ and $\Delta^k_{(i,j)} \Ntil^T$, that is 
\end{remark}

We now define a stochastic process $W \left(= W^{T,k}\right)$ by setting $W\left(0,t \right)=W\left(s,0\right)=0$ for all $(s,t)\in \RR_+^2$ and 
\begin{eqnarray}
   W\left(s,t \right) &=& W\left( s, t_{j-1}^k \right)+W\left( t_{i-1}^k, t \right)-W\left( t_{i-1}^k, t_{j-1}^k \right) \nonumber \\
   &\quad& + \beta_{i,j}\left( s-t_{i-1}^k, t-t_{j-1}^k \right) + k^2 \left( s-t_{i-1}^k\right) \left( t-t_{j-1}^k\right)\xi_{i,j} \label{eq: drap}
\end{eqnarray}
for $t_{i-1}^k < s \leq t_{i}^k$ and $t_{j-1}^k < t \leq t_{j}^k$ with $(i,j) \in \NN^\ast \times \NN^\ast $.

\begin{theorem}
\label{thm: Couplage}
    $W$ as defined in \eqref{eq: drap} is a Brownian sheet on $\RR_+^2$ whose increments over the given grid are coupled with the increments of $\Ntil^T$.
\end{theorem}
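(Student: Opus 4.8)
The plan is to verify the two assertions of the statement separately: first that $W$ defined in \eqref{eq: drap} is a Brownian sheet on $\RR_+^2$, and second that its rectangular increments over the grid $(t_i^k)$ are (comonotonically) coupled with those of $\Ntil^T$. For the second assertion, the work is already done: by construction $k^2(s-t_{i-1}^k)(t-t_{j-1}^k)\xi_{i,j}$ evaluated at $(s,t)=(t_i^k,t_j^k)$ gives exactly $\xi_{i,j}=h_k(\Delta_{(i,j)}^k\Ntil^T,U_{ij})$, while the $\beta_{i,j}$ term vanishes at the corners of $I_{i,k}\times I_{j,k}$ (it is a pinned sheet); hence the rectangular increment $\Delta_{(i,j)}^k W$ equals $\xi_{i,j}$, which by \eqref{def: methode fonction inverse} and the inverse-transform / quantile coupling is the comonotonic coupling of $\Delta_{(i,j)}^k\Ntil^T$ with an $\Ncal(0,1/k^2)$ variable (cf. Remark \ref{rem: comonotonicity}). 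So the substantive part is showing $W$ is a standard Brownian sheet.

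For that, I would mimic the induction of Proposition \ref{prop: construction grand drap}, replacing $B_{i,j}$ by $\beta_{i,j}(\cdot,\cdot)+k^2(\cdot)(\cdot)\xi_{i,j}$. The first step is to observe that this replacement block is itself, for each fixed $(i,j)$, a Brownian sheet on $[0,1/k]^2$: indeed for a standard Brownian sheet $B$ on $[0,1/k]^2$ one has the orthogonal decomposition $B(x,y)=\big(B(x,y)-k^2xy\,B(1/k,1/k)\big)+k^2xy\,B(1/k,1/k)$, where the first summand is a pinned sheet with exactly the covariance prescribed for $\beta_{i,j}$, the second is $k^2xy\,\zeta_2$ with $\zeta_2\sim\Ncal(0,1/k^2)$, and the two are independent; since $\xi_{i,j}\stackrel{d}{=}\zeta_2$ and $\beta_{i,j}\perp\xi_{i,j}$, the block $\beta_{i,j}+k^2(\cdot)(\cdot)\xi_{i,j}$ has the law of $B$. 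Next, because the family $(\beta_{i,j})$ is mutually independent and independent of $(U_{ij})$ and of $(\Delta_{(i,j)}^k\Ntil^T)$ — hence of $(\xi_{i,j})$, which are built only from the $U_{ij}$ and the increments — the blocks for distinct $(i,j)$ are mutually independent. But this is exactly the hypothesis of Proposition \ref{prop: construction grand drap} (a family of independent Brownian sheets on $[0,1/k]^2$), and the recursion \eqref{eq: drap} is verbatim the recursion defining $W^B$ there with $B_{i,j}$ the block above. Therefore Proposition \ref{prop: construction grand drap} applies directly and yields that $W$ is a Brownian sheet on $\RR_+^2$.

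The one point requiring a little care — and the place I expect the only real obstacle — is the joint/independence bookkeeping: one must check that the $\xi_{i,j}$ appearing in the block for index $(i,j)$ are independent \emph{across} $(i,j)$ (so that the blocks are genuinely independent) and that each $\xi_{i,j}$ is independent of the corresponding $\beta_{i,j}$. The former is \emph{not} automatic, since the $\Delta_{(i,j)}^k\Ntil^T$ are independent only because $TN^T$ is a Poisson measure (Lemma \ref{lem-outil-tilde-N-T} and the remark that disjoint rectangles give independent increments), and the $U_{ij}$ are independent by assumption, so the $\xi_{i,j}=h_k(\Delta_{(i,j)}^k\Ntil^T,U_{ij})$ are independent as measurable functions of independent pairs; the latter holds because the $\beta$-family is assumed independent of both the $U_{ij}$ and the rectangular increments. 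Once this is recorded, invoking Proposition \ref{prop: construction grand drap} closes the argument, and the coupling statement follows from the corner evaluation above.
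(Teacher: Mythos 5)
Your proof is correct and follows essentially the same route as the paper: both identify the block $B_{i,j}(x,y)=\beta_{i,j}(x,y)+k^2xy\,\xi_{i,j}$, check that the family $(B_{i,j})$ consists of independent Brownian sheets on $[0,1/k]^2$, and then invoke Proposition \ref{prop: construction grand drap}. The only cosmetic difference is that the paper verifies $B_{i,j}$ is a Brownian sheet by directly computing $\E{B_{i,j}(x,y)B_{i,j}(x',y')}=(x\wedge x')(y\wedge y')$, whereas you obtain the same fact by decomposing a standard sheet $B$ as $\big(B-k^2xy\,B(1/k,1/k)\big)+k^2xy\,B(1/k,1/k)$ and matching laws; you also spell out the corner evaluation $\Delta_{(i,j)}^kW=\xi_{i,j}$ (using $\beta_{i,j}(1/k,1/k)=0$ a.s.) and the independence bookkeeping, both of which the paper leaves implicit by pointing to Remark \ref{rem: comonotonicity} and calling the independence ``trivial.''
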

    
\begin{proof}
    For any $(i,j)\in \NN^\ast\times \NN^\ast$, we denote by $B_{i,j}$ the following:
    $$B_{i,j}(x,y) = \beta_{i,j}(x,y)+k^2 xy \xi_{i,j}, \quad (x,y)\in \left[ 0, 1/k\right]^2 .$$
    Let us prove that $B_{i,j}$ is a family of independent Brownian sheets.\\
    Take $(x,y)$ and $(x',y')$ both in $\left[ 0, 1/k\right]^2$. Then,
    \begin{align*}
        \E{B_{i,j}(x,y)B_{i,j}(x',y')} &= \E{\beta_{i,j}(x,y)\beta_{i,j}(x',y')} + k^4xyx'y' \E{\xi_{i,j}^2}\\
        &= (x\wedge x')(y\wedge y')-k^2xyx'y' + k^2xyx'y'\\
        &= (x\wedge x')(y\wedge y').
    \end{align*}
    The independence of the family is trivial. 
    Therefore, using Proposition \ref{prop: construction grand drap}, $W$ is a Brownian sheet on $\RR_+^2$. The comonotonicity of the increments is a consequence of Remark \ref{rem: comonotonicity}.
\end{proof}

\begin{notation}
    We will denote by $\FF = \left(\Fcal_t \right)_{t\in [0,1]}$ the following filtration:
    \begin{equation}
    \label{eq: filtration}
       \Fcal_t := \sigma \left( W([0,s]\times [0,a]),\Ntil^T((0,u]\times (0,b])\mid s\leq t, u\leq \frac{\lfloor tk\rfloor}{k}, (a,b)\in \RR_+^2 \right), \quad t\geq 0.
    \end{equation}
\end{notation}

\begin{remark}
\label{rem: filtration}
Ideally, we would have preferred a co-adapted coupling of the Poisson and Brownian martingale measures in the sense of \cite{walsh_introduction_1986}; however, this is unfortunately not the case.  More precisely, if we fix an index $i$ and an index $j$, and consider the process $(W([0,t] \times I_{j,k}))_{t\geq 0} $, we would have liked it to be an $\FF$-Brownian motion, where the filtration is given by  
$$ \Fcal_t = \sigma\left( W([0,u] \times I_{j,k}),~~\Ntil^T([0,s] \times I_{i,k}),~~u\leq t,~~s\leq t\right).$$
Unfortunately, $\left(W([0,t]\times I_{j,k})\right)_{t\geq 0}$ is not a $\FF$-Brownian motion since $W ([s,t]\times I_{j,k})$ is not independent of $\Fcal_s$ for $s\leq t$. By discretizing the coupling, we facilitate our analysis.
\end{remark}

\subsection{Asymptotic quality of the coupling}

In this section, we assess the quality of our coupling by leveraging a result from \cite{rio_upper_2009} also presented in \cite{fournier_simulation_2011}. To establish our bounds, we rely on the previous discretization, which is natural given that we constructed $W$ incrementally.

\begin{lem}[\cite{rio_upper_2009}, Theorem 4.1 or \cite{fournier_simulation_2011}, Theorem 4.1]
\label{lem: maj Rio}
    There is an universal constant $C>0$ such that for any sequence of i.i.d. random variables $(Y_i)_{i\in \NN^*}$ with mean $0$ and variance $\sigma^2$, for any $n \geq 1$,
    $$\Wcal_2^2 \left( \frac{1}{\sqrt{n}} \sum_{i=1}^n Y_i, \Ncal(0,\sigma^2)\right) \leq C\frac{\E{Y_1^4}}{n\sigma^2}.$$
    Here, $\Wcal_2$ stands for the $2$-Wasserstein distance defined by
    $$\Wcal_2^2(\nu_1, \nu_2) = \inf_{\gamma \sim \Gamma(\nu_1, \nu_2)} \iint_{\RR^2} d(x,y)^2 d\gamma(x,y), $$
    where $\Gamma(\nu_1, \nu_2)$ is the set of all couplings of $\nu_1$ and $\nu_2$.
\end{lem}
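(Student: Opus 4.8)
Since this lemma is quoted from \cite[Theorem~4.1]{rio_upper_2009} (see also \cite[Theorem~4.1]{fournier_simulation_2011}), in the paper it is simply invoked; let me nevertheless describe the argument one would give. The first point is that the only ``soft'' property of $\Wcal_2$ available here, namely that it is an ideal probability metric of order $1$ ($\Wcal_2(X+Z,Y+Z)\le\Wcal_2(X,Y)$ for independent $Z$, and $\Wcal_2(cX,cY)=|c|\,\Wcal_2(X,Y)$), only gives $\Wcal_2\big(n^{-1/2}\sum_{i=1}^n Y_i,\Ncal(0,\sigma^2)\big)\le\sqrt n\,\Wcal_2(Y_1,\Ncal(0,\sigma^2))$, which is useless. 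The gain to the rate $n^{-1/2}$ must come from the smoothing produced by the partial sums, and one wants it uniformly over all centered, variance-$\sigma^2$ laws with a finite fourth moment --- in particular lattice ones, for which classical Edgeworth expansions are unavailable; that is what makes the statement non-trivial.

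The plan is a Lindeberg replacement scheme. I would introduce i.i.d.\ Gaussians $Z_1,\dots,Z_n\sim\Ncal(0,\sigma^2)$, independent of the $Y_i$, set
$$S^{(k)}:=Z_1+\dots+Z_k+Y_{k+1}+\dots+Y_n,\qquad k=0,\dots,n,$$
so that $S^{(0)}=\sum_{i=1}^n Y_i$ and $S^{(n)}\sim\Ncal(0,n\sigma^2)$, and bound by the triangle inequality
$$\Wcal_2\Big(\tfrac{1}{\sqrt n}\sum_{i=1}^{n} Y_i,\ \Ncal(0,\sigma^2)\Big)\le\sum_{k=1}^{n}\Wcal_2\Big(\tfrac{S^{(k-1)}}{\sqrt n},\ \tfrac{S^{(k)}}{\sqrt n}\Big).$$
The $k$-th term compares $R_k+Y_k$ with $R_k+Z_k$, where $R_k:=Z_1+\dots+Z_{k-1}+Y_{k+1}+\dots+Y_n$ is independent of $(Y_k,Z_k)$ and carries a genuine Gaussian block (of variance $(k-1)\sigma^2$, or, if one instead retains the last $n-k$ summands as reservoir, a block of $n-k$ i.i.d.\ summands), which regularises the law of $n^{-1/2}R_k$ at spatial scale $\sqrt{\min(k-1,n-k)/n}\,\sigma$.

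The crux --- and what I expect to be the main obstacle --- is the per-step estimate. Since $\E{Y_k}=\E{Z_k}=0$ and $\E{Y_k^2}=\E{Z_k^2}=\sigma^2$, the laws of $S^{(k-1)}$ and $S^{(k)}$ coincide to second order, so after convolving with the smooth law of $R_k$ and Taylor-expanding the density to second order, the discrepancy is governed by a third derivative of the density of $n^{-1/2}R_k$ --- bounded by a power of $n/(\min(k,n-k)\sigma^2)$ via the Gaussian (or a local-limit) smoothing --- tested against $\E{|n^{-1/2}Y_k|^3}+\E{|n^{-1/2}Z_k|^3}\le C\,n^{-3/2}\E{Y_1^4}/\sigma$ (Hölder, together with $\E{Z_1^4}=3\sigma^4\le 3\,\E{Y_1^4}$). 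Converting this density/Kolmogorov-type control into a bound on $\Wcal_2$ and tracking the powers of $n$, one arrives at a per-step bound of the order $\Wcal_2^2\big(n^{-1/2}S^{(k-1)},\,n^{-1/2}S^{(k)}\big)\lesssim \E{Y_1^4}\,\sigma^{-2}\,n^{-2}\,\min(k,n-k)^{-1}$; taking square roots and summing with $\sum_{k=1}^{n}\min(k,n-k)^{-1/2}\le C\sqrt n$ gives $\Wcal_2\big(n^{-1/2}\sum_i Y_i,\Ncal(0,\sigma^2)\big)\lesssim\sqrt{\E{Y_1^4}/(n\sigma^2)}$, and squaring is the claim. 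The genuinely delicate points are the distribution-free smoothing estimate (especially its lattice version) and the passage from a density-difference bound to $\Wcal_2$; both are carried out in \cite{rio_upper_2009}.
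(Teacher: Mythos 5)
The paper offers no proof of this lemma: it is imported verbatim from the cited references (Rio's Theorem~4.1, restated in Fournier--Guillin's Theorem~4.1), so the citation itself is the ``proof'', and your opening sentence is consistent with what the authors do. The question is therefore only whether your supplementary sketch would actually close, and I do not think it does as written. The Lindeberg telescoping is a legitimate frame, but the entire content of the theorem sits in the per-step estimate $\Wcal_2^2\bigl(n^{-1/2}S^{(k-1)},\,n^{-1/2}S^{(k)}\bigr)\lesssim \E{Y_1^4}\,\sigma^{-2}\,n^{-2}\,\min(k,n-k)^{-1}$, which you assert rather than derive, and which is stronger than what the argument you describe delivers. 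Concretely: converting a third-order Taylor discrepancy of densities into a $\Wcal_2^2$ bound goes through a weighted negative-Sobolev quantity of the type $\int (F_p-F_q)^2/\min(p,q)$, and if one runs this with a Gaussian reservoir of variance $\min(k-1,n-k)\,\sigma^2$ one lands on a per-step bound of order $n^{-1}\,\E{Y_1^4}\,\sigma^{-2}\,\min(k,n-k)^{-2}$, not $n^{-2}\min(k,n-k)^{-1}$. The two agree for $k\asymp n$, but near the endpoints $k\approx 1$ and $k\approx n$ --- exactly where the smoothing degenerates --- the weaker (and, as far as I can see, the only justified) bound has square root summing to $C\,n^{-1/2}\log n$, a logarithmic loss over the claimed rate. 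You give no mechanism for recovering the stronger endpoint exponent.

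This endpoint degeneration is precisely why the proofs in the literature are not organized as a one-summand-at-a-time replacement: Rio's original argument proceeds differently (via smoothing/comparison inequalities and an induction in $n$), and Bobkov later gave an alternative entropic proof through Talagrand's transport inequality combined with an entropy jump along the CLT. Your honest flagging of the ``genuinely delicate points'' is appreciated, but those points are not technical residue to be delegated to the reference --- they are the theorem. As a citation (which is all the paper requires here) the proposal is fine; as a standalone proof sketch it has a genuine gap at the per-step $\Wcal_2$ estimate.
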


Since our parameter $T$ is not an integer, we adapt this result in the following lemma in order to obtain a similar upper bound real-valued parameters.
\begin{lem}
There exists a constant $C$ such that for all $a>0$, if $X$ stands for a Poisson random variable with parameter $a$, then 
    \begin{align*}
        \Wcal_2^2 \left(\frac{X-a}{\sqrt{a}}, \Ncal(0,1) \right) \leq \frac{C}{a}.
    \end{align*}
\end{lem}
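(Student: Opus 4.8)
The plan is to reduce the statement for a general real parameter $a>0$ to the integer case already available in Lemma~\ref{lem: maj Rio}. Write $a = n + r$ with $n := \lfloor a \rfloor \in \NN$ and $r \in [0,1)$. By the infinite divisibility of the Poisson law, we may realize $X \overset{d}{=} X_1 + \cdots + X_n + X'$, where $X_1,\dots,X_n$ are i.i.d.\ Poisson with parameter $1$, $X'$ is Poisson with parameter $r$, and all are independent. We must, however, be slightly careful: the target variance is $1$, and we are dividing by $\sqrt a$, so it is cleanest to treat separately the regime $a \geq 1$ (so $n \geq 1$) and the regime $0 < a < 1$.

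For $0 < a < 1$ the bound $C/a$ is large (at least $C$), so the inequality is essentially trivial: any coupling gives $\Wcal_2^2\big(\frac{X-a}{\sqrt a}, \Ncal(0,1)\big) \leq 2\,\E{(X-a)^2/a} + 2\,\E{Z^2} = 2 + 2 = 4$ for $Z\sim\Ncal(0,1)$ (using the independent coupling and $\Var{X} = a$), and $4 \leq C/a$ as soon as $C \geq 4$. So assume $a \geq 1$, hence $n \geq 1$. The idea is to compare $\frac{X-a}{\sqrt a}$ with a Gaussian by going through the intermediate object built from the $n$ i.i.d.\ pieces. Set $S_n := \sum_{i=1}^n (X_i - 1)$ and note $X - a = S_n + (X' - r)$. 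Let $Z \sim \Ncal(0,1)$ and $Z' \sim \Ncal(0,1)$ be independent, independent of everything else. Using the triangle inequality for $\Wcal_2$ and the elementary fact that $\Wcal_2^2(\nu_1 * \mu, \nu_2 * \mu') \leq \big(\Wcal_2(\nu_1,\nu_2) + \Wcal_2(\mu,\mu')\big)^2$ for convolutions (couple componentwise), one gets
\begin{align*}
    \Wcal_2\!\left(\frac{X-a}{\sqrt a}, \Ncal(0,1)\right)
    &= \Wcal_2\!\left(\frac{S_n}{\sqrt a} + \frac{X'-r}{\sqrt a},\ \frac{\sqrt n}{\sqrt a} Z + \frac{\sqrt{a-n}}{\sqrt a} Z'\right) \\
    &\leq \frac{\sqrt n}{\sqrt a}\,\Wcal_2\!\left(\frac{S_n}{\sqrt n}, \Ncal(0,1)\right) + \Wcal_2\!\left(\frac{X'-r}{\sqrt a},\ \frac{\sqrt{a-n}}{\sqrt a} Z'\right).
\end{align*}
The first term is controlled by Lemma~\ref{lem: maj Rio} with $\sigma^2 = 1$ and $\E{(X_1-1)^4}$ an absolute constant (the fourth central moment of a Poisson$(1)$), giving a bound $\leq C'/\sqrt n \leq C''/\sqrt a$ since $n \geq a/2$ for $a \geq 1$. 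For the second (remainder) term, $a - n = r \in [0,1)$, and since both $\frac{X'-r}{\sqrt a}$ and $\frac{\sqrt r}{\sqrt a}Z'$ have variance $r/a \leq 1/a$, the independent coupling already yields $\Wcal_2^2 \leq 2\,\frac{r}{a} + 2\,\frac{r}{a} \leq 4/a$, i.e.\ a bound $\leq 2/\sqrt a$. Squaring the displayed inequality (using $(x+y)^2 \leq 2x^2 + 2y^2$) gives $\Wcal_2^2 \leq C/a$, as required.

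The main obstacle — really the only point needing care — is bookkeeping the variances so that the Gaussian on the right-hand side is split as $\frac{\sqrt n}{\sqrt a}Z + \frac{\sqrt{a-n}}{\sqrt a}Z'$, which indeed has the correct total variance $1$; this is what makes the convolution/triangle-inequality step legitimate and lets us invoke Lemma~\ref{lem: maj Rio} on the normalized sum $S_n/\sqrt n$ rather than $S_n/\sqrt a$. One should also record explicitly the two elementary $\Wcal_2$ facts used: subadditivity under convolution, and the crude bound $\Wcal_2^2(\nu_1,\nu_2) \leq 2\int x^2\,d\nu_1 + 2\int x^2\,d\nu_2$ when both measures are centered. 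Everything else is the fourth-moment computation for Poisson$(1)$, which is a fixed finite number and contributes only to the universal constant $C$.
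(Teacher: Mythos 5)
Your proof is correct, but it follows a genuinely different route from the paper. You split $a = n + r$ with $n = \lfloor a \rfloor$ and $r \in [0,1)$, decompose $X$ into $n$ i.i.d.\ Poisson$(1)$ pieces plus a Poisson$(r)$ remainder, and then invoke $\Wcal_2$-subadditivity under independent convolution together with a crude variance bound on the fractional part; this forces a separate (trivial) treatment of $a<1$. The paper instead exploits the full flexibility of infinite divisibility: it writes $X$ as a sum of $n := \lfloor a\rfloor + 1$ i.i.d.\ Poisson$(a/n)$ variables, sets $Y_k = \frac{\sqrt n}{\sqrt a}\bigl(X_k - \tfrac{a}{n}\bigr)$, and notes that $\frac{X-a}{\sqrt a}$ is \emph{exactly} equal in distribution to $\frac{1}{\sqrt n}\sum_{k=1}^n Y_k$ with the $Y_k$ centered of variance $1$. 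Lemma~\ref{lem: maj Rio} then applies directly, with a one-line fourth-moment computation $\E{Y_1^4} = \tfrac{n}{a} + 3 \leq 4$ for $n \geq a$; no triangle inequality, no convolution lemma, no case split. Your approach trades that fourth-moment computation for a fixed Poisson$(1)$ constant, which is a modest gain, but it pays with the extra bookkeeping around the remainder and the small-$a$ case. One cosmetic slip: after you pull out the factor $\sqrt n/\sqrt a$ in front of $\Wcal_2\bigl(S_n/\sqrt n,\Ncal(0,1)\bigr)$, the product is already $\lesssim 1/\sqrt a$ without appealing to $n \geq a/2$; that extra inequality is unnecessary.
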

\begin{proof}
    Let $n\in \NN^\ast$ to be fix latter and $(X_k)_{k=1,\dots,n}$ i.i.d. Poisson random variables with parameter $\frac{a}{n}$. For any $k$, we set $Y_k := \frac{\sqrt{n}}{\sqrt{a}}\left( X_k-\frac{a}{n}\right).$ Then $Y:= \frac{X-a}{\sqrt{a}}$ and $S_n :=\frac{1}{\sqrt{n}}\sum_{k=1}^n Y_k$ have the same distribution. Thus according to Lemma \ref{lem: maj Rio}, there exists a universal constant (independent of $a$ and $n$ such that
     $$\Wcal_2^2 \left( \frac{1}{\sqrt{n}} \sum_{i=1}^n Y_i, \Ncal(0,1)\right) \leq C\frac{\E{Y_1^4}}{n}\leq \frac{C}{a}\left(1+\frac{3a}{n}\right).$$
    Taking $n= \lfloor a \rfloor +1 \geq a$ yields
    $$  \Wcal_2^2 \left( \frac{1}{\sqrt{n}} \sum_{i=1}^n Y_i, \Ncal(0,1)\right) \leq \frac{C}{a}. $$
\end{proof}
\begin{cor}
\label{cor: maj Rio}
    There exists $C>0$ independent of $T$ and $k$ such that for any $(i,j)\in \NN^2$, 
    $$\E{\left| W(I_{i,k}\times I_{j,k} ) -\Ntil^T(I_{i,k}\times I_{j,k}) \right|^2} \leq \frac{C}{T^2} $$
    where $I_{i,k} = \left( t^k_i, t^k_{i+1}\right]$.
\end{cor}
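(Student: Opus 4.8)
The plan is to observe that the rectangular increment of $W$ over the box $I_{i,k}\times I_{j,k}$ collapses to a single Gaussian variable $\xi_{i+1,j+1}$, so that the quantity to bound is exactly the squared $\LL^2$-distance between $\xi_{i+1,j+1}$ and $\Ntil^T(I_{i,k}\times I_{j,k}) = \Delta^k_{(i+1,j+1)}\Ntil^T$; since these two variables are coupled comonotonically this is a $2$-Wasserstein distance between a rescaled centered Poisson variable and a Gaussian, which the preceding Lemma controls.

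First I would read off from \eqref{eq: drap}, evaluated at $(s,t)=(t^k_{i+1},t^k_{j+1})$ (so that the relevant block index is $(i+1,j+1)$, which is admissible since $i+1,j+1\geq 1$), that
$$ W(I_{i,k}\times I_{j,k}) = \beta_{i+1,j+1}\!\left(\tfrac1k,\tfrac1k\right) + k^2\cdot\tfrac1k\cdot\tfrac1k\,\xi_{i+1,j+1}. $$
The covariance defining $\beta_{i+1,j+1}$ gives $\E{\beta_{i+1,j+1}(1/k,1/k)^2} = \tfrac1{k^2} - k^2\tfrac1{k^4} = 0$, hence $\beta_{i+1,j+1}(1/k,1/k)=0$ almost surely and therefore $W(I_{i,k}\times I_{j,k}) = \xi_{i+1,j+1}$. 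On the other hand, by definition of the rectangular increments $\Ntil^T(I_{i,k}\times I_{j,k}) = \Delta^k_{(i+1,j+1)}\Ntil^T$, and by construction $\xi_{i+1,j+1} = h_k(\Delta^k_{(i+1,j+1)}\Ntil^T, U_{i+1,j+1})$ with $h_k$ as in \eqref{def: methode fonction inverse}.

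Next I would use that the inverse-CDF construction \eqref{def: methode fonction inverse} realizes the quantile (comonotone) coupling between the law of $\Delta^k_{(i+1,j+1)}\Ntil^T$ and $\Ncal(0,1/k^2)$ (Remark \ref{rem: comonotonicity}); since $\Ncal(0,1/k^2)$ is atomless this coupling is optimal for the quadratic cost (Theorem 2.18 in \cite{villani_topics_2003}), so that
$$ \E{\left| W(I_{i,k}\times I_{j,k}) - \Ntil^T(I_{i,k}\times I_{j,k}) \right|^2} = \E{\left|\xi_{i+1,j+1} - \Delta^k_{(i+1,j+1)}\Ntil^T\right|^2} = \Wcal_2^2\!\left(\Delta^k_{(i+1,j+1)}\Ntil^T,\, \Ncal\!\left(0,\tfrac1{k^2}\right)\right). $$

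Finally, recalling that $TN^T$ is a Poisson measure with intensity $T^2\,ds\,d\theta$, the variable $\Delta^k_{(i+1,j+1)}\Ntil^T$ has the law of $\tfrac1T(P-a)$ with $P\sim\mathrm{Poisson}(a)$ and $a:=T^2/k^2$; since $\sqrt a/T=1/k$, this is the law of $\tfrac1k\cdot\tfrac{P-a}{\sqrt a}$, so by the scaling property $\Wcal_2^2(\lambda Z,\lambda Z') = \lambda^2\Wcal_2^2(Z,Z')$ and the preceding Lemma,
$$ \Wcal_2^2\!\left(\Delta^k_{(i+1,j+1)}\Ntil^T,\, \Ncal\!\left(0,\tfrac1{k^2}\right)\right) = \frac1{k^2}\,\Wcal_2^2\!\left(\frac{P-a}{\sqrt a},\, \Ncal(0,1)\right) \leq \frac1{k^2}\cdot\frac Ca = \frac{C}{T^2}, $$
the two factors $k^2$ cancelling, which yields the claim with $C$ independent of $T$ and $k$. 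The only genuinely delicate points are the identification in the first step (the pinned sheet vanishes at the corner, so $W(I_{i,k}\times I_{j,k})$ really is $\xi_{i+1,j+1}$) and the recognition in the second step that the construction of $W$ was tailored precisely so that the coupled increments form a $\Wcal_2$-optimal pair; once these are in place the rest is bookkeeping.
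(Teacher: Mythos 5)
Your proof is correct and follows the same route as the paper's: reduce to the $\Wcal_2$ bound for a rescaled centered Poisson variable (the preceding lemma) and use that the coupled pair of increments is comonotone, hence $\Wcal_2$-optimal. You are actually a bit more careful than the paper here, since you explicitly verify that the pinned sheet's contribution $\beta_{i+1,j+1}(1/k,1/k)$ has variance zero and hence vanishes almost surely, so that $W(I_{i,k}\times I_{j,k})=\xi_{i+1,j+1}=h_k(\Delta^k_{(i+1,j+1)}\Ntil^T,U_{i+1,j+1})$ is indeed the quantile coupling of $\Ntil^T(I_{i,k}\times I_{j,k})$ with a $\Ncal(0,1/k^2)$; the paper's proof takes this identification for granted when it invokes optimality of the comonotonic coupling.
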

\begin{proof}
    Recall that $\Ntil^T(I_{i,k}\times I_{j,k})= \frac{1}{T} \left( X- \frac{T^2}{k^2}\right) $ 
(in law) where $X$ is a Poisson random variable with parameter $\frac{T^2}{k^2}.$ Thus, by the previous lemma,
$$ \Wcal_2^2 \left( k \Ntil^T(I_{i,k}\times I_{j,k}), \Ncal(0,1)\right)  \leq \frac{Ck^2}{T^2} .$$
Then, since $\frac{1}{k}\Ncal(0,1)$ and $W(I_{i,k}\times I_{j,k} )$ have the same distribution and using the definition of the $\xi_{i,j}$ (which allows us to construct a comonotonic coupling bridging the Wasserstein distance with the $\LL^2$ norm according to Theorem 2.18 in \cite{villani_topics_2003}), we derive 
$$ \E{\left| W(I_{i,k}\times I_{j,k} ) -\Ntil^T(I_{i,k}\times I_{j,k}) \right|^2} = \Wcal_2^2 \left( \Ntil^T(I_{i,k}\times I_{j,k}), W(I_{i,k}\times I_{j,k}) \right)  \leq \frac{C}{T^2} .$$
\end{proof}

According to Remark \ref{rem: filtration}, we introduce the following notation:

\begin{notation}
\label{not: discretized}
    For any process $f^T$, we will denote by $\bar{f}^T$ its càg-làd discretized version along the subdivision $\left( t^k_i\right)_{i=0,\dots, k}=\left( \frac{i}{k}\right)_{i=0,\dots, k}$, that is:
    \begin{equation}
    \label{eq: def lambda discret}
    \bar{f}^T_t := \sum_{i=0}^{k-1} \1_{I_{i,k}}(t)f^T_{t_i^k} .
    \end{equation}
\end{notation}
Note that the discretized version of any $\FF^T$-predictable process is a $\FF$-predictable process with $\FF^T= (\Fcal^N_{tT})_{t\geq 0}$ the natural history of $N^T$ and $\FF$ as defined in \eqref{eq: filtration}.
\begin{prop}\label{prop:maj-intw-nT}
    Let $f\in \LL^{\infty}(0,1)$ and $(u^T_s)_{s\in [0,1]}$ be a positif $\FF^T$-predictable process, such that:
    $$\sup_{s\in [0,1]}\E{u^T_s}<+\infty . $$
    Then, there exists $C>0$ such that for any $T>1$,
    \begin{align*}
    \hspace*{-1cm}&\E{\left|\iint_{(0,1]\times\RR_+} \bar{f}(s) \1_{\theta \leq \bar{u}^T_s}\left[W(ds,d\theta ) -\Ntil^T(ds,d\theta)\right] \right|^2} \leq \left\| f\right\|_{\LL^{\infty}(0,1)}^2 \left(\frac{1}{k} + \frac{1}{T^2} \left(k^2+3 \right)  \sup_{s \in [0,1]}\E{ u^T_s} \right).
    \end{align*}
    where $\bar{f}$ and $\bar{u}^T$ stand for the $\FF$-predictable (with $\FF$ defined in \eqref{eq: filtration}) discretized version of $f$ and $u^T$.
\end{prop}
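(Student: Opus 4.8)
The plan is to exploit that the integrand $\bar f(s)\1_{\theta\leq\bar u^T_s}$ is piecewise constant in time along the grid $(t^k_i)_i$, so that the double stochastic integral collapses to a finite sum of one-dimensional rectangular increments; then to orthogonalise these increments by conditioning along the ``jumping'' filtration $\FF$; and finally to estimate each summand either by the comonotone-coupling bound of Corollary~\ref{cor: maj Rio} or by an elementary variance computation. Concretely, since $\bar f(s)\1_{\theta\leq\bar u^T_s}=f(t^k_i)\1_{\theta\leq u^T_{t^k_i}}$ for $s\in I_{i,k}$ by Notation~\ref{not: discretized}, and writing $W(I_{i,k}\times[0,a])$, $\Ntil^T(I_{i,k}\times[0,a])$ for rectangular increments,
\begin{equation*}
\iint_{(0,1]\times\RR_+}\bar f(s)\1_{\theta\leq\bar u^T_s}\left[W(ds,d\theta)-\Ntil^T(ds,d\theta)\right]=\sum_{i=0}^{k-1}f(t^k_i)\,D_i,\qquad D_i:=W(I_{i,k}\times[0,u^T_{t^k_i}])-\Ntil^T(I_{i,k}\times[0,u^T_{t^k_i}]).
\end{equation*}

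Here $f(t^k_i)$ and $u^T_{t^k_i}$ are $\Fcal_{t^k_i}$-measurable, whereas, by the incremental construction \eqref{eq: drap} of $W$ and the definition \eqref{eq: filtration} of $\FF$, the rectangular increments of $W$ and of $\Ntil^T$ over the vertical strip $I_{i,k}\times\RR_+$ are centred and independent of $\Fcal_{t^k_i}$; this is precisely the role of the time-discretisation, since the coupling ties the strip-$i$ increments of $W$ to $\Ntil^T(I_{i,k}\times\cdot)$, which only becomes $\FF$-measurable at $t^k_{i+1}$. Consequently, for $i<i'$ the quantity $f(t^k_i)f(t^k_{i'})D_i$ is $\Fcal_{t^k_{i'}}$-measurable and $\E{D_{i'}\mid\Fcal_{t^k_{i'}}}=0$, so every cross-term vanishes and
\begin{equation*}
\E{\left|\sum_{i=0}^{k-1}f(t^k_i)D_i\right|^2}=\sum_{i=0}^{k-1}\E{f(t^k_i)^2\,\E{D_i^2\mid\Fcal_{t^k_i}}}\leq\|f\|_{\LL^{\infty}(0,1)}^2\sum_{i=0}^{k-1}\E{g(u^T_{t^k_i})},
\end{equation*}
where $g(v):=\E{\big(W(I\times[0,v])-\Ntil^T(I\times[0,v])\big)^2}$ for any cell $I$ of length $1/k$ (independent of $I$ by stationarity of the increments).

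It remains to bound $g(v)$. With $m:=\lfloor kv\rfloor$ and $w:=v-m/k\in[0,1/k)$, split $[0,v]$ in the $\theta$-variable into the full cells $I_{0,k},\dots,I_{m-1,k}$ and the leftover piece $(m/k,v]$. On a full cell, $\Var{\beta_{i,j}(1/k,1/k)}=0$ forces $W(I_{i,k}\times I_{j,k})=\xi_{i,j}$, which is comonotonically coupled with $\Ntil^T(I_{i,k}\times I_{j,k})$; Corollary~\ref{cor: maj Rio} (equivalently, Rio's inequality applied to $k\,\Ntil^T(I_{i,k}\times I_{j,k})$, whose fourth moment is $k^2/T^2+3$) gives $\E{(W(I_{i,k}\times I_{j,k})-\Ntil^T(I_{i,k}\times I_{j,k}))^2}\leq(k^2+3)/(k^2T^2)$, and these $m$ increments are mutually independent and centred, hence orthogonal. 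On the leftover piece, $W(I_{i,k}\times(m/k,v])=\beta_{i,m}(1/k,w)+kw\,\xi_{i,m}$ and $\Ntil^T(I_{i,k}\times(m/k,v])$ both have variance $w/k$, and an elementary second-moment computation bounds $\E{(W(I_{i,k}\times(m/k,v])-\Ntil^T(I_{i,k}\times(m/k,v]))^2}$ by $1/k^2$; since this term depends only on the data attached to cell $(i,m)$, it is independent of, hence orthogonal to, the full-cell sum. Collecting these and using $m\leq kv$ gives $g(v)\leq(k^2+3)v/(kT^2)+1/k^2$. Inserting this into the previous display and using that there are $k$ strips and $\sum_{i=0}^{k-1}\E{u^T_{t^k_i}}\leq k\sup_{s\in[0,1]}\E{u^T_s}$ yields the claimed bound $\|f\|_{\LL^{\infty}(0,1)}^2\big(1/k+(k^2+3)T^{-2}\sup_{s\in[0,1]}\E{u^T_s}\big)$.

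The step I expect to be the main obstacle is the orthogonality of the $D_i$: one must verify carefully, from \eqref{eq: drap} and the ``jumping'' structure of $\FF$ in \eqref{eq: filtration}, that the discretised integrand is genuinely $\FF$-predictable and that $W-\Ntil^T$ still ``integrates like a martingale measure'' against it along the grid --- even though, as stressed after Theorem~\ref{thm: Couplage}, $W$ and $\Ntil^T$ are not co-adapted in continuous time. Everything else reduces to Corollary~\ref{cor: maj Rio} combined with elementary second-moment identities for Gaussian and centred-Poisson rectangular increments.
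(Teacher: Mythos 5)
Your proof is correct, and it follows the same overall strategy as the paper's: reduce the stochastic integral to a sum over the vertical strips $I_{i,k}\times\RR_+$, kill the cross-terms by conditioning on $\Fcal_{t_i^k}$ (using that the strip-$i$ increments of both $W$ and $\Ntil^T$ are centred and independent of $\Fcal_{t_i^k}$ while $u^T_{t_i^k}$ is $\Fcal_{t_i^k}$-measurable), and apply Corollary \ref{cor: maj Rio} cell by cell together with the bound $m\le k\,u^T_{t_i^k}$ on the number of full cells. The genuine difference lies in how the random truncation level in the $\theta$-variable is handled. The paper introduces a second, $\theta$-discretized integrand $U^T_k$ (truncating at the grid point $t^k_{j_i^*+1}$), bounds $\E{\int |U^T-U^T_k|^2\,d\theta\,ds}\le C\|f\|_{\LL^\infty(0,1)}^2/k$ via the $\LL^2$ isometries for $W$ and $\Ntil^T$ separately, and only then sums full-cell differences up to the random index $j_i^*$. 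You keep the exact level and compute the conditional second moment $g(u^T_{t_i^k})$ directly, splitting $[0,u^T_{t_i^k}]$ into full cells plus a partial top cell whose variance $O(1/k^2)$ per strip is what produces the $1/k$ term. Your route is slightly leaner (one decomposition instead of two) and has the side benefit of exhibiting where the factor $k^2+3$ in the statement comes from, namely the fourth moment of $k\,\Ntil^T(I_{i,k}\times I_{j,k})$. Two minor caveats, neither affecting correctness: Rio's inequality (Lemma \ref{lem: maj Rio}) carries a universal constant, so the per-cell bound is really $C(k^2+3)/(k^2T^2)$ and the final estimate holds up to the constant $C$ announced in the statement (the paper's own proof has the same feature); and inside your computation of $g(v)$ the orthogonality between the partial-cell increment $\beta_{i,m}(1/k,w)+kw\,\xi_{i,m}-\Ntil^T(I_{i,k}\times(m/k,v])$ and the full-cell sum deserves the one-line justification you gesture at, namely that the data $(\beta_{i,j},U_{ij},\Delta^k_{(i,j)}\Ntil^T)$ attached to distinct cells are mutually independent and each cell difference is centred.
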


\begin{proof}
    Let $U^T$ and $U^T_k$ the predictable processes defined by
    \begin{equation}
    \label{def: U^T et U^T_k}
        U^T(s,\theta):=\bar{f}(s)\1_{\theta \leq \bar{u}^T_s}\quad \text{and} \quad U^T_k(s,\theta):=\sum_{i=0}^k \sum_{j=0}^{\infty} f(t^k_i) \1_{I_{i,k}\times I_{j,k}}(s,\theta) \1_{t_{j}^k \leq u_{t_i^k}^T}.
    \end{equation}
    For $i\in \{ 0,\dots,k-1 \}$, we define by $j_i^\ast$ the index of the last time $t_j^k$ such that $t_j^k \leq u^T_{t_i^k}$, i.e.
    \begin{align*}
    j_i^\ast=\max\{j\in \{ 0,\dots,k-1 \} \mid t_j^k \leq u^T_{t_i^k}\}.
    \end{align*}
    Since $u^T$ is a predictable process,  $j_i^\ast$ is a $\Fcal_{t_i^k}$ (see \eqref{eq: filtration}) random variable with integer values. Moreover, by definition, we have the following inequality:
        $$t_{j^*_i}^k \leq u^T_{t_i^k} < t_{j^*_i}^k + k^{-1}.$$
    Hence, we can rewrite $U^T_k$ using $j_i^\ast$ as below
    \begin{equation}
    \label{def: U^T_k}
        U^T_k(s,\theta)=\sum_{i=0}^k f(t^k_i)\1_{I_{i,k}\times (0,t_{j^*_i +1}^k]}(s,\theta).
    \end{equation}
    Using the definition of $U^T_k$ and $U^T$, we have
    \begin{align*}
        &\E{\left|\iint_{(0,1]\times\RR_+}\bar{f}^T(s) \1_{\theta \leq \bar{u}^T_s}\left[W(ds,d\theta ) -\Ntil^T(ds,d\theta)\right] \right|^2}\\
        &\leq C  \E{\left|\iint_{(0,1]\times\RR_+}\left[U^T(s) -U^T_k(s,\theta)\right] W(ds,d\theta)\right|^2}\\
        &+C \E{\left|\iint_{(0,1]\times\RR_+}\left[U^T(s) -U^T_k(s,\theta)\right] \Ntil^T(ds,d\theta)\right|^2}\\
        &+ C \E{\left|\iint_{(0,1]\times\RR_+}U^T_k(s,\theta) \left[W(ds,d\theta ) -\Ntil^T(ds,d\theta)\right]\right|^2}.
    \end{align*}
    And so,
    \begin{align}
        &\E{\left|\iint_{(0,1]\times\RR_+} \1_{\theta \leq \bar{u}^T_s}\left[W(ds,d\theta ) -\Ntil^T(ds,d\theta)\right] \right|^2}\nonumber \\
        &\leq C  \E{\int_0^1\int_{\RR_+}\left|U^T(s) -U^T_k(s,\theta)\right|^2 d\theta ds}\label{eq: maj_couplage} \\
        &+ C \E{\left|\iint_{(0,1]\times\RR_+} U^T_k(s,\theta) \left[W(ds,d\theta ) -\Ntil^T(ds,d\theta)\right]\right|^2}. \nonumber
    \end{align}
    On the one hand, we have:
    \begin{align*}
        &\E{ \int_0^1 \int_{\RR_+}\left|U^T(s,\theta)-U^T_k(s,\theta)\right| ^2 d\theta ds}\\
        =&\sum_{i=0}^{k-1} \left|f(t^k_i) \right|^2 \int_{t_i^k}^{t_{i+1}^k}\int_{\RR_+}\E{\left|\1_{\theta \leq u^T_{t^k_i}}- \1_{I_i^k\times [0,t^k_{j^*_i +1}]}(s,\theta)\right|^2 } d\theta ds .
    \end{align*}
    Thus, 
    \begin{align*}
     \E{ \int_0^1 \int_{\RR_+}\left|U^T(s,\theta)-U^T_k(s,\theta)\right| ^2 d\theta ds} &\leq \left\|f \right\|_{\LL^{\infty}(0,1)}^2\sum_{i=0}^{k-1} \int_{t_i^k}^{t_{i+1}^k} \E{\left|u^T_{t_i^k}-t^k_{j^*_i +1}\right|} ds\\
     &\leq \left\|f \right\|_{\LL^{\infty}(0,1)}^2 \sum_{i=0}^{k-1} \int_{t_i^k}^{t_{i+1}^k} \left( \E{\left|u^T_{t_i^k}-t^k_{j^*_i}\right|} +k^{-1}\right) ds.
    \end{align*}
    Then, by definition of $t_{j^*_i}^k$, we have:
    \begin{equation}
    \label{eq: couplage_maj_1}
     \E{ \int_0^1 \int_{\RR_+}\left|U^T(s,\theta)-U^T_k(s,\theta)\right| ^2 d\theta ds} \leq  C\frac{\left\|f \right\|_{\LL^{\infty}(0,1)}^2}{k}.
    \end{equation}
    One the other hand, using \eqref{def: U^T_k}, we observe that
    \begin{align*}
        &\hspace*{-1.5cm}\E{\left|\iint_{I_{i,k}\times \RR_+}  U^T_k(s,\theta)\left(W(ds,d\theta ) -\Ntil^T(ds,d\theta)\right)\right|^2} =\E{\left|\iint_{I_{i,k}\times (0, t_{j_i^\ast}^k]}  f(t^k_i)\left[W(ds,d\theta ) -\Ntil^T(ds,d\theta)\right]\right|^2}.
    \end{align*}
    Hence,
    \begin{align*}
    \hspace*{-1.3cm}\E{\left|\iint_{I_{i,k}\times \RR_+}  U^T_k(s,\theta)\left(W(ds,d\theta ) -\Ntil^T(ds,d\theta)\right)\right|^2} &= \E{ \left|f(t^k_i) \sum_{\ell=0}^{j_i^\ast} W\left(I_{i,k}\times I_{\ell,k}\right) - \Ntil^T\left(I_{i,k}\times I_{\ell,k}\right) \right|^2 }.
    \end{align*}
    Using the fact that the rectangular increments $\left(W\left(I_{i,k}\times I_{\ell,k}\right) - \Ntil^T\left(I_{i,k}\times I_{\ell,k}\right) \right)_{(i,\ell)\in \NN^2}$ are independent, we have
    \begin{align*}
        \hspace*{-1.5cm}\E{\left|\iint_{I_{i,k}\times \RR_+} U^T_k(s,\theta) \left(W(ds,d\theta ) -\Ntil^T(ds,d\theta)\right)\right|^2} &\leq \left\| f\right\|_{\LL^{\infty}(0,1)}^2\E{\sum_{\ell=0}^{j_i^*}\left|W\left(I_{i,k}\times I_{\ell,k}\right) - \Ntil^T\left(I_{i,k}\times I_{\ell,k}\right) \right|^2}.
    \end{align*}
    Moreover, since $j_i^*$ is $\Fcal_{t_i^k}$ measurable and since $\left( W\left(I_{i,k}\times I_{\ell,k}\right) - \Ntil^T\left(I_{i,k}\times I_{\ell,k}\right)\right)_{\ell\in \NN}$ is independent of $\Fcal_{t_i^k}$, we have
    \begin{align*}
        \E{\sum_{\ell=0}^{j_i^*}\left|W\left(I_{i,k}\times I_{\ell,k}\right) - \Ntil^T\left(I_{i,k}\times I_{\ell,k}\right) \right|^2} &= \E{\sum_{\ell=0}^{j_i^*}\EE_{t_i^k}\left[\left|W\left(I_{i,k}\times I_{\ell,k}\right) - \Ntil^T\left(I_{i,k}\times I_{\ell,k}\right) \right|^2\right]}\\
        &=\E{\sum_{\ell=0}^{j_i^*}\E{\left|W\left(I_{i,k}\times I_{\ell,k}\right) - \Ntil^T\left(I_{i,k}\times I_{\ell,k}\right) \right|^2}}.
    \end{align*}
    Hence, by Corollary \ref{cor: maj Rio}, we compute
        \begin{equation}
        \E{\sum_{\ell=0}^{j_i^*}\left|W\left(I_{i,k}\times I_{\ell,k}\right) - \Ntil^T\left(I_{i,k}\times I_{\ell,k}\right) \right|^2} \leq \frac{C}{T^2} \E{j_i^*} .
    \end{equation}
    From the very definition of $j_i^*$ one have 
        $$ t_{j^*_i}^k \leq  u^T_{t_i^k} \leq t_{j^*_i}^k + k^{-1} \quad \text{and} \quad j^*_i \leq k u^T_{t_i^k}.$$
    Thus, 
  \begin{align*}
        \E{\left|\iint_{I_{i,k}\times \RR_+} U^T_k(s,\theta)  \left(W(ds,d\theta ) -\Ntil^T(ds,d\theta)\right)\right|^2} &\leq C\frac{\left\| f\right\|_{\LL^{\infty}(0,1)}^2}{T^2} k \E{u^T_{t_i^k}} \\
        &\leq C\frac{\left\| f\right\|_{\LL^{\infty}(0,1)}^2}{T^2} k  \sup_{s \in [0,1]}\E{u^T_s} .
    \end{align*}
    Adding these inequalities for $i=0,\dots, k-1$ and using the fact that the random variables\\ $ \left(\iint_{I_{i,k}\times \RR_+} U^T_k(s,\theta) \left[W(ds,d\theta ) -\Ntil^T(ds,d\theta)\right]\right)_{i=0,\dots, k-1}$ are independent, we proved that
    \begin{equation}
    \label{eq: couplage_maj_2}
          \E{\left|\iint_{(0,1]\times \RR_+}U^T_k(s,\theta) \left(W(ds,d\theta ) -\Ntil^T(ds,d\theta)\right)\right|^2} 
          \leq C\frac{\left\| f\right\|_{\LL^{\infty}(0,1)}^2}{T^2} k^2  \sup_{s \in [0,1]}\E{ u^T_s}.
    \end{equation}
    Gathering \eqref{eq: couplage_maj_1} and \eqref{eq: couplage_maj_2} in \eqref{eq: maj_couplage}, we finally have:
        \begin{align*}
           \E{\left|\iint_{(0,1]\times \RR_+} f^T(s)\1_{\theta \leq \bar{u}^T_s}\left[W(ds,d\theta ) -\Ntil^T(ds,d\theta)\right] \right|^2} &\leq C \left\| f\right\|_{\LL^{\infty}(0,1)}^2 \left(\frac{1}{k} + \frac{k^2}{T^2}   \sup_{s \in [0,1]}\E{ u^T_s} \right).
        \end{align*}
\end{proof}

We end this section with Theorem \ref{thm: maj_sup_int_NTW}. In the latter, we control the supremum norm of stochastic integrals which appear in Proposition \ref{prop:maj-intw-nT}.
\begin{theorem}
\label{thm: maj_sup_int_NTW}
Let $f$ be a differentiable function with continuous derivative on $[0,1]$ and $u^T$ be a positive predictable process for the filtration $\FF^{T}$ such that 
$$\sup_{s\in [0,1]}\E{\left|u^T_s \right|^2}<C.$$
Then, there exists $C>0$ such that for any $T>1$,
    \begin{equation}
        \E{\sup_{t\in [0,1]} \left|\iint_{(0,t]\times \RR_+} f(s) \1_{\theta \leq \bar{u}^T_s} \left(\Ntil^T-W \right)(ds,d\theta) \right|^2 } \leq C \left(\frac{1}{k}+ \frac{k^2}{T^2} \right).
    \end{equation}
\end{theorem}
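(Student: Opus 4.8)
The strategy is to reduce the supremum-in-$t$ statement to the already-established $L^2$ bound of Proposition~\ref{prop:maj-intw-nT} by decomposing the integrand into pieces adapted to the jumping filtration $\FF$ and applying Doob's maximal inequality to the resulting martingales. First I would introduce the same discretized integrand $U^T_k(s,\theta)=\sum_{i} f(t^k_i)\1_{I_{i,k}\times(0,t^k_{j^*_i+1}]}(s,\theta)$ used in the proof of Proposition~\ref{prop:maj-intw-nT}, and split
$$\iint_{(0,t]\times\RR_+} f(s)\1_{\theta\le \bar u^T_s}(\Ntil^T-W)(ds,d\theta)=\iint_{(0,t]\times\RR_+}\bigl(U^T(s,\theta)-U^T_k(s,\theta)\bigr)(\Ntil^T-W)(ds,d\theta)+\iint_{(0,t]\times\RR_+}U^T_k(s,\theta)(\Ntil^T-W)(ds,d\theta),$$
so that it suffices to bound the supremum over $t$ of each term. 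For the second term, I would observe that $U^T_k$ is constant on each strip $I_{i,k}\times\RR_+$ with a $\Fcal_{t^k_i}$-measurable coefficient, so the process $t\mapsto\iint_{(0,t]\times\RR_+}U^T_k\,(\Ntil^T-W)(ds,d\theta)$ is an $\FF$-martingale: indeed, by Theorem~\ref{thm: Couplage} both $W$ and $\Ntil^T$ restricted to the grid increments have mean zero and their rectangular increments over disjoint strips are independent of the past, and discretizing in time is precisely what makes $U^T_k$ genuinely $\FF$-predictable. Doob's $L^2$ inequality then gives $\E{\sup_{t\in[0,1]}|\cdots|^2}\le 4\,\E{|\iint_{(0,1]\times\RR_+}U^T_k(\Ntil^T-W)|^2}$, and the right-hand side was already controlled by $C\|f\|_{\LL^\infty(0,1)}^2\,k^2/T^2\,\sup_s\E{u^T_s}$ in \eqref{eq: couplage_maj_2}; since $f$ has a continuous derivative on $[0,1]$ it is bounded there, and $\sup_s\E{u^T_s}\le(\sup_s\E{|u^T_s|^2})^{1/2}\le C$.

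For the first term, $\iint_{(0,t]\times\RR_+}(U^T-U^T_k)(\Ntil^T-W)(ds,d\theta)$, I would bound it separately over $\Ntil^T$ and over $W$ using the triangle inequality, then apply part (i) of Lemma~\ref{lem-outil-tilde-N-T} (for the $\Ntil^T$ part) and the analogous Burkholder--Davis--Gundy / Doob estimate for the Brownian-sheet stochastic integral (cf.\ the Walsh--El Karoui framework cited after \eqref{def: CIR}) to control $\E{\sup_{t}|\cdots|^2}$ by a constant times $\E{\int_0^1\int_{\RR_+}|U^T(s,\theta)-U^T_k(s,\theta)|^2\,d\theta\,ds}$, possibly plus a $T^{-2}$-order fourth-moment term that is negligible. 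The difference $U^T-U^T_k$ vanishes outside $\theta\le\max(u^T_{t^k_i},t^k_{j^*_i+1})$ and its $L^2$ mass was shown in \eqref{eq: couplage_maj_1} to be $\le C\|f\|_{\LL^\infty(0,1)}^2/k$ (using $|u^T_{t^k_i}-t^k_{j^*_i}|\le k^{-1}$ and $\sup_s\E{u^T_s}<\infty$; here one only needs the first-moment bound, which follows from the $L^2$ hypothesis). Collecting the two terms yields
$$\E{\sup_{t\in[0,1]}\left|\iint_{(0,t]\times\RR_+}f(s)\1_{\theta\le\bar u^T_s}(\Ntil^T-W)(ds,d\theta)\right|^2}\le C\left(\frac1k+\frac{k^2}{T^2}\right),$$
as claimed.

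The main obstacle is the subtle interplay between the ``jumping'' filtration $\FF$ of \eqref{eq: filtration} and the martingale property: one must be careful that $t\mapsto\iint_{(0,t]\times\RR_+}U^T_k(\Ntil^T-W)(ds,d\theta)$ really is an $\FF$-martingale despite the filtration releasing an entire strip's worth of information at each $t^k_i$, and that the stopping-time/optional-sampling arguments behind Doob's inequality apply in this discrete-jump setting; this is exactly why the theorem is stated for the discretized indicator $\1_{\theta\le\bar u^T_s}$ rather than $\1_{\theta\le u^T_s}$. A secondary technical point is justifying the maximal inequality for the Brownian-sheet integral against the non-co-adapted coupling: one should integrate against $W$ with its own natural (continuous) filtration, for which the integrand $U^T_k$ is predictable, and only afterwards compare with the $\Ntil^T$-integral strip by strip. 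Once these measurability issues are dispatched, the quantitative bounds are immediate from \eqref{eq: couplage_maj_1}, \eqref{eq: couplage_maj_2}, and Lemma~\ref{lem-outil-tilde-N-T}.
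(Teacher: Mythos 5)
The key idea — decompose into the fully discretized integrand $U^T_k$ and the remainder $U^T-U^T_k$, push the remainder through $L^2$-isometries and the $U^T_k$-piece through Doob and Proposition~\ref{prop:maj-intw-nT} — is in the right spirit, but there is a genuine gap in the $U^T_k$ step, and it is exactly at the point you flag as the ``main obstacle.''

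The process $t\mapsto\iint_{(0,t]\times\RR_+}U^T_k(s,\theta)\,(\Ntil^T-W)(ds,d\theta)$ is \emph{not} an $\FF$-martingale in continuous time, and in fact is not even $\FF$-adapted. For $t\in(t_i^k,t_{i+1}^k)$, the filtration defined in \eqref{eq: filtration} contains $W$-information up to time $t$ but $\Ntil^T$-information only up to $\lfloor tk\rfloor/k=t_i^k$; the $\Ntil^T$-integral $\iint_{(0,t]}U^T_k\,\Ntil^T$, which involves $\Ntil^T((t_i^k,t]\times\cdot\,)$, is therefore not $\Fcal_t$-measurable. Nor does passing to the co-adapted filtration $\Hcal_t=\sigma(W,\Ntil^T\text{ up to }t)$ rescue the argument: there the process is adapted, but the paper points out after Theorem~\ref{thm: Couplage} that $W$ fails to be an $\Hcal$-Brownian martingale measure, so the $W$-integral is not an $\Hcal$-martingale and the cancellation in $\Ntil^T-W$ is destroyed. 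Hence the claimed Doob bound $\E{\sup_{t\in[0,1]}|\cdot|^2}\le 4\,\E{|\cdot_1|^2}$ for the $U^T_k$-piece has no justification in continuous time; your last paragraph asserts the martingale property must be ``dispatched'' by optional-sampling arguments, but no such argument works because the obstruction is non-adaptedness, not optional sampling.

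The paper's proof resolves this by decomposing $\sup_{t\in[0,1]}|M_t|^2$ into \emph{two} pieces: a within-strip supremum $\max_i\sup_{t\in(t_i^k,t_{i+1}^k]}|M_t-M_{t_i^k}|^2$ and a grid-point maximum $\max_i|M_{t_i^k}|^2$. For the within-strip term (the $A^{(i)}_1$'s) they give up the cancellation, apply Doob separately to the $\Ntil^T$- and $W$-parts on each short strip (where the integrand is constant and $\Fcal_{t_i^k}$-measurable, so each is a genuine martingale in its own filtration), and then control the max over $i$ via the $L^4$ trick $\max_i a_i^2\le(\sum_i a_i^4)^{1/2}$; naively summing the $O(1/k)$ per-strip second moments would give a useless $O(1)$. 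For the grid-point term (the $A^{(i)}_2$'s, in particular $A^{(i)}_{2,3}$) the process \emph{is} a discrete $\FF$-martingale, discrete Doob applies, and Proposition~\ref{prop:maj-intw-nT} finishes. Your proposal captures the second step but omits the first entirely, so as written it does not yield the claimed supremum bound. Your treatment of the remainder $U^T-U^T_k$ (triangle inequality, isometries, \eqref{eq: couplage_maj_1}, with a minor modification for the $f(s)-f(t_i^k)$ oscillation) is fine, but it is the $U^T_k$-part that carries the essential difficulty.
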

\begin{proof}
    First, let us notice that 
    \begin{align*}
        &\sup_{t\in [0,1]} \left|\iint_{(0,t]\times \RR_+} f(s) \1_{\theta \leq \bar{u}^T_s} \left(\Ntil^T-W \right)(ds,d\theta) \right|^2 \\
        \leq& \left(\sum_{i=0}^{k-1} \sup_{t\in (t_i^k,t^k_{i+1}]} \left|\iint_{(t_i^k, t] \times \RR_+} f(s) \1_{\theta \leq \bar{u}^T_s} \left(\Ntil^T-W \right)(ds,d\theta) \right|^4\right)^{1/2} \\
        &+ \max_{i=1, \dots,k} \left|\iint_{(0, t^k_i] \times \RR_+} f(s) \1_{\theta \leq \bar{u}^T_s} \left(\Ntil^T-W \right)(ds,d\theta) \right|^2.
    \end{align*}
    Let us deal the terms separately. To do so, we denote for each $i$ by $A^{(i)}_1$ and $A^{(i)}_2$ the followings:
    \begin{align*}
        A^{(i)}_1 &:= \sup_{t\in (t_i^k,t^k_{i+1}]} \left|\iint_{(t_i^k, t] \times \RR_+} f(s) \1_{\theta \leq u^T_{t^k_i}} \left(\Ntil^T-W \right)(ds,d\theta) \right|^4, \\
        A^{(i)}_2 &:= \left|\iint_{(0, t^k_i] \times \RR_+} f(s) \1_{\theta \leq \bar{u}^T_s} \left(\Ntil^T-W \right)(ds,d\theta) \right|^2.
    \end{align*}
    \paragraph{Control of \texorpdfstring{$\E{A^{(i)}_1}$}{}}~\\
    Bu using the fact that $(a+b)^4 \leq C(a^4+b^4)$, we have:
    \begin{align*}
       \E{A^{(i)}_1} \leq & C \E{\sup_{t\in (t_i^k, t_{i+1}^k]} \left| \iint_{(t_i^k, t] \times \RR_+} f(s) \1_{\theta \leq u^T_{t^k_i}} \Ntil^T(ds,d\theta)\right|^4} \\
       &+ C\E{\sup_{t\in (t_i^k, t_{i+1}^k]} \left| \iint_{(t_i^k, t] \times \RR_+} f(s) \1_{\theta \leq u^T_{t^k_i}} W(ds,d\theta)\right|^4}.
    \end{align*}
    Hence, Doob's inequality yields
        \begin{align*}
       \hspace*{-1cm}\E{A^{(i)}_1} &\leq C \E{ \left| \iint_{(t_i^k, t_{i+1}^k] \times \RR_+} f(s) \1_{\theta \leq u^T_{t^k_i}} \Ntil^T(ds,d\theta)\right|^4} + C\E{\left| \iint_{(t_i^k, t_{i+1}^k] \times \RR_+} f(s) \1_{\theta \leq u^T_{t^k_i}} W(ds,d\theta)\right|^4}.
    \end{align*}
    Thus, by Lemma \ref{lem-outil-tilde-N-T}, we obtain
    \begin{align*}
       \E{A^{(i)}_1} &\leq C \E{ \left| \int_{t^k_i}^{t^k_{i+1}} f(s)^2 u^T_{t^k_i}ds\right|^2}+\frac{C}{T^2} \int_{t^k_i}^{t^k_{i+1}} f(s)^4 \E{u^T_{t^k_i}}ds + C\E{\left| \int_{t^k_i}^{t^k_{i+1}} f(s)^2 u^T_{t^k_i} ds \right|^2}.
    \end{align*}
    So, by using Jensen's inequality combined with $\sup_{t\in[0,1]} \E{\left|u^T_t \right|^2}\leq C$ and $\left\| f\right\|_{\LL^{\infty}(0,1)}<+\infty$, we get:
    \begin{align*}
       \E{A^{(i)}_1} &\leq \frac{C}{k} \int_{t^k_i}^{t^k_{i+1}} f(s)^4 \E{\left|u^T_{t^k_i}\right|^2} ds+\frac{C}{T^2} \int_{t^k_i}^{t^k_{i+1}} f(s)^4 \E{u^T_{t^k_i}}ds + \frac{C}{k}\int_{t^k_i}^{t^k_{i+1}} f(s)^4 \E{\left|u^T_{t^k_i}\right|^2} ds \\
       &\leq C\frac{\sup_{t\in [0,1]} \E{\left|u^T_t \right|^2}}{k} \int_{t^k_i}^{t^k_{i+1}} f(s)^4 ds+C\frac{\sup_{t\in [0,1]} \E{\left|u^T_t \right|} }{T^2} \int_{t^k_i}^{t^k_{i+1}} f(s)^4 ds.\\
       &\leq C\left\| f\right\|_{\LL^\infty (0,1)}^4 \left( \frac{\sup_{t\in [0,1]} \E{\left|u^T_t \right|^2}}{k^2} + \frac{\sup_{t\in [0,1]} \E{\left|u^T_t \right|}}{T^2k}\right).
    \end{align*}  
    Thanks to this last inequality, we compute
    \begin{align}
        &\left(\sum_{i=0}^{k-1} \sup_{t\in (t_i^k,t^k_{i+1}]} \left|\iint_{(t_i^k, t] \times \RR_+} f(s) \1_{\theta \leq u^T_{t^k_i}} \left(\Ntil^T-W \right)(ds,d\theta) \right|^4\right)^{1/2} \nonumber\\
        &\leq C\left\| f\right\|_{\LL^\infty (0,1)}^2 \left( \frac{\sup_{t\in [0,1]} \E{\left|u^T_t \right|^2}}{k} + \frac{\sup_{t\in [0,1]} \E{\left|u^T_t \right|}}{T^2}\right)^{1/2}.
    \end{align}
    \paragraph{Control of \texorpdfstring{$\E{\max_{i=1,\dots k}A^{(i)}_2}$}{}} ~\\
    To deal with this term, we need to split it in two as follows:
    \begin{align*}
        A^{(i)}_2 \leq& C\left|\iint_{(0, t^k_i] \times \RR_+} \left(f(s) - \sum_{j=0}^{k-1} f(t^k_j) \1_{s\in I_{j,k}} \right) \1_{\theta \leq \bar{u}^T_s} \left(\Ntil^T - W \right)(ds,d\theta)\right|^2 \\
        &+ C \left|\iint_{(0, t^k_i] \times \RR_+} \sum_{j=0}^{k-1} f(t^k_j) \1_{s\in I_{j,k}} \1_{\theta \leq \bar{u}^T_s} \left(\Ntil^T - W \right)(ds,d\theta) \right|^2.
    \end{align*}
    Hence, 
        \begin{align*}
        A^{(i)}_2 \leq& C\left|\iint_{(0, t^k_i] \times \RR_+} \left(f(s) - \sum_{j=0}^{k-1} f(t^k_j) \1_{s\in I_{j,k}} \right) \1_{\theta \leq \bar{u}^T_s} \left(\Ntil^T - W \right)(ds,d\theta)\right|^2 \\
        &+ C\left| \sum_{j=0}^{i-1}  f(t^k_j) \iint_{(t^k_j, t^k_{j+1}] \times \RR_+}   \1_{\theta \leq u^T_{t^k_j}} \left(\Ntil^T - W \right)(ds,d\theta) \right|^2.
    \end{align*}
    We now make use of $(a-b)^2 \leq C(a^2+b^2)$ for the first term to obtain:
    \begin{align*}
        A^{(i)}_2 \leq& C\left|\iint_{(0, t^k_i] \times \RR_+}\left(f(s) - \sum_{j=0}^{k-1} f(t^k_j) \1_{s\in I_{j,k}} \right) \1_{\theta \leq \bar{u}^T_s} \Ntil^T(ds,d\theta)\right|^2\\
        &+C\left|\iint_{(0, t^k_i] \times \RR_+} \left(f(s) - \sum_{j=0}^{k-1} f(t^k_i) \1_{s\in I_{j,k}} \right) \1_{\theta \leq \bar{u}^T_s} W(ds,d\theta)\right|^2 \\
        &+ C 
        \left|\sum_{j=0}^{i-1}   f(t^k_j)\iint_{(t^k_j, t^k_{j+1}] \times \RR_+}    \1_{\theta \leq u^T_{t^k_j}} \left(\Ntil^T - W \right)(ds,d\theta) \right|^2.
    \end{align*}
    We denote by $A^{(i)}_{2,1}$, $A^{(i)}_{2,2}$ and $A^{(i)}_{2,3}$ the three right-hand terms which we handle separately. Let us begin with $A^{(i)}_{2,1}$. Since $\left( \int_{0}^{t^k_i} \int_{\RR_+} \left(f(s) - \sum_{j=0}^{k-1} f(t^k_j) \1_{s\in I_{j,k}} \right) \1_{\theta \leq \bar{u}^T_s} \Ntil^T(ds,d\theta)\right)_{i=0,\dots, k-1}$ is a discrete martingale, we can make use of Doob's inequality and we get
    \begin{align*}
        \E{\max_{i=1, \dots, k} A^{(i)}_{2,1}} &\leq C\E{\left|\iint_{(0,1]\times \RR_+} \left(f(s) - \sum_{j=0}^{k-1} f(t^k_j) \1_{s\in I_{j,k}} \right) \1_{\theta \leq \bar{u}^T_s} \Ntil^T(ds,d\theta)\right|^2}\\
        &\leq C \int_{0}^{1} \left|f(s) - \sum_{j=0}^{k-1} f(t^k_j) \1_{s\in I_{j,k}} \right|^2 \E{\bar{u}^T_s} ds
    \end{align*}
    Hence, by the Mean Value Inequality, 
    \begin{align}
    \label{eq: A21}
        \E{\max_{i=1, \dots, k} A^{(i)}_{2,1}} &\leq C \sum_{j=0}^{k-1}\int_{t_j^k}^{t_{j+1}^k} \left|f(s) -  f(t^k_j)  \right|^2 \E{u^T_{t^k_j}} ds \nonumber \\
        &\leq C \left\| f' \right\|_{\LL^{\infty}(0,1)}^2 \sup_{t\in [0,1]} \E{u^T_t} \frac{1}{k^2}.
    \end{align}
    Here we have used $\sup_{t\in[0,1]} \E{u^T_t} \leq C$ and $\left\|f' \right\|_{\LL^{\infty}(0,1)}<+\infty$.
    Similarly, we can obtain
    \begin{align}
    \label{eq: A22}
        \E{\max_{i=1, \dots, k} A^{(i)}_{2,2}} &\leq C \left\| f' \right\|_{\LL^{\infty}(0,1)}^2 \sup_{t\in [0,1]} \E{u^T_t} \frac{1}{k^2}.
    \end{align}
    On the other hand, since $\left(\sum_{j=0}^{i-1}  f(t^k_j) \int_{t^k_j}^{t^k_{j+1}} \int_{\RR_+}   \1_{\theta \leq u^T_{t^k_j}} \left(\Ntil^T - W \right)(ds,d\theta) \right)_{i=1, \dots, k}$ is a discrete martingale, we have
    \begin{align*}
    \E{\max_{i=1, \dots, k} A^{(i)}_{2,3}} &\leq C \E{\left|\sum_{j=0}^{k-1} f(t^k_j)  \iint_{(t^k_j, t^k_{j+1}] \times \RR_+}    \1_{\theta \leq u^T_{t^k_j}} \left(\Ntil^T - W \right)(ds,d\theta)\right|^2}\\
    &\leq C \E{ \left|\iint_{(0,1]\times \RR_+}  \bar{f}(s) \1_{\theta \leq \bar{u}^T_s} \left(\Ntil^T - W \right)(ds,d\theta)\right|^2}
    \end{align*}
    By Proposition \ref{prop:maj-intw-nT}, we conclude that
    \begin{align}
    \label{eq: A23}
        \E{\max_{i=1, \dots, k} A^{(i)}_{2,3}} &\leq C \left\| f\right\|_{\LL^{\infty}(0,1)}^2 \left(\frac{1}{k} + \frac{k^2}{T^2}  \sup_{s \in [0,1]}\E{ u^T_s} \right).
    \end{align}
    Regrouping \eqref{eq: A21}, \eqref{eq: A22} and \eqref{eq: A23} we deduce that:
    $$\E{\max_{i=1,\dots, k} A^{(i)}_2} \leq C \left( \frac{1}{k} + \frac{k^2}{T^2} \right)$$
\end{proof}

\section{Upper bound of the 2-Wasserstein distance}
\label{sec: main upper bound}

In this section, we present the convergence rate of the different considered Hawkes processes to their limit. We will make use of general notations for the processes by writing:
\begin{align*}
    \Lambda^{T,\natural}_t &= \frac{\mu}{T}+ \mu\int_0^t \Psi^{(T),\natural}(t-s) ds + \iint_{(0,t)\times \RR_+} \Psi^{(T),\natural}(t-s) \1_{\theta \leq \Lambda^{T,\natural}_s} \Ntil^T(ds,d\theta),\\
    X^\natural_t &= \mu \int_0^t \rho^\natural(t-s) ds + \iint_{(0,t)\times \RR_+} \rho^\natural(t-s) \1_{\theta \leq X^\natural_s} W(ds,d\theta),
\end{align*}
where $\natural\in \{ -,0,+\}$ and where $\rho^\natural$ are defined according to Table \ref{tab: notation}.

In addition, since the coupling depends on parameter $k$, we decide to fix the value of $k$ in this section by taking $$k=\lfloor T^{4/5}\rfloor+1.$$
We now state our main theorem:
\begin{theorem}
\label{th: NUHP to X}
    Under Assumption \ref{assump: phi}, there exists $C>0$ such that for any $T> 2$,
    \begin{equation*}
        \E{\sup_{t\in[0,1]} \left|\Lambda^{T,\natural}_t - X^\natural_t\right|^2} \leq \frac{C}{\ln(T)}
    \end{equation*}
    with $\natural\in \{ -,0,+\}$.
\end{theorem}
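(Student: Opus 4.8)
The plan is to compare the two processes through the decomposition
\begin{align*}
\Lambda^T_t-X_t
&=\underbrace{\mu\int_0^t\left(\Psi^{(T)}-\rho\right)(t-s)\,ds}_{=:A_t}
+\underbrace{\iint_{(0,t)\times\RR_+}\left(\Psi^{(T)}(t-s)-\rho(t-s)\right)\1_{\theta\le\Lambda^T_s}\,\Ntil^T(ds,d\theta)}_{=:B_t}\\
&\quad+\underbrace{\iint_{(0,t)\times\RR_+}\rho(t-s)\,\1_{\theta\le\Lambda^T_s}\left(\Ntil^T-W\right)(ds,d\theta)}_{=:D_t}
+\underbrace{\iint_{(0,t)\times\RR_+}\rho(t-s)\left(\1_{\theta\le\Lambda^T_s}-\1_{\theta\le X_s}\right)W(ds,d\theta)}_{=:E_t}
\end{align*}
(as recalled at the beginning of Section \ref{sec: main upperbound}, the $\mu/T$ term is harmless). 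Since $\rho$ and $\Psi^{(T)}$ are differentiable, I would first pull out the $t$-dependence in every stochastic integral against $\rho(t-s)$ (resp. $\Psi^{(T)}(t-s)$) by Taylor's formula and Fubini, turning it into a constant-kernel stochastic integral plus a term $\int_0^t(\cdot)'(u)\big[\iint_{(0,u)\times\RR_+}(\cdots)\big]\,du$, so that genuine $\FF$-martingales appear. Because the filtration $\FF$ of \eqref{eq: filtration} is ``jumping'' and $X$ is not $\FF$-predictable, in $D_t$ I would replace $\1_{\theta\le\Lambda^T_s}$ by its discretised version $\1_{\theta\le\bar\Lambda^T_s}$ at the cost of an error governed by $\sup_s\E{|\Lambda^T_s-\bar\Lambda^T_s|}$, which is small since $\Lambda^T$ is an $\LL^2$-bounded càglàd semimartingale observed on a $1/k$-mesh.

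For the three ``easy'' terms: $\|A\|_\infty\le\mu\|\Psi^{(T)}-\rho\|_{\LL^1(0,1)}\le\mu\|\Psi^{(T)}-\rho\|_{\LL^2(0,1)}\le C/\sqrt T$ by Proposition \ref{prop: conv_psi_rho}. For $B_t$, after the Taylor reduction, Lemma \ref{lem-outil-tilde-N-T} and Doob's inequality bound $\E{\sup_{t\in[0,1]}|B_t|^2}$ by a constant times $\big(\|\Psi^{(T)}-\rho\|_{\LL^2(0,1)}^2+\|(\Psi^{(T)})'-\rho'\|_{\LL^2(0,1)}^2\big)\sup_s\E{\Lambda^T_s}$ plus a $T^{-2}$ term, hence by $O(1/T)$ once the kernel estimates of Section \ref{sec: lemmata} and the uniform bound $\sup_s\E{\Lambda^T_s}<\infty$ are invoked. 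For $D_t$, after discretisation and the Taylor reduction, Theorem \ref{thm: maj_sup_int_NTW} applied with $f\equiv\rho(0)$ and with $f=\rho'$ gives $\E{\sup_{t\in[0,1]}|D_t|^2}\le C(1/k+k^2/T^2)$, which for $k=\lfloor T^{4/5}\rfloor+1$ is $\le CT^{-2/5}$. Thus $\E{\sup_{t\in[0,1]}|A_t+B_t+D_t|^2}\le CT^{-2/5}$.

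The term $E_t$ is the crux: its continuous martingale part has bracket proportional to $\int_0^t|\Lambda^T_s-X_s|\,ds$ — because $\int_{\RR_+}(\1_{\theta\le\Lambda^T_s}-\1_{\theta\le X_s})^2\,d\theta=|\Lambda^T_s-X_s|$ — which is exactly the square-root-type non-Lipschitz feature that forbids a naive Grönwall argument in $\LL^2$. I would pass to the time-discretised, fully coupled pair $(\bar\Lambda^T_{t^k_i},\bar X_{t^k_i})_i$ and exploit the comonotone coupling of Theorem \ref{thm: Couplage} to control, cell by cell, the difference of the increments of the two driving stochastic integrals by a term with small coefficient $\1_{\theta\le\bar\Lambda^T}-\1_{\theta\le\bar X}$ (conditional variance $\propto|\bar\Lambda^T_{t^k_i}-\bar X_{t^k_i}|/k$) plus an $O(1/T)$ error from Corollary \ref{cor: maj Rio}, together with the kernel/drift contributions already estimated. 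On this system I would run a Yamada--Watanabe type argument: take the standard $C^2$ approximations $\phi_n$ of $|\cdot|$ with $0\le\phi_n''(z)\le 2/(n|z|)$, $|\phi_n'|\le1$, $0\le|z|-\phi_n(z)\le a_{n-1}$; apply the (jump) Itô / telescoping identity to $\phi_n(\bar\Lambda^T_{t^k_i}-\bar X_{t^k_i})$; use $z\,\phi_n''(z)\le 2/n$ for the ``small coefficient'' term; bound the $\Lambda^T$-jumps (of size $O(1/T)$) through $\|\phi_n''\|_\infty$; and close by discrete Grönwall. Optimising the free integer $n=n(T)$ so as to balance $a_{n-1}$, the $1/n$ bracket term and the $n$-dependent remainders against the polynomial errors of the previous paragraph yields $\sup_{t\in[0,1]}\E{|\Lambda^T_t-X_t|}\le C/\ln T$ (after also controlling $\E{|\Lambda^T_t-\bar\Lambda^T_t|}+\E{|X_t-\bar X_t|}$). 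Finally I would upgrade to the supremum in $\LL^2$: by the Burkholder--Davis--Gundy and Doob inequalities applied to the semimartingale decomposition of $E_t$, $\E{\sup_{t\in[0,1]}|E_t|^2}\le C\,\E{\int_0^1|\Lambda^T_s-X_s|\,ds}\le C\sup_s\E{|\Lambda^T_s-X_s|}\le C/\ln T$; combined with $\E{\sup_{t\in[0,1]}|A_t+B_t+D_t|^2}\le CT^{-2/5}\le C/\ln T$ this gives the claim.

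The main obstacle is the penultimate step: making the Yamada--Watanabe machinery work in the present coupled martingale-measure-with-jumps setting and, above all, tracking how the approximation parameter $n$ has to be tied to $T$ to produce exactly the logarithmic rate. Since all the remaining errors — the coupling error ($T^{-2/5}$), the kernel error ($T^{-1/2}$) and the time-discretisation error — are polynomially small, the logarithm is intrinsic to the square-root comparison and not an artefact of the coupling; correspondingly, the choice $k=\lfloor T^{4/5}\rfloor+1$ is only there to render those polynomial errors negligible. A secondary, bookkeeping-heavy difficulty is the interplay between the jumping filtration $\FF$, the non-$\FF$-adaptedness of $X$, and the discretisations, which must be arranged so that Theorem \ref{thm: maj_sup_int_NTW}, Lemma \ref{lem-outil-tilde-N-T} and Corollary \ref{cor: maj Rio} are legitimately applicable at each step.
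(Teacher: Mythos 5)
Your decomposition, your treatment of the drift and kernel terms via Proposition \ref{prop: conv_psi_rho} and Lemma \ref{lem-outil-tilde-N-T}, your use of Theorem \ref{thm: maj_sup_int_NTW} for the coupling term with $k=\lfloor T^{4/5}\rfloor+1$, and your diagnosis that the logarithm is intrinsic to the square-root comparison and must come from tuning a Yamada parameter against polynomially small errors, all match the paper. The gap is in how the Yamada step is set up, and it is not cosmetic: as sketched, your version does not close. You apply the approximations $\phi_n$ of $|\cdot|$ to the discretized difference $\delta_i:=\bar{\Lambda}^T_{t_i^k}-X_{t_i^k}$ and telescope. In such a scheme every second-order remainder --- the jump corrections of $\Lambda^T$ and the discrete Taylor remainders $\tfrac12\phi_n''(\xi_i)(\delta_{i+1}-\delta_i)^2$ --- is weighted by $\|\phi_n''\|_\infty$, not by $\phi_n''$ evaluated at $\delta_i$, so the inequality $z\,\phi_n''(z)\le 2/n$ is unavailable there. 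Moreover the conditional second moment of $\delta_{i+1}-\delta_i$ on a cell is of order $(\bar{\Lambda}^T_{t_i^k}+X_{t_i^k})/k$ (the brackets of the two driving integrals separately), not $|\delta_i|/k$, and the sum of squared jumps of $\Lambda^T$ over $[0,1]$ is only $O(1)$ in expectation. Since the tuning that produces the logarithm forces $\|\phi_n''\|_\infty$ to be polynomially large in $T$ (in the paper's notation $\|\Upsilon''_{\eps,\eta}\|_\infty\le 2m^2e^{\eta/m^2}/(\eps\eta)$ with $\eta=\tfrac{m^2}{10}\ln T$, i.e.\ of order $T^{1/10}$), these remainders blow up.

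The paper avoids this entirely. It writes $\Delta^T_t=R^T_t+\frac{e^{-t/m}}{m}M^{\bar{\Delta}^T,W}_{t^-}$, where $M^{\bar{\Delta}^T,W}_t=\iint_{(0,t]\times\RR_+}e^{s/m}(\1_{\theta\le\bar{\Lambda}^T_s}-\1_{\theta\le X_s})W(ds,d\theta)$ is a \emph{continuous} martingale: the factorization $\rho(t-s)=\frac{e^{-t/m}}{m}e^{s/m}$ (rather than your Taylor reduction) pulls the $t$-dependence out and pushes all the jumpy, discretized and coupling pieces into $R^T$, which is controlled polynomially in $\LL^2(\sup)$ by Proposition \ref{prop: maj_RT}. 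Continuous It\^o applied to $\Upsilon_{\eps,\eta}(M^{\bar{\Delta}^T,W})$ then produces $\Upsilon''_{\eps,\eta}$ evaluated exactly at the current point times $|\bar{\Lambda}^T_u-X_u|\le|\Lambda^T_u-\bar{\Lambda}^T_u|+|R^T_u|+C|M^{\bar{\Delta}^T,W}_{u^-}|$; the last term is absorbed by $\Upsilon''_{\eps,\eta}(x)|x|\le 2m^2/\eta$ with no Gr\"onwall at all, and $\|\Upsilon''_{\eps,\eta}\|_\infty$ only ever multiplies the polynomially small quantities $\E{|R^T_u|}$ and $\E{|\Lambda^T_u-\bar{\Lambda}^T_u|}$. (Note also that $X$ is never discretized in the paper; your $\bar X$ is unnecessary.) To salvage your route you must reproduce this separation: the Yamada function has to be applied to the $W$-integral of $\1_{\theta\le\bar{\Lambda}^T}-\1_{\theta\le X}$ alone, not to the jump-bearing difference of the processes.
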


Before writing the proof, let us write some remarks and some notations employed in the proof.
\begin{remark}
    Note that we do not make use of the assumption $\sup_{T>1} \left\|\Psi^{T,\natural} \right\|_{\infty}<+\infty$. Indeed, in our proof, we only need $\sup_{T>1} \left\|\Psi^{(T),\natural} \right\|_{\LL^\infty(0,1)}<+\infty$ which is a result that we have proved in Lemma \ref{lem: sup_norm_Psi}. Consequently, our result extends the original convergence (non-quantified) result of \cite{jaisson_limit_2015}.
\end{remark}

\begin{remark}
\label{rem: skorokhod}
    Whereas the result is proved using the $2$-Wasserstein distance between $\Ntil^T$ and $W$, the distance chosen in Theorem \ref{th: NUHP to X} is the supremum norm. This allows us to characterize the convergence rate in the Skorokhod space $D$ (see Billinglsey \cite{billingsley_convergence_1968} for more detail on such space).
\end{remark}
\begin{cor}
    There exists $C>0$ such that for any $T>2$,
    \begin{equation*}
        \E{d_D (\Lambda^{T,\natural}, X^\natural )^2} \leq \frac{C}{\ln(T)}.
    \end{equation*}
    where $d_D$ stand for the Skorokhod distance and $\natural\in \{ -,0,+\}$.
\end{cor}

\begin{proof}[Proof of Theorem \ref{th: NUHP to X}]
    For the sake of simplicity, we drop out the index $\natural$ for all the processes.\\
    For $t\geq 0$, we denote by $\Delta^T_t$ (resp. $d^T(t)$) the difference between $\Lambda^T$ and $X$ (resp. $\Psi^{(T)}$ and $\rho$) at time $t$, that is: 
    $$\Delta^T_t := \Lambda^T_t-X_t \quad (\text{resp. } d^T(t) := \Psi^{(T)}(t)-\rho(t)\,)$$
    where $\Psi^{(T)}$ is defined in \eqref{eq:def_psiT}. We also define $\hat{\Delta}^T$ by 
    $$\hat{\Delta}^T := \bar{\Lambda}^T-X$$
    where $\bar{\Lambda}^T$ is the discretized version of $\Lambda^T$ introduced in Notation \ref{not: discretized}.
    We divide the proof in three main steps: in the first part of the proof, we rewrite $\Delta^T$ in a convenient way and we introduce different notations; then, for any $t\in [0,1]$, we control $\sup_{s\in [0,t]} \E{\left|\hat{\Delta}^T_s\right|}$ with a Yamada's function; and finally we provide an upper bound for $\E{\sup_{s\in [0,t]} \left| \Delta^T_s\right|^2}$.

    \paragraph{Rewriting of \texorpdfstring{$\Delta^T$}{}.}~\\
    We fix $t\geq 0$. Then, using \eqref{def: Lambda^T} and $\rho(t)=m\rho(t-\cdot)\rho(\cdot)$, we obtain
    \begin{align}
    \Delta_t^T&=\frac{\mu}{T}+ \mu \int_0^t \left[\Psi^{(T)}(u) -\rho(u)\right]du + \iint_{(0,t)\times\RR_+}\left[\Psi^{(T)}(t-s) -\rho(t-s)\right]\1_{\theta \leq \Lambda_s^T}\Ntil^T(ds,d\theta) \nonumber \\
    &+ \iint_{(0,t)\times\RR_+}\rho(t-s)\left[\1_{\theta \leq \Lambda_s^T} -\1_{\theta \leq \bar{\Lambda}_s^T}\right]\Ntil^T(ds,d\theta) \nonumber \\
    &+ \iint_{(0,t)\times\RR_+}\rho(t-s)\1_{\theta \leq \bar{\Lambda}_s^T}(\Ntil^T-W)(ds,d\theta)\nonumber \\
    &+ \iint_{(0,t)\times\RR_+}\rho(t-s)\left[\1_{\theta \leq \bar{\Lambda}_s^T}-\1_{\theta \leq X_s}\right]W(ds,d\theta) \nonumber\\
    &=: \mu \int_0^t d^T(u) du + Y^T_{t} + m\rho(t) M^{\Lambda^T, \Ntil^T}_{t^-} +m\rho(t)M^{\Ntil^T-W}_{t^-}+ m\rho(t)M^{\hat{\Delta}^T,W}_{t^-} \label{eq: Delta^T avec reste}
    \end{align}
 where $Y^T$, $M^{\Lambda^T, \Ntil^T}$, $M^{\Ntil^T-W}$ and $M^{\hat{\Delta}^T, W}$ are defined by
    \begin{align*}
    Y^T_t&:= \iint_{(0,t)\times\RR_+}\left[\Psi^{(T)}(t-s)) -\rho(t-s)\right]\1_{\theta \leq \Lambda_s^T}\Ntil^T(ds,d\theta), \\
    M^{\Lambda^T, \Ntil^T}_t &:=\iint_{(0,t]\times\RR_+}\rho(-s)\left[\1_{\theta \leq \Lambda_s^T}-\1_{\theta \leq \bar{\Lambda}^T_s}\right]\Ntil^T(ds,d\theta),\\
    M^{\Ntil^T-W}_t &:= \iint_{(0,t]\times\RR_+} \rho(-s) \1_{\theta \leq \bar{\Lambda}^T_s} (\Ntil^T-W)(ds, d\theta),\\
    M^{\hat{\Delta}^T,W}_t &:=\iint_{(0,t]\times\RR_+} \rho(-s)\left[\1_{\theta \leq \bar{\Lambda}_s^T}-\1_{\theta \leq X_s}\right]W(ds,d\theta).
    \end{align*}
This allows us to rewrite the previous equation as
\begin{equation}
\label{eq: Delta_final}
    \Delta^T_t = R^T_t + m \rho(t)M^{\hat{\Delta}^T,W}_{t^{-}}
\end{equation}
where $R^T_t$ is naturally introduced to guarantee the validity of the equation.

\paragraph{Upper bound of \texorpdfstring{$\sup_{s\in [0,t]} \E{\left|\hat{\Delta}^T_s\right|}$}{}}. ~\\
The purpose of this paragraph is to control the expectation of $\hat{\Delta}^T_t = \bar{\Lambda}^T_t-X_t$. To keep the length of this step within limits we postpone some computations to Section \ref{subs: tools for theorem NUHP to X}. We focus here on the central part of the proof that is the use of Yamada-type function to derive the inequality. Firstly, note that
\begin{equation*}
    \E{\left|\hat{\Delta}^T_t \right|} \leq \E{\left|\Lambda^T_t - \bar{\Lambda}^T_t\right|}+\E{\left|R^T_t\right|} + C\E{\left|M^{\hat{\Delta}^T, W}_{t^{-}}\right|},~~t\in [0,1].
\end{equation*}
Let $(\eps,\eta)\in \RR_+^\ast \times \RR_+^\ast$ and $s\in [0,t]$ be fixed. We define by $\Upsilon_{\eps, \eta}$ the Yamada's function which is a $\Ccal^2$ function that satisfies, for any $x\in \RR$, the followings
\begin{equation}
\label{eq: yamada}
    \begin{array}{cc}
        |x|-\eps \leq \Upsilon_{\eps, \eta}(x) \leq |x|, &\quad \left| \Upsilon'_{\eps, \eta}(x)\right| \leq 1,\\
        \left| \Upsilon''_{\eps, \eta}(x)\right| \leq \frac{2m^2}{|x|\eta}, & \quad  \left| \Upsilon''_{\eps, \eta}(x)\right| \leq 2m^2\frac{e^{\eta/m^2}}{\eps \eta}.
    \end{array}
\end{equation}
Such function exists and its construction is originally presented in \cite{yamada_sur_1978}; however the methodology of the proof is inspired by \cite{alfonsi_discretization_2005}.
Then, we have:
\begin{equation}
\label{eq: delta_yamada}
    \E{\left|\hat{\Delta}^T_s \right|} \leq \E{\left|\Lambda^T_s - \bar{\Lambda}^T_s\right|} + \E{\left|R^T_s\right|} + C\left(\eps +\E{\Upsilon_{\eps,\eta}\left(M^{\hat{\Delta}^T, W}_s\right) } \right).
\end{equation}
Let us focus on the last term of the inequality. By Itô's formula, we compute
\begin{align*}
    \Upsilon_{\eps, \eta} \left( M^{\hat{\Delta}^T, W}_s \right) =& \iint_{(0,s]\times \RR_+}\Upsilon'_{\eps, \eta} \left( M^{\hat{\Delta}^T, W}_{u^{-}} \right) \rho(-u) \left( \1_{\theta \leq \bar{\Lambda}^T_u}-\1_{\theta \leq X_u}\right)W(du,d\theta)\\
    &+\frac{1}{2} \int_0^s \int_{\RR_+}\Upsilon''_{\eps, \eta} \left( M^{\hat{\Delta}^T, W}_{u^{-}} \right) \rho(-u)^2 \left( \1_{\theta \leq \bar{\Lambda}^T_u}-\1_{\theta \leq X_u}\right)^2 d\theta du\\
    =& \iint_{(0,s]\times \RR_+}\Upsilon'_{\eps, \eta} \left( M^{\hat{\Delta}^T, W}_{u^{-}} \right)\rho(-u) \left( \1_{\theta \leq \bar{\Lambda}^T_u}-\1_{\theta \leq X_u}\right)W(du,d\theta) \\
    &+\frac{1}{2} \int_0^s \Upsilon''_{\eps, \eta} \left( M^{\hat{\Delta}^T, W}_{u^{-}} \right)  \rho(-u)^2\left| \bar{\Lambda}^T_u - X_u\right|du .
\end{align*}
Hence, taking the expectation, one has
\begin{equation*}
   \E{\Upsilon_{\eps, \eta} \left( M^{\hat{\Delta}^T, W}_s \right) }= \frac{1}{2} \int_0^s \E{\Upsilon''_{\eps, \eta} \left( M^{\hat{\Delta}^T, W}_{u^{-}} \right) \left| \bar{\Lambda}^T_u - X_u\right|} \rho(-u)^2 du .
\end{equation*}
By \eqref{eq: Delta_final} and the triangular inequality, we get
\begin{align*}
   \hspace*{-1cm}\E{\Upsilon_{\eps, \eta} \left( M^{\hat{\Delta}^T, W}_s \right) }\leq & C\int_0^s \E{\Upsilon''_{\eps, \eta} \left( M^{\hat{\Delta}^T, W}_{u^{-}} \right)\left|\Lambda^T_u - \bar{\Lambda}^T_u\right|} du + C\int_0^s \E{\Upsilon''_{\eps, \eta} \left( M^{\hat{\Delta}^T, W}_{u^{-}} \right)\left|M^{\hat{\Delta}^T, W}_{u^{-}} \right|} du \\
   &+C\int_0^s \E{\Upsilon''_{\eps, \eta} \left( M^{\hat{\Delta}^T, W}_{u^{-}} \right)  \left| R^T_u\right|}du .
\end{align*}
Using the properties of the Yamada's function presented in \eqref{eq: yamada}, we have
\begin{align*}
   \E{\Upsilon_{\eps, \eta} \left( M^{\hat{\Delta}^T, W}_s \right) }\leq & C\left(\frac{1}{\eta}+ \frac{e^{\eta /m^2}}{\eps \eta} \int_0^s \E{\left|R^T_u\right|} + \E{\left|\Lambda^T_u - \bar{\Lambda}^T_u \right|}du \right).
\end{align*}
By Proposition \ref{prop: discret_lambda_rho} and Proposition \ref{prop: maj_RT} with $k= \lfloor T^{4/5}\rfloor+1$, we obtain:
\begin{align*}
   \E{\Upsilon_{\eps, \eta} \left( M^{\hat{\Delta}^T, W}_s \right) }\leq & C\left(\frac{1}{\eta}+ \frac{e^{\eta /m^2}}{\eps \eta} \sqrt{\frac{1}{\sqrt{T}}+\frac{1}{\sqrt{k}} + \frac{k^2}{T^2}} \right) \leq C\left(\frac{1}{\eta}+ \frac{e^{\eta /m^2}}{\eps \eta} \frac{1}{T^{1/5}} \right).
\end{align*}
Hence, from \eqref{eq: delta_yamada} and Propositions \ref{prop: maj_RT} and \ref{prop: discret_lambda_rho}, we compute
$$\E{\left|\hat{\Delta}^T_s \right|} \leq C\left(\frac{1}{T^{1/5}}+\eps+\frac{1}{\eta}+ \frac{e^{\eta /m^2}}{\eps \eta} \frac{1}{T^{1/5}} \right).$$
We now take $\eta = \frac{m^2}{10} \ln\left(T \right)$ and $\eps = \frac{10}{\ln(T)}$ to get 
$$\E{\left|\hat{\Delta}^T_s \right|} \leq C\left(\frac{1}{T^{1/10}}+\frac{1}{\ln(T)}\right).$$

\paragraph{Upper bound of \texorpdfstring{$\E{\sup_{s\in [0,t]} \left|\Delta^T_s\right|^2}$}{}}. ~\\
We begin this part of the proof by noting that 
\begin{equation*}
    \E{\sup_{s\in [0,t]} \left| \Delta^T_s \right|^2} \leq C \left( \E{\sup_{s\in [0,t]} \left| R^T_s \right|^2} + \E{\sup_{s\in [0,t]} \left| M^{\hat{\Delta}^T,W}_{s^{-}} \right|^2}  \right).
\end{equation*}
Hence, by Proposition \ref{prop: maj_RT} with $k=\lfloor T^{4/5} \rfloor +1 $, we get
\begin{equation*}
    \E{\sup_{s\in [0,t]} \left| \Delta^T_s \right|^2} \leq C \left(\frac{1}{T^{2/5}} + \E{\sup_{s\in [0,t]} \left| M^{\hat{\Delta}^T,W}_{s^{-}} \right|^2}\right) = C \left(\frac{1}{T^{2/5}} + \E{\sup_{s\in [0,t]} \left| M^{\hat{\Delta}^T,W}_{s} \right|^2}\right).
\end{equation*}
From BDG inequality and the previous part, we conclude that
\begin{align*}
    \E{\sup_{s\in [0,t]} \left| \Delta^T_s \right|^2} &\leq C \left(\frac{1}{T^{2/5}} + \int_{0}^1 \E{\left|\hat{\Delta}^T_s\right|} ds \right) \leq C\left(\frac{1}{T^{1/10}}+\frac{1}{\ln(T)}\right).
\end{align*}
\end{proof}

We conclude this section by a corollary of Theorem \ref{th: NUHP to X} that gives the convergence rate of quantities studied in limit theorems for Hawkes processes. This corollary echoes the ideas of Theorem 2.12 in \cite{horst_functional_2024}.
\begin{cor}
\label{cor: autre maj}
    There exists $C>0$ such that for any $T>2$,
        $$(i)~~\E{\sup_{t\in [0,1]} \left| \frac{1}{T}\int_0^{tT} \lambda^{T,\natural}_s ds - \int_0^t X^\natural_s ds \right|^2} \leq \frac{C}{\ln(T)}, \quad (ii)~~\E{\sup_{t\in [0,1]} \left| \frac{H^{T,\natural}_{tT}}{T^2} - \int_0^t X^\natural_s ds \right|^2} \leq \frac{C}{\ln(T)}$$
        $$\text{and} \quad (iii)~~\E{\sup_{t\in [0,1]} \left| \frac{H^{T,\natural}_{tT}-\int_0^{tT}\lambda^{T,\natural}_s ds}{T} - \iint_{(0,t]\times \RR_+} \1_{\theta \leq X^\natural_s} W(ds,d\theta) \right|^2 }\leq \frac{C}{\ln(T)}.$$
\end{cor}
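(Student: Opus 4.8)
The plan is to derive all three bounds from Theorem~\ref{th: NUHP to X} by combining the change of variables $s=rT$ with classical martingale estimates. Three identities drive the argument. First, by substitution and the definition of $\Lambda^T$, $\frac{1}{T^{2}}\int_0^{tT}\lambda^T_s\,ds=\int_0^t\Lambda^T_r\,dr$. Second, writing $H^T_r=\iint_{(0,r]\times\RR_+}\1_{\theta\le\lambda^T_s}N(ds,d\theta)$ as in Corollary~\ref{cor: poisson_imbedding} and rescaling space and time, $\frac{1}{T}\bigl(H^T_{tT}-\int_0^{tT}\lambda^T_s\,ds\bigr)=\iint_{(0,t]\times\RR_+}\1_{\theta\le\Lambda^T_s}\Ntil^T(ds,d\theta)$. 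Third, in the representation \eqref{def: CIR} one has $mX_t+\int_0^t(X_s-\mu)\,ds=\iint_{(0,t]\times\RR_+}\1_{\theta\le X_s}W(ds,d\theta)$; this is checked by differentiating \eqref{def: CIR} and using $\rho'=-\rho/m$ and $\rho(0)=1/m$, which turns \eqref{def: CIR} into the classical CIR SDE, so the right-hand side above is precisely its martingale part $\int_0^t\sqrt{X_s}\,dB_s$.

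Item $(i)$ is then immediate: after the change of variables it remains to bound $\E{\sup_{t\in[0,1]}\bigl|\int_0^t(\Lambda^T_s-X_s)\,ds\bigr|^2}$, and since $\sup_{t\in[0,1]}\bigl|\int_0^t(\Lambda^T_s-X_s)\,ds\bigr|\le\int_0^1|\Lambda^T_s-X_s|\,ds\le\sup_{s\in[0,1]}|\Lambda^T_s-X_s|$, taking expectations and applying Theorem~\ref{th: NUHP to X} gives the bound $C/\ln T$. For item $(ii)$ I would write $\frac{H^T_{tT}}{T^{2}}-\int_0^t X_s\,ds$ as the sum of $\frac{1}{T^{2}}M^T_{tT}$, where $M^T_r:=H^T_r-\int_0^r\lambda^T_s\,ds$, and $\frac{1}{T^{2}}\int_0^{tT}\lambda^T_s\,ds-\int_0^t X_s\,ds$; the latter is controlled by item $(i)$, and for the former, $M^T$ is a square-integrable martingale with $\langle M^T\rangle_r=\int_0^r\lambda^T_s\,ds$, so Doob's $L^2$ inequality yields $\E{\sup_{r\le T}|M^T_r|^2}\le 4\,\E{\int_0^T\lambda^T_s\,ds}=4\,\E{H^T_T}$. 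Since $\E{\lambda^T_s}=\mu+\mu\int_0^s\Psi^T(s-u)\,du$ and $\|\Psi^T\|_1=a_T/(1-a_T)=T-1$, one gets $\E{H^T_T}\le CT^{2}$, hence $\E{\sup_{t\in[0,1]}|T^{-2}M^T_{tT}|^{2}}\le C/T^{2}$, which is absorbed into $C/\ln T$.

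The substantive case is item $(iii)$. Using the representations above, it remains to bound $\E{\sup_{t\in[0,1]}\bigl|\iint_{(0,t]\times\RR_+}\1_{\theta\le\Lambda^T_s}\Ntil^T(ds,d\theta)-\iint_{(0,t]\times\RR_+}\1_{\theta\le X_s}W(ds,d\theta)\bigr|^2}$, which I would decompose, exactly as in the proof of Theorem~\ref{th: NUHP to X}, into the three martingales $\iint(\1_{\theta\le\Lambda^T_s}-\1_{\theta\le\bar{\Lambda}^T_s})\Ntil^T$, $\iint\1_{\theta\le\bar{\Lambda}^T_s}(\Ntil^T-W)$ and $\iint(\1_{\theta\le\bar{\Lambda}^T_s}-\1_{\theta\le X_s})W$ (the discretized intensity $\bar{\Lambda}^T$ is inserted so that the integrand against $W$ is predictable for the filtration $\FF$ of \eqref{eq: filtration}). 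The middle term is bounded by Theorem~\ref{thm: maj_sup_int_NTW} with $f\equiv 1$ and $u^T=\Lambda^T$, giving $C(1/k+k^2/T^{2})=O(T^{-2/5})$ for $k=\lfloor T^{4/5}\rfloor+1$. For the first term, Lemma~\ref{lem-outil-tilde-N-T}$(i)$ and Doob's inequality bound the squared supremum by $4$ times the expectation of $\frac{1}{T}\iint_{(0,1]\times\RR_+}\1_{\theta\in[\bar{\Lambda}^T_s,\Lambda^T_s]}N^T(ds,d\theta)$, which by the compensation formula equals $\E{\int_0^1|\Lambda^T_s-\bar{\Lambda}^T_s|\,ds}$ and is $O(T^{-2/5})$ by the discretization estimate for $\Lambda^T$ (Proposition~\ref{prop: discret_lambda_rho}). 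For the last term, the Burkholder--Davis--Gundy inequality together with the formula for the bracket of a $W$-integral reduce it to $C\,\E{\int_0^1|\bar{\Lambda}^T_s-X_s|\,ds}=C\int_0^1\E{|\bar{\Delta}^T_s|}\,ds$, which is $O(1/\ln T)$ by the intermediate estimate established in the proof of Theorem~\ref{th: NUHP to X}. Summing the three contributions gives $C/\ln T$.

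The main obstacle is precisely the bookkeeping in item $(iii)$: one must make sure every integrand against the coupled Brownian sheet $W$ is $\FF$-predictable --- which forces the systematic use of the discretized intensity $\bar{\Lambda}^T$ and of the estimates from Theorem~\ref{thm: maj_sup_int_NTW} and Proposition~\ref{prop: discret_lambda_rho} --- and one needs the uniform-in-$T$ second moment bound $\sup_{s\in[0,1]}\E{(\Lambda^T_s)^{2}}<C$ in order to apply Theorem~\ref{thm: maj_sup_int_NTW}; granting these, items $(i)$ and $(ii)$ are routine corollaries of Theorem~\ref{th: NUHP to X} and classical martingale inequalities.
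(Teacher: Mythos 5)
Your proposal is correct and follows essentially the same route as the paper: item (i) by change of variables plus Theorem \ref{th: NUHP to X}, item (ii) by splitting off the compensated martingale $\frac{1}{T}\iint \1_{\theta\le\Lambda^T_s}\Ntil^T(ds,d\theta)$, and item (iii) by the identical three-term decomposition through $\bar{\Lambda}^T$ reusing the estimates from the proof of Theorem \ref{th: NUHP to X}. The only (harmless) deviation is in (ii), where you control the martingale term via Doob's $L^2$ inequality and the crude bound $\E{H^T_T}\le CT^2$, whereas the paper goes through a fourth-moment bound from Lemma \ref{lem-outil-tilde-N-T}(ii) combined with Jensen's inequality; both yield $O(T^{-2})$, which is absorbed into $C/\ln(T)$.
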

\begin{proof}
For the sake of simplicity, we drop out the index $\natural$ for all the processes.
    \begin{itemize}
        \item[(i)] It is a direct application of Theorem \ref{th: NUHP to X} with a change of variable.
        \item[(ii)] For $t\in [0,1]$, we have: 
        \begin{align*}
            \frac{H^T_{tT}}{T^2} - \int_0^t X_s ds = \frac{1}{T}\iint_{(0,t]\times \RR_+} \1_{\theta \leq \Lambda^T_s} \Ntil^T(ds,d\theta) + \int_0^t \left(\Lambda^T_s -X_s\right) ds.
        \end{align*}
        Hence, using the triangular inequality combined with Jensen's inequality, we get
        \begin{align*}
            \E{\sup_{t\in [0,1]} \left| \frac{H^T_{tT}}{T^2} - \int_0^t X_s ds \right|^2} \leq& \frac{C}{T^2}\E{\sup_{t\in [0,1]} \left| \iint_{(0,t]\times \RR_+} \1_{\theta \leq \Lambda^T_s} \Ntil^T(ds,d\theta)\right|^4}^{1/2}\\
            &+ C\E{\sup_{t\in [0,1]} \left|\Lambda^T_t -X_t\right|^2}.
        \end{align*} 
        Thus, by Theorem \ref{th: NUHP to X}, we have
        \begin{align*}
            \E{\sup_{t\in [0,1]} \left| \frac{H^T_{tT}}{T^2} - \int_0^t X_s ds \right|} \leq& \frac{C}{\ln(T)}+\frac{1}{T^2}\E{\sup_{t\in [0,1]} \left| \iint_{(0,t]\times \RR_+} \1_{\theta \leq \Lambda^T_s} \Ntil^T(ds,d\theta)\right|^4}^{1/2}.
        \end{align*}
        Moreover, by Lemma \ref{lem-outil-tilde-N-T}, we compute
        $$\E{\sup_{t\in [0,1]} \left| \iint_{(0,t]\times \RR_+} \1_{\theta \leq \Lambda^T_s} \Ntil^T(ds,d\theta)\right|^4} \leq C \E{\left|\int_0^1 \Lambda^T_s ds\right|^2} + \frac{C}{T^2}\E{\int_0^1 \Lambda^T_s ds}.$$
        This with Proposition \ref{prop: regularite_lambda} yield
        $$\E{\sup_{t\in [0,1]} \left| \iint_{(0,t]\times \RR_+} \1_{\theta \leq \Lambda^T_s} \Ntil^T(ds,d\theta)\right|^4} \leq C.$$
        Therefore,
        \begin{align*}
            \E{\sup_{t\in [0,1]} \left| \frac{H^T_{tT}}{T^2} - \int_0^t X_s ds \right|^2} \leq& \frac{C}{\ln(T)}.
        \end{align*}
    \item[(iii)] For $t\in[0,1]$, we have
    $$\hspace*{-1.5cm}\frac{H^T_{tT}-\int_0^{tT}\lambda^T_s ds}{T} - \iint_{(0,t]\times \RR_+} \1_{\theta \leq X_s} W(ds,d\theta) = \iint_{(0,t]\times \RR_+} \1_{\theta \leq \Lambda^T_s} \Ntil^T(ds,d\theta) - \iint_{(0,t]\times \RR_+} \1_{\theta \leq X_s} W(ds,d\theta).$$
    We decompose this quantity in three terms as follows:
    \begin{align*}
        \frac{H^T_{tT}-\int_0^{tT}\lambda^T_s ds}{T} - \iint_{(0,t]\times \RR_+} \1_{\theta \leq X_s} W(ds,d\theta) =& \iint_{(0,t]\times \RR_+} \1_{\theta \leq \Lambda^T_s}- \1_{\theta \leq \bar{\Lambda}^T_s} \Ntil^T(ds,d\theta)\\
        &+ \iint_{(0,t]\times \RR_+} \1_{\theta \leq \bar{\Lambda}^T_s} \left(\Ntil^T-W\right)(ds,d\theta)\\
        &+\iint_{(0,t]\times \RR_+} \1_{\theta \leq \bar{\Lambda}^T_s} - \1_{\theta \leq X_s} W(ds,d\theta)
    \end{align*}
    By the same computations that we make in the proof of Theorem \ref{th: NUHP to X}, we get
    $$\E{\sup_{t\in [0,1]} \left| \frac{H^T_{tT}-\int_0^{tT}\lambda^T_s ds}{T} - \iint_{(0,t]\times \RR_+} \1_{\theta \leq X_s} W(ds,d\theta) \right|^2 } \leq \frac{C}{\ln(T)}.$$
    \end{itemize}
\end{proof}

\section{Technical Lemmata}
\label{sec: lemmata}

This section established different results on $\Psi^{(T),\natural}$ and $\rho^\natural$ in the case of NUHP ($\natural = -$), WCHP ($\natural = 0$) and SNUHP ($\natural = +$).
We divide this section into three subsections. The first focuses on properties of $\Psi^{(T),\natural}$ and $\rho^\natural$, including the proof of Propositions \ref{prop: conv_psi_rho_-} and \ref{prop: conv_psi_rho_+} and other inequalities. The second presents key results on the different processes. In particular, we establish the assumptions required for Theorem \ref{thm: Couplage}. Finally, the last subsection is dedicated to technical elements involved in the proof of Theorem \ref{th: NUHP to X}.

\subsection{Preliminary results on \texorpdfstring{$\Psi^{(T),\natural}$ and $\rho^\natural$}{}}

We recall the notation used in the previously presented work:
$$d^{T,\natural}(u) = \Psi^{(T),\natural}(u) - \rho^\natural(u), \quad u\in \RR_+, $$
where $\Psi^{T,\natural} = \sum_{k\geq 1} \left(\phi^{T,\natural}\right)^{\ast k} = \sum_{k\geq 1} (a^\natural_T)^k \left(\phi\right)^{\ast k}$ and $\Psi^{(T),\natural}= \Psi^{T,\natural}(T \cdot)$.
Moreover, we also make use of the following notations: 
\begin{equation}
    \phi^{(T),\natural} = \phi^{T,\natural}(T \cdot)=a^\natural_T\phi(T\cdot)\quad \text{and}\quad \rho^{(T),\natural} = a^\natural_T \rho^\natural(T \cdot),
\end{equation}
where $\natural \in \{-,0,+ \}$ and $a^\natural_T$ satisfies one of the $3$ situations of interest ($a^-_T>1$, or $a^0_T=1$ or $a^+_T>1$) satisfying Assumption \ref{assump: phi}.

For the sake of completeness, we summarize the existing convergence results involving $\Psi^{(T),\natural}$ and their quantitative versions in the Table \ref{tab:psi_conv_result}. When our results fill the gaps in the literature, we state the corresponding propositions and lemmas proved in this paper, and highlight them by underlining.
\renewcommand{\arraystretch}{1.8}
\begin{table}[!ht]
    \centering
    \hspace*{-1cm}\begin{tabular}{|C{1.5cm}|C{7.5cm}|C{6.5cm}|}
    \cline{2-3} \multicolumn{1}{c|}{}  & $\left\| \Psi^{(T), \natural} - \rho^\natural\right\|_{\LL^2(0,1)} \xrightarrow[T \to +\infty]{} 0$ & $\left\|\Psi^{(T), \natural} - \rho^\natural \right\|_{\LL^2(0,1)}  \leq \frac{C}{\sqrt{T}}$  \\ \hline
        NUHP \newline $(\natural = -)$ & Lemma 4.5, \cite{jaisson_limit_2015} & \underline{\textit{Proposition \ref{prop: conv_psi_rho_-}}} \\ \hline
        WCHP \newline $(\natural = 0)$ & Theorem 2.12, \cite{horst_functional_2024} & Proof of Proposition 2.13, \cite{horst_functional_2024} \\ \hline
        SNUHP \newline $(\natural = +)$& $\bullet$ Pointwise convergence: Lemma 2.2, \cite{liu_scaling_2024} \newline $\bullet$ \underline{\textit{$\LL^2(0,1)$ convergence: Proposition \ref{prop: conv_psi_rho_+}}} & \underline{\textit{Proposition \ref{prop: conv_psi_rho_+}}} \\ \hline
    \end{tabular}
    \caption{State of the art of convergence results}
    \label{tab:psi_conv_result}
\end{table}

Whereas the proof of Proposition \ref{prop: conv_psi_rho_-} and \ref{prop: conv_psi_rho_+} are similar, more work is necessary to deal with the case $\left\| \phi^{T,+}\right\| >1$. However, it is important to note that the methodology of both proofs is inspired by the methodology used in \cite{xu_diffusion_2024} or \cite{xu_scaling_2024}. Indeed, the authors of \cite{xu_scaling_2024} prove the convergence given in Proposition \ref{prop: conv_psi_rho_0} which deals with WCHP. Here, for the sake of completeness, we decide to adapt their proof for the two other cases. To do so, we make use of the Fourier transform defined as follows:
$$\Fcal(f): \xi \mapsto \int_{\RR} e^{\i \xi t} f(t) dt.$$

\subsubsection{Proof of Proposition \ref{prop: conv_psi_rho_-}}
\label{subs: proof conv_psi_rho_-}

In this part, we established the convergence rate presented in Proposition \ref{prop: conv_psi_rho_-}.
During the proof we will use the $\LL^2$-norm and so we will need the following lemma:
\begin{lem}
\label{lem: Psi_L2}
For any $T>1$, $\Psi^{(T),-} \in \LL^2(\RR_+)$.
\end{lem}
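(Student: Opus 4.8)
The goal is to show $\Psi^{(T)} \in \LL^2(\RR_+)$ for each fixed $T > 1$. The plan is to work with the unrescaled kernel $\Psi^T$ first and then transfer the integrability to $\Psi^{(T)} = \Psi^T(T\,\cdot)$ by a change of variables, noting that $\|\Psi^{(T)}\|_{\LL^2(\RR_+)}^2 = \frac{1}{T}\|\Psi^T\|_{\LL^2(\RR_+)}^2$, so it suffices to prove $\Psi^T \in \LL^2(\RR_+)$.

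First I would recall that $\Psi^T = \sum_{k\ge 1}(a_T)^k \phi^{\ast k}$ is the resolvent kernel associated with $\phi^T = a_T\phi$, and that since $\|\phi^T\|_1 = a_T < 1$ we have $\|\Psi^T\|_1 = \sum_{k\ge 1} a_T^k = \frac{a_T}{1-a_T} < \infty$. Next I would split the series as $\Psi^T = a_T \phi + a_T^2 \phi \ast \phi + (\text{rest})$, where the rest is $\sum_{k\ge 3} a_T^k \phi^{\ast k} = (\sum_{k\ge 1} a_T^k \phi^{\ast k}) \ast (a_T^2 \phi^{\ast 2}) = \Psi^T \ast (a_T^2 \phi^{\ast 2})$ by the semigroup property of convolution. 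Under Assumption \ref{assump: phi}, $\phi \in \LL^1$ and $\|\phi'\|_\infty < \infty$, $\|\phi'\|_1 < \infty$; in particular $\phi$ is bounded (as $\phi(t) = -\int_t^\infty \phi'(s)\,ds$ so $\|\phi\|_\infty \le \|\phi'\|_1$), hence $\phi \in \LL^1 \cap \LL^\infty \subset \LL^2$. Then $\phi\ast\phi \in \LL^\infty$ (convolution of two $\LL^2$ functions is bounded, indeed continuous and vanishing at infinity), and $\phi\ast\phi \in \LL^1$ since $\|\phi\ast\phi\|_1 \le \|\phi\|_1^2$; so $\phi\ast\phi \in \LL^1\cap\LL^\infty \subset \LL^2$. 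For the remainder term, $\Psi^T \ast (a_T^2\phi^{\ast 2})$ is the convolution of an $\LL^1$ function ($\Psi^T$) with an $\LL^2$ function ($a_T^2\phi^{\ast 2}$), hence lies in $\LL^2$ by Young's inequality with $\|\Psi^T \ast (a_T^2\phi^{\ast 2})\|_2 \le \|\Psi^T\|_1 \, a_T^2\|\phi^{\ast 2}\|_2$. Summing the three pieces gives $\Psi^T \in \LL^2(\RR_+)$, and the change of variables finishes it.

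The main obstacle, such as it is, is purely one of bookkeeping: one must justify that the series $\Psi^T = \sum_k a_T^k \phi^{\ast k}$ converges in $\LL^1$ (immediate from $\|\phi^{\ast k}\|_1 \le \|\phi\|_1^k = 1$ and $\sum a_T^k < \infty$), that the decomposition $\Psi^T = a_T\phi + a_T^2\phi^{\ast 2} + \Psi^T \ast (a_T^2\phi^{\ast 2})$ is legitimate (it follows from the identity $\Psi^T = \phi^T + \Psi^T \ast \phi^T$ iterated once, or directly from rearranging the absolutely convergent series), and that Young's inequality applies on $\RR_+$ (extend by zero to $\RR$). None of these points is delicate. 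An even shorter route, if one prefers: since $\Psi^T$ solves $\Psi^T = \phi^T + \phi^T \ast \Psi^T$ with $\phi^T \in \LL^1 \cap \LL^\infty$, iterating gives $\Psi^T = \phi^T + \phi^T \ast \phi^T + \phi^T \ast \phi^T \ast \Psi^T$, and each summand is in $\LL^2$ ($\phi^T \in \LL^2$; $\phi^T \ast \phi^T \in \LL^2$; $\phi^T \ast \phi^T \ast \Psi^T = (\phi^T \ast \phi^T) \ast \Psi^T \in \LL^2 \ast \LL^1 \subset \LL^2$), whence the claim; then rescale.
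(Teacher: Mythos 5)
Your proof is correct, and it takes a slightly different route from the paper's. Both arguments bootstrap from the resolvent identity $\Psi^T = \phi^T + \Psi^T \ast \phi^T$ and both must (at least implicitly) note that Assumption \ref{assump: phi} forces $\phi$ to be bounded. The paper's proof uses one iteration of the identity to get $\|\Psi^T\|_\infty \leq \|\phi^T\|_\infty(1+\|\Psi^T\|_1)$, and concludes $\Psi^{(T)} \in \LL^1 \cap \LL^\infty \subset \LL^2$; this buys the extra information that $\Psi^{(T)}$ is uniformly bounded (for a fixed $T$), which is in the same spirit as what is later needed in Lemma \ref{lem: sup_norm_Psi}. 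Your version instead lands directly in $\LL^2$ by decomposing $\Psi^T$ into explicit $\LL^2$ pieces and invoking Young's inequality. One small economy you overlooked: a single iteration already suffices. Since $\phi^T \in \LL^1 \cap \LL^\infty \subset \LL^2$ (by the observation $\|\phi\|_\infty \leq \|\phi'\|_1$, which you correctly justify and which the paper leaves implicit) and $\Psi^T \in \LL^1$, Young gives $\Psi^T \ast \phi^T \in \LL^2$ directly, so $\Psi^T = \phi^T + \Psi^T\ast\phi^T \in \LL^2$ without peeling off the second term $a_T^2\phi^{\ast 2}$. Also, your initial rescaling identity should read $\|\Psi^{(T)}\|_{\LL^2(\RR_+)}^2 = \tfrac{1}{T}\|\Psi^{T}\|_{\LL^2(\RR_+)}^2$, which you state correctly — a harmless but worth-keeping detail since it explains why boundedness of $\|\Psi^T\|_2$ for fixed $T$ transfers to $\Psi^{(T)}$.
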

Thanks to this lemma, we will be able to make use of Fourier analysis, particularly the Fourier-Plancherel equality. To prepare the proofs, for $z\in \RR$, we give the expressions of $\Fcal \psi^{(T),-}(z)$ and $\Fcal \rho(z)$:
$$\Fcal \Psi^{(T),-}(z) = \frac{\Fcal \phi^{(T),-}(z) }{1-T\Fcal \phi^{(T),-}(z)} =\frac{1}{T\left(1-T\Fcal \phi^{(T),-}(z)\right)}-\frac{1}{T} \quad \text{and} \quad \Fcal \rho^{-} (z) = \frac{1}{1-\i m z}.$$
\begin{proof}[Proof of Lemma \ref{lem: Psi_L2}]
    Let $T>1$. Using the fact that $\Psi^{T,-} = \phi^{T,-} + \Psi^{T,-} * \phi^{T,-}$ and $\Psi^{(T),-} = \Psi^{T,-}(T\cdot)$ and $\phi \in \LL^1(\RR_+)$, we deduce that $\Psi^{T,-} \in \LL^1(\RR_+)$ is well defined (see Theorem 1.1 of \cite{grossman_notes_1979}) and one can prove that $\Psi^{T,-}, \Psi^{(T),-} \in \LL^1(\RR_+)$ with
    $$\left\| \Psi^{T,-}\right\|_1 = \frac{a^{-}_T}{1-a^{-}_T} \quad \text{and} \quad  \left\| \Psi^{(T),-} \right\|_1 = a^{-}_T.$$
    Moreover, we also have:
    $$\left\| \Psi^{T,-} \right\|_{\infty} \leq \left\| \phi^{T,-} \right\|_{\infty} \left(1 + \left\| \Psi^{T,-}\right\|_{1} \right)< \infty .$$
    Hence, we conclude that $\Psi^{(T),-} \in \LL^1(\RR_+) \cap \LL^{\infty}(\RR_+) \subset \LL^2(\RR_+) $.
\end{proof}

In order to prove Proposition \ref{prop: conv_psi_rho_-}, we introduce Lemma \ref{lem: maj_eps_3_-} that allows us to control a key quantity for our quantification:
$$\eps^T(z) := \Fcal d^{T,-} (z)+ \frac{1}{T} =\frac{\i z m -1 + T\left(1- T\Fcal\phi^{(T),-}(z) \right) }{T\left(1- T\Fcal\phi^{(T),-}(z) \right)(\i zm -1)}, \quad z\in \RR_+ .$$

\begin{lem}
    \label{lem: maj_eps_3_-}
    There exist $C>0$ and $\alpha>0$ such that for any $T\geq 2$ and $|z|\leq \alpha T$, 
    $$\left|\eps^T(z) \right| \leq \frac{4}{T \sqrt{|z|^2m^2+1}} + \frac{ 4 |z| m_2}{T m \sqrt{|z|^2m^2+1}}.$$
\end{lem}

\begin{proof}
    Let us recall that $\int_{\RR_+} \phi(t) dt=1$, $m=\int_{\RR_+} t\phi(t) dt$ and $a^{-}_T= 1-\frac{1}{T}$. Then, for any real $z$, we have:
    \begin{align*}
      -\eps^T(z) &= \frac{T\int_0^{+\infty} \left(a^{-}_T e^{\frac{\i z}{T}t}-a^{-}_T -\frac{\i z}{T}t\right)\phi(t) dt}{T \left( 1-T\Fcal \phi^{(T),-}(z)\right)\left(\i zm-1\right)}\\
        &=\frac{-\int_0^{+\infty} \left( e^{\frac{\i z}{T}t}-1\right)\phi(t) dt}{T \left( 1-T\Fcal \phi^{(T),-}(z)\right)\left(\i zm-1\right)} + \frac{T\int_0^{+\infty} \left( e^{\frac{\i z}{T}t}-1-\frac{\i z}{T}t\right)\phi(t) dt}{T \left( 1-T\Fcal \phi^{(T),-}(z)\right)\left(\i zm-1\right)}.
    \end{align*}
    Using the fact that for any $\xi \in \i \RR$, $\left|e^\xi -1 \right| \leq \left|\xi\right|$ and $\left|e^\xi -1-\xi \right| \leq \left|\xi\right|^2 $ and $m_2=\int_{\RR_+} t^2\phi(t) dt < +\infty.$ we get:
    \begin{align*}
         \left| -\int_0^{+\infty} \left( e^{\frac{\i z}{T}t}-1\right)\phi(t) dt \right| &\leq \frac{|z|}{T}\int_0^{+\infty} t \phi(t) dt = \frac{|z|m}{T}\\
        \text{and  } \left| T\int_0^{+\infty} \left( e^{\frac{\i z}{T}t}-1-\frac{\i z}{T}t\right)\phi(t) dt\right| & \leq \frac{|z|^2}{T}\int_0^{+\infty} t^2 \phi(t) dt = \frac{|z|^2 m_2}{T}.
    \end{align*}
    Hence, 
    \begin{align*}
        \left| T\int_0^{+\infty} \left(a_T e^{\frac{\i z}{T}t}-a^{-}_T -\frac{\i z}{T}t\right)\phi(t) dt \right| &\leq \frac{|z|}{T}\int_0^{+\infty} t \phi(t) dt + \frac{|z|^2}{T}\int_0^{+\infty} t^2 \phi(t) dt.
    \end{align*}
    Moreover, according to \cite{jaisson_limit_2015} (Lemma 4.4), there exists $\alpha>0$ such that for any $|x|\leq \alpha$,
    $$\left|\text{Im} \Fcal \phi (x)\right| \geq \frac{m|x|}{2}.$$
    Thus, for any $|z|\leq \alpha T$, we have:
    $$\left| 1- T \Fcal \phi^{(T),-}(z)\right| = \left| 1 - a^{-}_T \Fcal \phi \left(\frac{z}{T} \right)\right| \geq a_T\left| \text{Im} \Fcal \phi \left(\frac{z}{T} \right)\right| \geq a^{-}_T \frac{m|z|}{2T}$$
    and so, using the fact that $a^{-}_T \geq \frac{1}{2}$ for $T\geq 2$, we obtain
    $$T\left| 1- T \Fcal \phi^{(T),-}(z)\right| \geq a^{-}_T\frac{m|z|}{2} \geq \frac{m|z|}{4}.$$
    Therefore, for any $|z|\leq \alpha T$,
    \begin{align*}
        \left| \eps^T(z)\right| \leq \frac{4}{T |\i z m-1|} + \frac{ 4 |z| m_2}{T m |\i z m-1|} \leq \frac{4}{T \sqrt{|z|^2m^2+1}} + \frac{ 4 |z| m_2}{T m \sqrt{|z|^2m^2+1}}
    \end{align*}
\end{proof}

We now give the proof of Proposition \ref{prop: conv_psi_rho_-}.

\begin{proof}[Proof of Proposition \ref{prop: conv_psi_rho_-}]
    We fix $T\geq 2$. Note that since $\Psi^{(T),-}\in \LL^2(\RR_+)$ (see Lemma \ref{lem: Psi_L2}), we have
    $$\left\|\Psi^{(T),-}-\rho^{-} \right\|_{\LL^2(0,1)} \leq \left\|\Psi^{(T),-}-\rho^{-} \right\|_2 = 2\pi\left\|\Fcal\Psi^{(T),-}-\Fcal\rho^{-} \right\|_2.$$
    According to Lemma \ref{lem: maj_eps_3_-}, there exists $\alpha>0$ such that for any $|z|\leq \alpha T$,
    $$\left|\eps^T(z) \right| \leq \frac{4}{T \sqrt{|z|^2 m^2+1}} + \frac{ 4 |z| m_2}{T m \sqrt{|z|^2m^2+1}}$$
    where $\eps^T$ is the same as Lemma \ref{lem: maj_eps_3_-}. Moreover, we make use of the following notations:
    $$\eps_1^{T} := \int_{|z|\geq \alpha T} \left| \Fcal \Psi^{(T),-}(z) - \Fcal \rho^{-}(z)\right|^2 dz \quad \text{and} \quad \eps_2^T :=\int_{|z|<\alpha T} \left| \Fcal \Psi^{(T),-}(z) - \Fcal \rho^{-}(z)\right|^2 dz.$$
    Let us focus on $\eps_1^T$. Using Corollary 4.3 in \cite{jaisson_limit_2015}, we have:
    $$\eps_1^{T} \leq C \int_{|z|\geq \alpha T} \left(\frac{1}{|z|}\wedge 1 \right)^2 dz \leq \frac{C}{T}.$$
    We now deal with $\eps_2^T$. This part of the proof relies on the upper bound of $\eps^T$ proved in Lemma \ref{lem: maj_eps_3_-}. First, note that
    $$\Fcal \Psi^{(T),-} = \frac{1}{T\left(1- T\Fcal\phi^{(T),-} \right)}-\frac{1}{T}.$$
    So, for $z\in \RR$, we have
    $$\left| \Fcal \Psi^{(T),-}(z) - \Fcal \rho^{-}(z) \right| \leq \frac{1}{T} + \left| \eps^T(z) \right|.$$
    Hence, for any $0<|z|\leq \alpha T$, we get
    \begin{align*}
        \left| \Fcal \Psi^{(T),-}(z) - \Fcal \rho^{-}(z) \right|^2 &\leq \frac{C}{T^2} + C\left| \eps^T(z) \right|^2 \leq \frac{C}{T^2} + \frac{C}{T^2 (|z|^2m^2+1)} + \frac{C |z|^2 m_2^2}{T^2 m^2 (|z|^2m^2+1)}.
    \end{align*}
    Thus, we get
    \begin{align*}
        \left| \eps_2^T \right| &\leq \int_{|z|<\alpha T} \frac{C}{T^2} + \frac{C}{T^2 (|z|^2m^2+1)} + \frac{C |z|^2 m_2^2}{T^2 m^2 (|z|^2m^2+1)} dz \leq \frac{C}{T}.
    \end{align*}
\end{proof}

\begin{remark}
One can observe that we refer to results presented in \cite{jaisson_limit_2015} without assuming $\sup_{T>1} \|\Psi^{(T),-}\|_{\infty} < +\infty$, which is assumed to hold in \cite{jaisson_limit_2015}. However, the proofs of Lemma 4.4 and Corollary 4.3 in \cite{jaisson_limit_2015} do not rely on this supremum assumption. Therefore, we can apply these results without requiring this condition to hold extending the convergence result.
\end{remark}

\subsubsection{Proof of Proposition \ref{prop: conv_psi_rho_+}}
\label{subs: proof conv_psi_rho_+}
Inspired by the work in \cite{liu_scaling_2024}, we will prove Proposition \ref{prop: conv_psi_rho_+} by using Malthusian parameter. To do so, we recall some element presented in \cite{liu_scaling_2024} including the definition of such parameter. For $T\geq 2$ Malthusian parameter $b_T>0$ are defined in the literature by the following relation:
$$\int_0^\infty e^{-b_T s} \phi^{T,+}(s) ds = \frac{1}{a^{+}_{T}}<1.$$
Thus, we will proved the result for the case $a_T >1$ by adapting the proof of Proposition \ref{prop: conv_psi_rho_-} (which deals with the case $a_T <1$) and by using the Malthusian parameter. We thereupon set for $t\in \RR_+$,
$$\tilde{\phi}^{T,-}(t) := e^{-b_T t} \phi^{T,+}(t) \quad \text{and} \quad \tilde{\Psi}^{(T),-} := e^{-b_T T t} \Psi^{(T),+}(t) .$$
In particular, one can notice that for any $k\geq 1$,
$$ e^{-b_Tt} (\phi^{T,+})^{*k}(t)= (\tilde{\phi}^{T,-})^{*k}(t) \quad \text{and} \quad  \tilde{\Psi}^{(T),-}(t)=\sum_{k=1}^{\infty} (\tilde{\phi}^{T,-})^{*k}(Tt).$$
Note that convergence of the serie is due to $\left\|\tilde{\phi}^{T,-}\right\|_{L^1(\RR^+)}=\frac{1}{a_T^+} < 1.$\\
For the sake of completeness, we also define $\tilde{m}_{T}$ as the first moment of $\tilde{\phi}^{T,-}$ that is
$$ \tilde{m}_{T} :=\int_0^\infty s  \tilde{\phi}^{T,-}(s) ds =\int_0^\infty s e^{-b_T s} \phi^{T,+}(s) ds.$$
We now recall the different convergence phenomenons that have been proved in \cite{liu_scaling_2024}. Indeed, Proof of Lemma 2.2 in \cite{liu_scaling_2024} claims that
\begin{equation}
\label{eq: cvg_malthusian_param}
    \lim_{T\to +\infty} b_T = 0,\quad \lim_{T\to +\infty} Tb_T = \frac{2}{m} \quad \text{and} \quad \lim_{T\to +\infty} \tilde{m}_T = m.
\end{equation}

With these definitions, we are able to bound the $\LL^2(0,1)$ norm of $\Psi^{(T),+}- \rho^{+}$ above by the  $\LL^2(0,1)$ norm of $\left\| \tilde{\Psi}^{(T),-}- \tilde{\rho}^{T, -}\right\|_{\LL^2(0,1)}$ where $\tilde{\rho}^{T, -}(t) := e^{-b_T T t} \rho^{+}(t) = \frac{1}{m}e^{-t\left(b_T T -\frac{1}{m} \right)}$. That is
\begin{align}
    \label{eq: ineq_Psi_+}
    \left\| \Psi^{(T),+}- \rho^{+}\right\|_{\LL^2(0,1)} \leq e^{b_T T} \left\| \tilde{\Psi}^{(T),-}- \tilde{\rho}^{T, -}\right\|_{\LL^2(0,1)}.
\end{align}

This allows us to realize computations using the Fourier transform of $\tilde{\Psi}^{(T),-}$. In particular, we get the following lemma:
\begin{lem}
\label{lem: Psi_L2_2}
For any $T>1$, $\tilde{\Psi}^{(T),-} \in \LL^2(\RR_+)$ and
$$\Fcal \tilde{\Psi}^{(T),-}(z) = \frac{\Fcal \tilde{\phi}^{(T),-}(z) }{1-T\Fcal \tilde{\phi}^{(T),-}(z)}, \quad z\in \RR.$$
\end{lem}
\begin{proof}
    Let $T>1$. Since $\tilde{\phi}^{T,-} \in \LL^1(\RR_+)$ and since for any $\xi \geq 0$, $$\int_0^{\infty} e^{-\xi s}\tilde{\phi}^{T,-}(s) ds \leq \left\| \tilde{\phi}^{T,-} \right\|_{1} = \frac{1}{a^{+}_T} <1 ,$$ 
    Theorem 1.1 of \cite{grossman_notes_1979} yields $\tilde{\Psi}^{T,-}\in \LL^1(\RR_+)$ and so $\tilde{\Psi}^{(T),-}\in \LL^1(\RR_+)$.
    Moreover, we also have:
    \begin{equation}
    \label{eq: maj_infty_psi_tilde_T}
        \left\| \tilde{\Psi}^{T,-} \right\|_{\infty} \leq \left\| \tilde{\phi}^{T,-} \right\|_{\infty} \left(1 + \left\| \tilde{\Psi}^{T,-}\right\|_{1} \right)< +\infty .
    \end{equation}
    Hence, we conclude that $\tilde{\Psi}^{(T),-} \in \LL^1(\RR_+) \cap \LL^{\infty}(\RR_+) \subset \LL^2(\RR_+) $. 
    On the other hand, the Fourier transform of $\tilde{\Psi}^{(T),-}$ can be deduce using the equality
    $$\tilde{\Psi}^{(T),-} = \tilde{\phi}^{(T),-} + T \tilde{\phi}^{(T),-} \ast \tilde{\Psi}^{(T),-}.$$
\end{proof}
In order to give the convergence rate of our $d^{T,+} = \Psi^{(T),+}-\rho^{+}$, we need to focus on the Fourier transform of $\tilde{\Psi}^{(T),-}$ and $\tilde{\rho}^{T,-}$. The following lemma is similar to Proposition 5.1 in \cite{xu_diffusion_2024} which can be seen as an extension of Lemma 4.4 in \cite{jaisson_limit_2015}.
\begin{lem}
\label{lem: maj_min_tsfo_Psi_+}
    There exists $C>0$ such that for any $T>1$,
    $$\left| \Fcal \tilde{\phi}^{T,-} (z) \right| \leq C \left(\frac{1}{|z|}\wedge 1 \right), \quad \forall z\in \RR.$$
    Moreover, there exist $C>0$ and $T_0 >1$ such that for any $T>T_0$,
    $$\left|1- \Fcal\tilde{\phi}^{T,-} (z) \right| \geq C \left( |z| \wedge 1\right), \quad \forall z \in \RR.$$
\end{lem}
\begin{proof}
    The first inequality is exactly the result obtain in Proposition 5.1 of \cite{xu_diffusion_2024}. In particular, since the total variation of $\tilde{\psi}^{T,-}$ is uniformly bounded with respect to $T$, we get that there exists $C>0$ such that for any $z\in \RR$ and any $T>1$,
    $$\left| \Fcal \tilde{\phi}^{T,-} (z) \right| \leq C \left(\frac{1}{|z|}\wedge 1 \right).$$
    The second inequality require more computations.\\
    According to the proof of Lemma 4.4 in \cite{jaisson_limit_2015}, there exists $\alpha >0$ and $\eps>0$ such that if $|z| \leq \alpha$,
    $$\left| \text{Im}\left( \Fcal \phi \right)(z) \right| \geq \frac{m}{2}|z|$$
    and if $|z| \geq \alpha$,
    $$\left| 1- \Fcal \phi (z) \right| \geq \epsilon .$$
    We first deal with the case $|z| \leq \alpha$. Notice that since $a^{+}_T>1$, we have
    \begin{align*}
        \left|1- \Fcal\tilde{\phi}^{T,-} (z) \right| &\geq \left|\text{Im}\left( \int_0^\infty e^{itz}e^{-b_T t} \phi^{T,+}(t) dt \right) \right| = \left| \int_0^\infty e^{-b_T t} \sin(zt) \phi(t) dt \right|.
    \end{align*}
    By triangular inequality, we get
    $$\left|1- \Fcal\tilde{\phi}^{T,-} (z) \right| \geq \left|\text{Im}\left( \Fcal \phi \right)(z) \right| - \left| \int_0^\infty \left(1-e^{-b_T t}\right) \sin(zt) \phi(t) dt \right|.$$
    Moreover, since $|1- e^{-x}|\leq x$ for any $x\geq 0$ and $\left| \sin(u)\right| \leq |u|$, we obtain that
    $$\left| \int_0^\infty \left(1-e^{-b_T t} \right) \sin(zt) \phi(t) dt\right| \leq |z| b_T \int_0^\infty t^2 \phi(t) dt = |z| m_2 b_T .$$
    In addition, using the fact that $\lim_{T\to \infty} b_T =0 $, there exists $T_0>1$ such that for any $T>T_0$, $b_T m_2 \leq \frac{m}{4}$. Hence, for any $T>T_0$,
    \begin{equation}
    \label{eq: maj_denom_tsfo_psi_1}
    \left|1- \Fcal\tilde{\phi}^{T,-} (z) \right| \geq \frac{m}{2}|z| - \frac{m}{4}|z| = \frac{m}{4}|z|, \quad \forall |z|\leq \alpha .
    \end{equation}
    We now consider the case $|z| \geq \alpha$ by noticing that
    $$\left|1- \Fcal\tilde{\phi}^{T,-} (z) \right| \geq \left|1- \Fcal\phi(z) \right| - \left|\int_0^\infty \left(a^{+}_T e^{-b_T t} -1 \right) \phi(t) e^{izt} dt\right|.$$
    Moreover, thanks to $a^{+}_T = 1+ \frac{1}{T}$, we also have
    $$\left| a^{+}_T e^{-b_T t} -1 \right| \leq \left| a^{+}_T -1 \right| + a^{+}_T \left|e^{-b_T t} - 1\right| \leq \frac{1}{T} + a^{+}_T b_T t.$$
    We choose $T_0>1$ large enough in order to get for any $T>T_0$
    $$\frac{1}{T} + a^{+}_T b_T m \leq \frac{\eps}{2}.$$
    Thereupon, we obtain for any $T>T_0$
    $$\left|\int_0^\infty \left(a^{+}_T e^{-b_T t} -1 \right) \phi(t) e^{izt} dt\right| \leq \int_0^\infty \left( \frac{1}{T} + a^{+}_T b_T t \right) \phi(t) dt = \frac{1}{T} + a^{+}_T b_T m \leq \frac{\eps}{2}. $$
    Thus, for any $T>T_0$, we compute
    \begin{equation}
    \label{eq: maj_denom_tsfo_psi_2}
        \left|1- \Fcal\tilde{\phi}^{T,-} (z) \right| \geq \eps - \frac{\eps}{2} = \frac{\eps}{2}, \quad |z| \geq \alpha.
    \end{equation}
    Therefore, putting together equalities \eqref{eq: maj_denom_tsfo_psi_1} and \eqref{eq: maj_denom_tsfo_psi_2}, we conclude that there exist $C>0$ and $T_0>1$ such that for any $T>T_0$, and for any $z \in \RR$,
    $$\left|1- \Fcal\tilde{\phi}^{T,-} (z) \right| \geq C \left( |z| \wedge 1\right).$$
\end{proof}

A last technical result is required in order to prove Proposition \ref{prop: conv_psi_rho_+}. The latter gives an upper bound of $\tilde{\eps}^T$ which is defined by
$$\tilde{\eps}^T(z) := \left[\Fcal \tilde{\Psi}^{(T),-} (z) - \Fcal \tilde{\rho}^{T,-} (z) \right]+ \frac{1}{T}$$
with $\tilde{\Psi}^{(T),-}(t) = e^{-b_T T t} \Psi^{(T),+}(t)$ and $\tilde{\rho}^{T,-} (t) = e^{-b_T T t} \rho^{+}(t)$.
\begin{lem}
    \label{lem: maj_eps_3_+}
    For any $\alpha \in \left( 0, \frac{m}{8m_2}\right)$, there exist $C>0$ and $T_0>4$ such that for any $T\geq T_0$ and $|z|\leq \alpha T$, 
    $$\left|\tilde{\eps}^T(z) \right| \leq \frac{C}{T}.$$
\end{lem}
\begin{proof}
    First of all, note that in this proof, $T_0$ may change from line to line. We now start the proof. Since for any $z\in \RR$,
    $$\hspace*{-0.5cm}\Fcal \tilde{\Psi}^{(T),-}(z) = \frac{\Fcal \tilde{\phi}^{(T),-}(z) }{1-T\Fcal \tilde{\phi}^{(T),-}(z)} =\frac{1}{T\left(1-T\Fcal \tilde{\phi}^{(T),-}(z)\right)}-\frac{1}{T} \quad \text{and} \quad \Fcal \tilde{\rho}^{T,-} (z) = \frac{1}{m b_T T -1 -\i m z},$$ 
    we compute
    $$\tilde{\eps}^T(z) =\frac{\i z m + 1-b_T T m + T\left(1- T\Fcal\tilde{\phi}^{(T),-}(z) \right) }{T\left(1- T\Fcal\tilde{\phi}^{(T),-}(z) \right)(\i z m + 1-b_T T m)}, \quad z\in \RR .$$
    Using that and $a^{+}_T= 1+\frac{1}{T}$ (and recalling that $\left\| \phi \right\|_1=1$), we get
    \begin{align*}
      -\tilde{\eps}^T(z) &= \frac{T\int_0^{+\infty} \left(a^{+}_T e^{\left(\frac{\i z}{T}-b_T\right)t}-a^{+}_T -\left(\frac{\i z}{T}-b_T\right)t \right)\phi(t) dt}{T \left( 1-T\Fcal \tilde{\phi}^{(T),-}(z)\right)\left(\i z m + 1-b_T T m\right)}\\
        &=\frac{\int_0^{+\infty} \left( e^{\left(\frac{\i z}{T}-b_T\right)t}-1\right)\phi(t) dt}{T \left( 1-T\Fcal \tilde{\phi}^{(T),-}(z)\right)\left(\i z m + 1-b_T T m\right)} + \frac{T\int_0^{+\infty} \left( e^{\left(\frac{\i z}{T}-b_T\right)t}-1-\left(\frac{\i z}{T}-b_T\right)t\right)\phi(t) dt}{T \left( 1-T\Fcal \tilde{\phi}^{(T),-}(z)\right)\left(\i z m + 1-b_T T m\right)}.
    \end{align*}
    Moreover, it holds that for any $\xi \in \CC_{-} := \left\{ x+\i y \in \CC \mid x\leq 0\right\}$, $\left|e^\xi -1 \right| \leq \left|\xi\right|$ and $\left|e^\xi -1-\xi \right| \leq \left|\xi\right|^2 $. Thus, we get:
    \begin{align*}
         \left| -\int_0^{+\infty} \left( e^{\left(\frac{\i z}{T}-b_T\right)t}-1\right)\phi(t) dt \right| &\leq \frac{|\i z-b_T T|}{T}\int_0^{+\infty} t \phi(t) dt \leq \frac{\left(|z|+b_T T\right)m}{T},\\
        \left| T\int_0^{+\infty} \left( e^{\left(\frac{\i z}{T}-b_T\right)t}-1-\left(\frac{\i z}{T}-b_T\right)t\right)\phi(t) dt\right| & \leq \frac{|\i z-b_T T|^2}{T}\int_0^{+\infty} t^2 \phi(t) dt = \frac{\left(|z|^2+|b_T T|^2 \right)m_2}{T}.
    \end{align*}
    Hence, 
    \begin{align*}
        \left| T\int_0^{+\infty} \left(a^{+}_T e^{\left(\frac{\i z}{T}-b_T\right)t}-a^{+}_T -\left(\frac{\i z}{T}-b_T\right)t\right)\phi(t) dt \right| &\leq \frac{\left(|z|+b_T T\right)m}{T} + \frac{\left(|z|^2+|b_T T|^2 \right)m_2}{T}.
    \end{align*}
We deduce that for any $z\in \RR$ and $T>1$,
\begin{align*}
    \left|T \left(1- T\Fcal \tilde{\phi}^{(T),-}\left(z \right)\right) \right| & = \left|T\int_0^{+\infty} \left(a^{+}_T e^{\frac{\i z}{T}t}-a^{+}_T -\frac{\i z}{T}t\right)\phi(t) dt  - \left(\i z m + 1-b_T T m\right)\right|\\
    &\geq \left|\i z m + 1-b_T T m\right| - \left|T\int_0^{+\infty} \left(a^{+}_T e^{\frac{\i z}{T}t}-a^{+}_T -\frac{\i z}{T}t\right)\phi(t) dt\right|\\
    &\geq \frac{m}{2}|z| + \frac{\sqrt{2}}{2}\left| m b_T T - 1 \right| - \frac{\left(|z|+b_T T\right)m}{T} - \frac{\left(|z|^2+|b_T T|^2 \right)m_2}{T}.
\end{align*}
Here, we have employed the inequality 
$$|m \i z + b_T T m -1| \geq \frac{m}{2}|z| + \frac{\sqrt{2}}{2}\left| m b_T T - 1 \right|.$$ 
Thus, there is some $T_0>4$ such that for any $T>T_0$ and $|z|\leq \alpha T$, 
\begin{align*}
    \left|T \left(1- T\Fcal \tilde{\phi}^{(T),-}\left(z \right)\right) \right| &\geq |z|\left(\frac{m}{2}-\frac{m}{T}-\alpha m_2 \right) + \frac{\sqrt{2}}{2}\left| m b_T T - 1 \right| - b_T m - \frac{|b_T T|^2 m_2}{T}\\
    &\geq |z|\left(\frac{m}{4}-\alpha m_2 \right) + \frac{\sqrt{2}}{2}\left| m b_T T - 1 \right| - b_T m - \frac{|b_T T|^2 m_2}{T}.
\end{align*}
Since $\alpha \in \left( 0, \frac{m}{8m_2}\right)$, we get for any $T>T_0$ and $|z|\leq \alpha T$,
\begin{align*}
    \left|T \left(1- T\Fcal \tilde{\phi}^{(T),-}\left(z \right)\right) \right| &\geq |z|\frac{m}{8} + \frac{\sqrt{2}}{2}\left| m b_T T - 1 \right| - b_T m - \frac{|b_T T|^2 m_2}{T}.
\end{align*}
In addition, we know that $\lim_{T\to +\infty} b_T + \frac{|b_T T|^2 m_2}{T} = 0$ which, combined with $\lim_{T\to +\infty} \left|1-b_T T m \right| = 1$, gives the existence of some $T_0>4$ such that for any $T>T_0$,
$$b_T m + \frac{|b_T T|^2 m_2}{T} \leq \left( \frac{\sqrt{2}}{2}-\frac{1}{8} \right)\left|1-b_T T m \right|.$$
This implies that for any $T>T_0$ and $|z| \leq \alpha T$,
\begin{align*}
    \left|T \left(1- T\Fcal \tilde{\phi}^{(T),-}\left(z \right)\right) \right| &\geq \frac{m}{8}|z|+ \frac{1}{8}\left| m b_T T - 1 \right|.
\end{align*}
Therefore, for any $T>T_0$ and $|z|\leq \alpha T$,
\begin{equation}
    \label{eq: maj_eps_tilde}
    \left|\tilde{\eps}^{T}(z)\right| \leq \frac{8\left(|z|+b_T T\right)m}{T \left(m|z|+ \left| m b_T T - 1 \right|\right)\left|\i z m + 1-b_T T m\right|} + \frac{8\left(|z|^2+|b_T T|^2 \right)m_2}{T \left(m|z|+ \left| m b_T T - 1 \right|\right)\left|\i z m + 1-b_T T m\right|}.
\end{equation}
Let us focus on the first term on the right hand side of the equation. In particular, since $\lim_{T\to +\infty} m b_T T = 2$, there exists $T_0>4$ such that for any $T>T_0$,
$$m b_T T \leq \frac{5}{2} \quad \text{and} \quad \left|mb_T T -1\right| \geq \frac{1}{2}.$$
This gives that for any $T>T_0$ and $|z|\leq \alpha T$,
$$\frac{8\left(|z|+b_T T\right)m}{T \left(m|z|+ \left| m b_T T - 1 \right|\right)\left|\i z m + 1-b_T T m\right|} \leq \frac{8\left(|z|m+5/2\right)}{T \left(m|z|+ 1/2 \right)\left|\i z m + 1-b_T T m\right|}.$$
And so using the inequality
$$|m \i z + b_T T m -1| \geq \frac{m}{2}|z| + \frac{1}{2}\left| m b_T T - 1 \right| \geq \frac{m}{2}|z| + \frac{1}{4},$$
we obtain
\begin{equation*}
    \frac{8\left(|z|+b_T T\right)m}{T \left(m|z|+ \left| m b_T T - 1 \right|\right)\left|\i z m + 1-b_T T m\right|} \leq \frac{16\left(|z|m+5/2\right)}{T \left(m|z|+ 1/2 \right)\left(m|z|+ 1/2 \right)}.
\end{equation*}
Hence, for any $T>T_0$ and $|z|\leq \alpha T$, we get
\begin{equation}
\label{eq: maj_eps_tilde_1}
    \frac{8\left(|z|+b_T T\right)m}{T \left(m|z|+ \left| m b_T T - 1 \right|\right)\left|\i z m + 1-b_T T m\right|} \leq \frac{C}{T}.
\end{equation}
Similarly, for any $T>T_0$ and $|z|\leq \alpha T$, we get
\begin{equation}
\label{eq: maj_eps_tilde_2}
    \frac{8\left(|z|^2+|b_T T|^2 \right)m_2}{T \left(m|z|+ \left| m b_T T - 1 \right|\right)\left|\i z m + 1-b_T T m\right|} \leq \frac{16\left(m^2|z|^2+25/4\right)m_2}{T m^2\left(m|z|+ 1/2\right)^2} \leq \frac{C}{T}
\end{equation}
Combining \eqref{eq: maj_eps_tilde_1} and \eqref{eq: maj_eps_tilde_2} in \eqref{eq: maj_eps_tilde}, we obtain that there exist $C>0$ and $T_0>4$ such that for any $T>T_0$ and $|z|\leq \alpha T$, 
$$\left| \tilde{\eps}^T (z) \right| \leq \frac{C}{T}.$$
\end{proof}

\begin{proof}[Proof of Proposition \ref{prop: conv_psi_rho_+}]
    We first recall to the reader inequality \eqref{eq: ineq_Psi_+} which is the starting point of our proof:
    $$\left\| \Psi^{(T),+} - \rho^{+}\right\|_{\LL^2(0,1)} \leq e^{b_T T} \left\| \tilde{\Psi}^{(T),-} - \tilde{\rho}^{T,-}\right\|_{\LL^2(0,1)}.$$
    Thanks to Lemma \ref{lem: Psi_L2_2}, we are able to make use of Fourier Plancherel equality which yields
    $$\left\| \tilde{\Psi}^{(T),-} - \tilde{\rho}^{T,-}\right\|_{\LL^2(0,1)} \leq \left\| \tilde{\Psi}^{(T),-} - \tilde{\rho}^{T,-}\right\|_{2} = 2\pi \left\| \Fcal\tilde{\Psi}^{(T),-} - \Fcal\tilde{\rho}^{T,-}\right\|_{2}.$$
    Thanks to Lemma \ref{lem: maj_eps_3_+}, for some $\alpha \in \left( 0, \frac{m}{8m_2}\right)$ there exist $C>0$ and $T_0>4$ such that for any $T>T_0$ and $|z|\leq \alpha T$,
    $$\left| \tilde{\Psi}^{(T),-}(z) - \tilde{\rho}^{T,-}(z) \right| \leq \left|\tilde{\eps}^T(z) \right|+\frac{1}{T} \leq \frac{C}{T}.$$
    Hence,
    \begin{align*}
        \hspace*{-0.5cm}\left\| \Fcal\tilde{\Psi}^{(T),-} - \Fcal\tilde{\rho}^{T,-}\right\|_{2}^2 &= \int_{|z|\leq \alpha T} \left| \Fcal\tilde{\Psi}^{(T),+}(z) - \Fcal\tilde{\rho}^{T,+}(z) \right|^2 dz +  \int_{|z|> \alpha T} \left| \Fcal\tilde{\Psi}^{(T),+}(z) - \Fcal\tilde{\rho}^{T,+}(z) \right|^2 dz \\
        &\leq \frac{C}{T} + \int_{|z|> \alpha T} \left| \Fcal\tilde{\Psi}^{(T),+}(z) - \Fcal\tilde{\rho}^{T,+}(z) \right|^2 dz.
    \end{align*}
    On the other hand, thanks to Lemma \ref{lem: maj_min_tsfo_Psi_+}, it can be proved that there exist $C>0$ and $T_0>4$ such that for any $T>T_0$,
    $$\left| \Fcal\tilde{\Psi}^{(T),-}(z) - \Fcal\tilde{\rho}^{T,-}(z) \right| \leq C \left(\frac{1}{|z|}\wedge 1 \right), \quad \forall z\in \RR.$$
    In particular, this inequality yields, for $T>T_0$,
    $$\int_{|z|> \alpha T} \left| \Fcal\tilde{\Psi}^{(T),-}(z) - \Fcal\tilde{\rho}^{T,-}(z) \right|^2 dz \leq C \int_{|z|> \alpha T} \left(\frac{1}{|z|^2}\wedge 1 \right) dz \leq \frac{C}{T}.$$
    Therefore, there exists $C>0$ such that for any $T>T_0$, 
    $$\left\| \Psi^{(T),+} - \rho^{+}\right\|_{\LL^2(0,1)} \leq \frac{C}{\sqrt{T}}.$$
    Besides, for $T\leq T_0$, we have
    $$\left\| \Psi^{(T),+} - \rho^{+} \right\|_{\LL^2(0,1)} \leq \left\| \Psi^{(T),+}\right\|_{\LL^2(0,1)}+ \left\|\rho^{+} \right\|_{\LL^2(0,1)} \leq e^{b_T T}\left\| \Psi^{(T),-}\right\|_{\LL^2(0,1)}+ \left\|\rho^{+} \right\|_{\LL^2(0,1)}.$$
    Note that 
    $$\left\| \Psi^{(T),-}\right\|_{\LL^2(0,1)} \leq \left\| \Psi^{T,-}\right\|_{\infty}.$$
    Now, remember Inequality \eqref{eq: maj_infty_psi_tilde_T}:
    $$\left\| \tilde{\Psi}^{T,-} \right\|_{\infty} \leq \left\| \tilde{\phi}^{T,-} \right\|_{\infty} \left(1 + \left\| \tilde{\Psi}^{T,-}\right\|_{1} \right).$$
    Since 
    $$\left\| \tilde{\phi}^{T,-} \right\|_{\infty} \leq \left\| \phi^{T,+} \right\|_{\infty} \leq 2 \left\| \phi \right\|_{\infty}\quad \text{and} \quad \left\| \tilde{\Psi}^{T,-}\right\|_{1} \leq \frac{\left\| \tilde{\phi}^{T,-} \right\|_{1}}{1-\left\| \tilde{\phi}^{T,-} \right\|_{1}}\leq \frac{1/a^+_T}{1-1/a^+_T}= T,$$
    we get 
    $$\left\| \Psi^{(T),-}\right\|_{\LL^2(0,1)} \leq C T \leq CT_0.$$
    Therefore, there exist $\tilde{C}>0$ and $C>0$ such that for $T\leq T_0$, we have
    $$\left\| \Psi^{(T),+} - \rho^{+} \right\|_{\LL^2(0,1)} \leq \frac{\tilde{C}}{\sqrt{T_0}} \leq \frac{C}{\sqrt{T}}.$$ 
\end{proof}

\subsubsection{Some inequalities on  \texorpdfstring{$d^{T,\natural}$}{}}
We gather here some inequalities on $\Psi^{(T),\natural}$ and $d^{T,\natural}$. In particular, we leverage Propositions \ref{prop: conv_psi_rho_-} and \ref{prop: conv_psi_rho_+} to establish regularity results on $\Psi^{(T),\natural}$, which allow us to remove the assumption $\sup_{T>1} \|\Psi^{(T),\natural}\|_{\infty} < +\infty$. Indeed, since our analysis of $d^{T,\natural}$ and $\Psi^{(T),\natural}$ is restricted to $[0,1]$, requiring the supremum norm to be uniformly bounded on $\RR_+$ is unnecessarily strong. Instead, we show directly that it remains uniformly bounded on $[0,1]$. This result is detailed in the following lemma:
\begin{lem}
    \label{lem: sup_norm_Psi}
    There exists $C>0$ such that for any $T>2$, we have $$i)\;\left\| \Psi^{(T),-}\right\|_{\LL^\infty (0,1)} \leq C, \quad ii)\;\left\| \Psi^{(T),0}\right\|_{\LL^\infty (0,1)} \leq C \quad \text{and} \quad iii)\;\left\| \Psi^{(T),+}\right\|_{\LL^\infty (0,1)} \leq C.$$
\end{lem}
\begin{proof}
    We start this proof by a remark that will be useful for our reasoning. Indeed, note that if we consider for $\natural \in \{-,0,+ \}$ the functions $\phi_{\exp}^{(T),\natural}: t\mapsto a^\natural_T \frac{1}{m}e^{-tT/m}$ then we get
    $$a^\natural_T \rho^\natural = \phi_{\exp}^{(T),\natural} + a^\natural_T T \rho^\natural * \phi_{\exp}^{(T),\natural}.  $$
    Moreover, we also have
    $$\Psi^{(T),\natural} = \phi^{(T),\natural}+ T \Psi^{(T),\natural}*\phi^{(T),\natural}.$$
    So we can write
    $$\Psi^{(T),\natural} - a^\natural_T \rho^\natural = \phi^{(T),\natural} -\phi_{\exp}^{(T),\natural} +T \left[   \Psi^{(T),\natural} - a^\natural_T\rho^\natural \right]\ast \phi^{(T),\natural}+ a^\natural_T T \rho^\natural \ast \left[\phi^{(T),\natural} - \phi_{\exp}^{(T),\natural}\right].$$
    This yields
    \begin{align*}
        \left\| \Psi^{(T),\natural} - a^\natural_T \rho^\natural \right\|_{\LL^\infty (0,1)} \leq & \left\| \phi^{(T),\natural} - \phi_{\exp}^{(T),\natural}\right\|_{\LL^\infty (0,1)}+T \left\| \left(   \Psi^{(T),\natural} -a^\natural_T \rho^\natural \right)\ast \phi^{(T),\natural}\right\|_{\LL^\infty (0,1)}\\
        &+a^\natural_T T \left\| \rho^\natural \ast \left(\phi^{(T),\natural} -\phi_{\exp}^{(T),\natural} \right)\right\|_{\LL^\infty (0,1)}.
    \end{align*}
    The first term of the right hand side of the inequality is bounded uniformly in $T$ by Assumption \ref{assump: phi}. In addition, using Cauchy-Schwarz Inequality and making a change of variable, we get
    \begin{align*}
        \left\| \Psi^{(T),\natural} - a^\natural_T \rho \right\|_{\LL^\infty (0,1)} \leq & C+ T \sup_{t\in [0,1]}  \int_0^t \left| \left(   \Psi^{(T),\natural} -a^\natural_T \rho^\natural \right)(t-s) \phi^{(T),\natural}(s)\right| ds \\
        &+ a^\natural_T T \left\| \rho^\natural \right\|_{\LL^{\infty} (0,1)} \int_0^1 \left| \left(\phi^{(T),\natural} - \phi_{\exp}^{(T),\natural} \right)(s) \right| ds\\
        \leq& C+\sqrt{T}\left\| \Psi^{(T),\natural} -a^\natural_T \rho^\natural \right\|_{\LL^2(0,1)}\left\| \phi \right\|_{2} \\
        &+ C T\left\| \rho^\natural \right\|_{\LL^{\infty} (0,1)} \left\| \phi^{(T),\natural} -\phi_{\exp}^{(T),\natural} \right\|_{\LL^1 (0,1)} .
    \end{align*}
    Moreover, by Assumption \ref{assump: phi}, we obtain
        \begin{align*}
        \left\| \Psi^{(T),\natural} - a^\natural_T \rho^\natural \right\|_{\LL^\infty (0,1)} \leq& C+ C \sqrt{T}\left\| \Psi^{(T),\natural} -a^\natural_T \rho^\natural \right\|_{\LL^2(0,1)}\end{align*} 
    Therefore, using Proposition \ref{prop: conv_psi_rho_-}, \ref{prop: conv_psi_rho_0} or \ref{prop: conv_psi_rho_+} according the considered case, we have for $T$ large enough
            \begin{align*}
        \left\| \Psi^{(T),\natural} - a^\natural_T \rho^\natural \right\|_{\LL^\infty (0,1)} \leq& C .\end{align*}
    We conclude the proof using the triangular inequality.
\end{proof}

\begin{lem}
\label{lem: result d'}
    There exists $C>0$ such that for any $T>2$, $$ \left\| (d^{T,\natural})'\right\|_{\LL^\infty(0,1)} \leq C T \quad \text{and} \quad \left\| (d^{T,\natural})'\right\|_{ \LL^2(0,1)} \leq C \sqrt{T},$$
    where $\natural \in \{ -,0,+\}$.
\end{lem}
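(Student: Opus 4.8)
Write $d^T=\Psi^{(T)}-\rho$; since $\rho'=-\tfrac1m\rho$ is explicit with $\|\rho'\|_{\LL^\infty(0,1)}+\|\rho'\|_{\LL^2(0,1)}\le C$, it suffices to bound $(\Psi^{(T)})'$ in $\LL^\infty(0,1)$ and $\LL^2(0,1)$. I would start from the renewal equation $\Psi^{(T)}=\phi^{(T)}+T\,\Psi^{(T)}\ast\phi^{(T)}$ from the proof of Lemma~\ref{lem: sup_norm_Psi}, differentiate it, and keep track of the boundary value $\phi^{(T)}(0)=a_T\phi(0)$, which gives
\begin{equation*}
  (\Psi^{(T)})' = r^T + T\,\Psi^{(T)}\ast r^T,\qquad r^T := (\phi^{(T)})' + Ta_T\phi(0)\,\phi^{(T)},
\end{equation*}
the resolvent of the kernel $T\phi^{(T)}$ being again $T\Psi^{(T)}$ (same computation as in \eqref{eq:def_psiT}). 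From $\phi^{(T)}(t)=a_T\phi(Tt)$, the scaling $\int_0^1|h(Tt)|^p\,dt=T^{-1}\int_0^T|h|^p$, and Assumption~\ref{assump: phi} (which in particular gives $\phi,\phi'\in\LL^2(\RR_+)$), one records $\|(\phi^{(T)})'\|_{\LL^\infty(0,1)}\le T\|\phi'\|_\infty$, $\|(\phi^{(T)})'\|_{\LL^1(0,1)}\le\|\phi'\|_1$, $\|\phi^{(T)}\|_{\LL^\infty(0,1)}\le\|\phi\|_\infty$, $\|\phi^{(T)}\|_{\LL^1(0,1)}\le a_T/T$, hence $\|r^T\|_{\LL^\infty(0,1)}\le CT$ and $\|r^T\|_{\LL^1(0,1)}\le C$. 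Young's inequality together with Lemma~\ref{lem: sup_norm_Psi} then gives $\|(\Psi^{(T)})'\|_{\LL^\infty(0,1)}\le\|r^T\|_{\LL^\infty(0,1)}+\|T\Psi^{(T)}\|_{\LL^\infty(0,1)}\|r^T\|_{\LL^1(0,1)}\le CT$, and adding $\|\rho'\|_{\LL^\infty(0,1)}$ yields $\|(d^T)'\|_{\LL^\infty(0,1)}\le CT$.

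\textbf{The $\LL^2$ bound.} Here the naive estimate $\|T\Psi^{(T)}\ast r^T\|_{\LL^2(0,1)}\le\|T\Psi^{(T)}\|_{\LL^\infty(0,1)}\|r^T\|_{\LL^1(0,1)}\le CT$ loses a factor $\sqrt T$, so I would argue on the Fourier side, in the spirit of the proof of Proposition~\ref{prop: conv_psi_rho}. The renewal identity above shows $(d^T)'\in\LL^2(\RR_+)$; extending it by zero and using Plancherel,
\begin{equation*}
  \|(d^T)'\|_{\LL^2(0,1)}\le\|(d^T)'\|_{\LL^2(\RR_+)}\le C\,\big\|{-\i z\,\Fcal d^T(z)-d^T(0)}\big\|_{\LL^2(dz)},
\end{equation*}
the Fourier transform of the zero extension of $(d^T)'$ being $-\i z\,\Fcal d^T(z)-d^T(0)$ with $d^T(0)=a_T\phi(0)-1/m$ bounded. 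I split the integral at $|z|=\alpha T$, with $\alpha$ as in Lemma~\ref{lem: maj_eps_3}. On $\{|z|\le\alpha T\}$ one writes $\Fcal d^T=\eps^T-1/T$; Lemma~\ref{lem: maj_eps_3} gives $z^2|\eps^T(z)|^2\le \tfrac{C}{T^2}(1+z^2)$, so $\int_{|z|\le\alpha T}z^2|\eps^T(z)|^2\,dz=O(T)$, and the contributions of $\i z/T$ and of the constant $d^T(0)$ over $\{|z|\le\alpha T\}$ are likewise $O(T)$. On $\{|z|>\alpha T\}$ one inserts $\Fcal\phi(w)=\i w^{-1}\big(\phi(0)+\Fcal\phi'(w)\big)$ and $\Fcal\phi^{(T)}(z)=\tfrac{a_T}{T}\Fcal\phi(z/T)$ into $\Fcal\Psi^{(T)}(z)=\Fcal\phi^{(T)}(z)\big(1-T\Fcal\phi^{(T)}(z)\big)^{-1}$ to obtain the factorisation
\begin{equation*}
  -\i z\,\Fcal\Psi^{(T)}(z)-\Psi^{(T)}(0) \;=\; \frac{a_T\,\Fcal g(z/T)}{1-a_T\,\Fcal\phi(z/T)},\qquad g:=\phi'+a_T\phi(0)\,\phi\in\LL^2(\RR_+),
\end{equation*}
i.e.\ the same source $g$ as in the time domain. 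Since $\phi$ is a probability density, $|\Fcal\phi(w)|<1$ for $w\neq0$ and $\Fcal\phi(w)\to0$, so $\gamma:=\sup_{|w|\ge\alpha}|\Fcal\phi(w)|<1$ and $|1-a_T\Fcal\phi(w)|\ge1-\gamma>0$ for $|w|\ge\alpha$; the change of variables $w=z/T$ then gives $\int_{|z|>\alpha T}\big|\tfrac{a_T\Fcal g(z/T)}{1-a_T\Fcal\phi(z/T)}\big|^2 dz\le\tfrac{T}{(1-\gamma)^2}\int_{\RR}|\Fcal g(w)|^2 dw=O(T)$, while the $\rho'$--part on $\{|z|>\alpha T\}$ equals $\int_{|z|>\alpha T}\tfrac{1/m^2}{1+m^2z^2}dz=O(1/T)$. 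Summing, $\|(d^T)'\|_{\LL^2(\RR_+)}^2=O(T)$, hence $\|(d^T)'\|_{\LL^2(0,1)}\le C\sqrt T$.

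\textbf{Main obstacle.} The genuinely delicate point is the $\LL^2$ bound. On the time side the kernel $T\phi^{(T)}$ has $\LL^1(\RR_+)$--mass $a_T$ close to $1$, so every crude manipulation of $(\Psi^{(T)})'=r^T+T\Psi^{(T)}\ast r^T$ pays a factor $(1-a_T)^{-1}=T$ and only yields $\|(\Psi^{(T)})'\|_{\LL^2(0,1)}=O(T)$; the extra $\sqrt T$ is recovered only by the Fourier factorisation above, where the transform of $(\Psi^{(T)})'$ is seen to run through the \emph{same small source} $g=\phi'+a_T\phi(0)\phi$ and the denominator $1-a_T\Fcal\phi(z/T)$ is bounded below uniformly in $T$ once $|z|\gtrsim T$. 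One must also check that this high-frequency estimate is compatible with the low-frequency bound of Lemma~\ref{lem: maj_eps_3} on the overlap, which it is.
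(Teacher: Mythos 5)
Your proof is correct, and for the $\LL^2$ estimate it takes a genuinely different — and in fact more complete — route than the paper. For the $\LL^\infty$ bound you follow essentially the paper's argument (differentiate the renewal equation $\Psi^{(T)}=\phi^{(T)}+T\,\Psi^{(T)}\ast\phi^{(T)}$ and invoke Lemma \ref{lem: sup_norm_Psi} together with Assumption \ref{assump: phi}); note that you correctly retain the boundary term $T\phi^{(T)}(0)\Psi^{(T)}=Ta_T\phi(0)\Psi^{(T)}$ produced by differentiating the convolution, which the paper's displayed formula for $(\Psi^{(T)})'$ omits — harmless for the conclusion, since that term is also $O(T)$ in $\LL^\infty(0,1)$. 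For the $\LL^2$ bound the paper merely asserts that it is a ``direct consequence'' of the $\LL^\infty$ bound; taken literally this yields only $\|(d^T)'\|_{\LL^2(0,1)}\le\|(d^T)'\|_{\LL^\infty(0,1)}\le CT$, and, as you rightly point out, every crude time-domain estimate of $T(\phi^{(T)})'\ast\Psi^{(T)}$ (or of $Ta_T\phi(0)\Psi^{(T)}$, which is already of order $T$ in $\LL^2(0,1)$) also stalls at $O(T)$. Your Fourier-side argument — writing $\Fcal\bigl[(d^T)'\1_{\RR_+}\bigr](z)=-\i z\,\Fcal d^T(z)-d^T(0)$, controlling $\{|z|\le\alpha T\}$ with Lemma \ref{lem: maj_eps_3}, and on $\{|z|>\alpha T\}$ using the factorisation $-\i z\,\Fcal\Psi^{(T)}(z)-\Psi^{(T)}(0)=a_T\Fcal g(z/T)\bigl(1-a_T\Fcal\phi(z/T)\bigr)^{-1}$ with $g=\phi'+a_T\phi(0)\phi\in\LL^2(\RR_+)$, where $|1-a_T\Fcal\phi(w)|\ge 1-\gamma>0$ for $|w|\ge\alpha$ because $\phi$ is a continuous (non-lattice) probability density and $\Fcal\phi$ vanishes at infinity — correctly captures the cancellation and delivers the stated $O(\sqrt T)$. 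The only steps worth spelling out in a final write-up are the routine integrability checks ($(d^T)'\in\LL^1\cap\LL^2(\RR_+)$ and $d^T(t)\to 0$ at infinity, needed for the integration by parts and for Plancherel), all of which follow from Assumption \ref{assump: phi} and Lemma \ref{lem: Psi_L2}; your version could profitably replace the paper's one-line justification of the $\LL^2$ bound.
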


\begin{proof}
    For any $t\in [0,1]$, we have:
    $$\left(\Psi^{(T),\natural}\right)'(t) = T \left(a^\natural_T\phi'(Tt)+\left(\phi^{(T),\natural}\right)'\ast \Psi^{(T),\natural}(t) \right).$$
    Hence, 
    $$\left\| \left(\Psi^{(T),\natural}\right)'\right\|_{\LL^\infty(0,1)} \leq C T \left( \left\| \phi' \right\|_{\infty} + \left\| \Psi^{(T),\natural}\right\|_{\LL^\infty(0,1)} \left\| \phi' \right\|_1 \right).$$
    By Assumption \ref{assump: phi} and Lemma \ref{lem: sup_norm_Psi}, we get:
    $$\left\| \left(\Psi^{(T),\natural}\right)'\right\|_{\LL^\infty(0,1)} \leq CT.$$
    Therefore, we can conclude that $\left\| \left( d^{T,\natural}\right)'\right\|_{\LL^\infty(0,1)} \leq CT$.\\
    The next result is a direct consequence of this one.
\end{proof}

Moreover, the two next lemmas extend Lemma 4.7 in \cite{jaisson_limit_2015}. Indeed, we upgrade their upper bound by proving that the critical case also verified the property.
\begin{lem}
\label{lem: maj_diff_rho}
    There exists $C>0$ such that for any $(s,t)\in [0,1]^2$ such that $s<t$, 
    $$\int_0^s \left| \rho^\natural(t-u) - \rho^\natural(s-u)\right|^2 du \leq C(t-s),$$
    where $\natural \in \{ -,0,+\}$.
\end{lem}
\begin{proof}
    The proof holds in the study of $\rho^\natural$.
\end{proof}

\begin{lem}
\label{lem: maj_diff_psi}
    There exists $C>0$ such that for any $T>2$ and any $(s,t)\in [0,1]^2$ such that $s<t$, 
    $$\int_0^s \left| \Psi^{(T),\natural}(t-u) - \Psi^{(T),\natural}(s-u)\right|^2 du \leq C(t-s),$$
    where $\natural \in \{ -,0,+\}$.
\end{lem}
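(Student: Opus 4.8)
The plan is to reduce the statement to the corresponding estimate for $\rho$ (Lemma~\ref{lem: maj_diff_rho}) together with quantitative control on the remainder $d^T = \Psi^{(T)} - \rho$. Writing $\Psi^{(T)} = \rho + d^T$ and using $(a+b)^2 \le 2a^2 + 2b^2$, the left-hand side is bounded by $2\int_0^s |\rho(t-u)-\rho(s-u)|^2\,du$ plus $2\int_0^s |d^T(t-u)-d^T(s-u)|^2\,du$; the first integral is $\le C(t-s)$ by Lemma~\ref{lem: maj_diff_rho}, so it suffices to show
$$\int_0^s \left| d^T(t-u) - d^T(s-u)\right|^2 du \le C(t-s).$$
The one non-mechanical point is that the two available estimates on $d^T$ — the derivative bound $\|(d^T)'\|_{\LL^2(0,1)} \le C\sqrt T$ from Lemma~\ref{lem: result d'} and the $\LL^2$-smallness $\|d^T\|_{\LL^2(0,1)} \le C/\sqrt T$ from Proposition~\ref{prop: conv_psi_rho} — each lose a factor $T$ when used in isolation over the full range of $t-s$, so I would combine them by splitting according to whether $t-s \le 1/T$ or $t-s > 1/T$.

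In the regime $t-s \le 1/T$, I would use $d^T(t-u) - d^T(s-u) = \int_{s-u}^{t-u} (d^T)'(v)\,dv$ followed by Cauchy--Schwarz in $v$, giving $|d^T(t-u)-d^T(s-u)|^2 \le (t-s)\int_{s-u}^{t-u} |(d^T)'(v)|^2\,dv$ (note that for $u\in[0,s]$ all the arguments lie in $[0,1]$). Integrating in $u$ over $[0,s]$ and exchanging the order of integration (Tonelli, the integrand being nonnegative), the $u$-section at fixed $v$ has length $\min(s,t-v)-\max(0,s-v) \le t-s$, so the double integral is at most $(t-s)^2 \|(d^T)'\|_{\LL^2(0,1)}^2 \le C T (t-s)^2 \le C(t-s)$, the last inequality using $t-s \le 1/T$ and Lemma~\ref{lem: result d'}.

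In the regime $t-s > 1/T$, I would just estimate crudely: $\int_0^s|d^T(t-u)-d^T(s-u)|^2\,du \le 2\int_0^s |d^T(t-u)|^2\,du + 2\int_0^s |d^T(s-u)|^2\,du \le 4\|d^T\|_{\LL^2(0,1)}^2 \le C/T \le C(t-s)$, where the change of variables $w = t-u$ (resp. $w=s-u$) maps the domains into $[0,1]$, and the last two inequalities use Proposition~\ref{prop: conv_psi_rho} and $t-s > 1/T$. Combining the two regimes yields the bound on the $d^T$-term, and adding the contribution of Lemma~\ref{lem: maj_diff_rho} finishes the proof. I expect the case split at scale $1/T$ to be the only genuinely delicate ingredient; the rest is routine (Tonelli, Cauchy--Schwarz, and changes of variable).
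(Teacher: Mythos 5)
Your proposal is correct. It uses the same decomposition as the paper ($\Psi^{(T)}=\rho+d^T$, with Lemma~\ref{lem: maj_diff_rho} handling the $\rho$ part) and the same two quantitative inputs on $d^T$ (Proposition~\ref{prop: conv_psi_rho} giving $\|d^T\|_{\LL^2(0,1)}^2\le C/T$ and Lemma~\ref{lem: result d'} giving $\|(d^T)'\|_{\LL^2(0,1)}^2\le CT$), but it combines them differently. The paper writes the square as $|d^T(t-u)-d^T(s-u)|\cdot|\int_s^t (d^T)'(r-u)\,dr|$ and applies Cauchy--Schwarz in $u$, so that each $r$-slice contributes the geometric mean $\sqrt{(C/T)\cdot CT}=C$ and integrating over $r\in[s,t]$ yields $C(t-s)$ uniformly in $t-s$ --- an interpolation-type argument with no case distinction. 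You instead split at the scale $t-s=1/T$ and use the derivative bound (giving $T(t-s)^2$) for small gaps and the $\LL^2$-smallness (giving $1/T$) for large gaps; both branches are computed correctly (in particular the Tonelli step bounding the $u$-section by $t-s$, and the changes of variable keeping all arguments in $[0,1]$). The two routes buy exactly the same cancellation of the factor $T$; yours is slightly longer but arguably more transparent about why the two bounds balance, while the paper's avoids the case analysis at the cost of a less obvious Cauchy--Schwarz pairing.
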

\begin{proof}
 For $(s,t)\in [0,1]^2$ such that $s<t$, we have
  \begin{align*}
     \int_0^s \left[ \Psi^{(T),\natural} (t-u) - \Psi^{(T),\natural}(s-u)\right]^2 du \leq& C \int_0^s \left| d^{T,\natural}(t-u)-d^{T,\natural}(s-u) \right|^2 du \\
     &+ C\int_0^s \left|\rho^\natural(t-u) -\rho^\natural(s-u)\right|^2 du.
 \end{align*}
 Since $d^{T,\natural}$ is differentiable, we have
     \begin{align*}
     \int_0^{s} \left|d^{T,\natural}(t-u) -d^{T,\natural}(s-u)\right|^2 du &= \int_0^s \left|d^{T,\natural}(t-u) -d^{T,\natural}(s-u)\right| \left|\int_s^t \left(d^{T,\natural} \right)'(r-u) dr \right| du\\
     &\leq \int_s^t \int_0^s \left|d^{T,\natural}(t-u) -d^{T,\natural}(s-u)\right| \left|\left(d^{T,\natural} \right)'(r-u)\right| du dr.
 \end{align*}
 Then, by Cauchy-Schwarz Inequality and Jensen Inequality, we get:
      \begin{align*}
     \int_0^{s} \left|d^{T,\natural}(t-u) -d^{T,\natural}(s-u)\right|^2 du &\leq \int_s^t \sqrt{\int_0^s \left[d^{T,\natural}(t-u) -d^{T,\natural}(s-u)\right]^2 du \int_0^s \left(d^{T,\natural}\right)'(r-u)^2 du }dr.
 \end{align*}
Since $\left\|d^{T,\natural}\right\|_{L^2(0,1)}^2\leq \frac{C}{T}$ (Propositions \ref{prop: conv_psi_rho_-}, \ref{prop: conv_psi_rho_0} or \ref{prop: conv_psi_rho_+} ) and $\left\|\left(d^{T,\natural}\right)'\right\|_{L^2(0,1)}^2 \leq CT$ (Lemma \ref{lem: result d'}), we obtain:
      \begin{equation}
      \label{eq: maj_diff_d^T}
     \int_0^{s} \left|d^{T,\natural}(t-u) -d^{T,\natural}(s-u)\right|^2 du \leq C \int_s^t \frac{1}{T} T du = C(t-s).
    \end{equation}
 Therefore, combining Lemma \ref{lem: maj_diff_rho} and \eqref{eq: maj_diff_d^T} we finally have:
 \begin{align*}
     \int_0^s \left[ \Psi^{(T),\natural} (t-u) - \Psi^{(T),\natural}(s-u)\right]^2 du \leq C(t-s).
 \end{align*}
\end{proof}

\subsection{Results on \texorpdfstring{$\Lambda^{T,\natural}$}{}}
\label{subs: TL for both thm}

In order to apply the Theorem \ref{thm: maj_sup_int_NTW}, we need to verify if $\Lambda^{T,\natural}$ satisfies the different assumptions for any $\natural \in \{-,0,+ \}$. In particular, we need to bound the second moment of $\Lambda^{T,\natural}$ from above and this uniformly in $T$. We also decide to prove that $\Lambda^{T,\natural}$ satisfies some Holdër condition. These are resume in the next proposition where $\natural \in \{ -,0,+\}$ is fixed.
\begin{prop}
\label{prop: regularite_lambda}
    There exists $C>0$ such that for any $T>2$ and $(t,s)\in [0,1]^2$ such that $s<t$, 
    $$\E{ \left| \Lambda^{T,\natural}_t\right|^2} <C \quad \text{and}\quad \E{\left|\Lambda^{T,\natural}_t - \Lambda^{T,\natural}_s \right|^2} \leq C(t-s).$$
\end{prop}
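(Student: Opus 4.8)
The plan is to run every estimate off the Poisson-imbedding SDE \eqref{def: Lambda^T},
\begin{equation*}
\Lambda^T_t = \frac{\mu}{T} + \mu\int_0^t \Psi^{(T)}(t-s)\,ds + \iint_{(0,t)\times\RR_+}\Psi^{(T)}(t-s)\1_{\theta\leq\Lambda^T_s}\,\Ntil^T(ds,d\theta),
\end{equation*}
together with the isometry of Lemma \ref{lem-outil-tilde-N-T}(i), the uniform bound $\|\Psi^{(T)}\|_{\LL^\infty(0,1)}\leq C$ from Lemma \ref{lem: sup_norm_Psi}, and the modulus-of-continuity estimate of Lemma \ref{lem: maj_diff_psi}. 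First I would dispose of an a priori integrability point: for fixed $T\geq 2$ the subcritical Hawkes process $H^T$ is non-explosive and has moments of every order, so $\int_0^1\Lambda^T_s\,ds = T^{-2}\int_0^T\lambda^T_u\,du$ lies in every $L^p$ and the predictable integrand $(s,\theta)\mapsto\Psi^{(T)}(t-s)\1_{\theta\leq\Lambda^T_s}$ meets the hypotheses of Lemma \ref{lem-outil-tilde-N-T}(i); alternatively one localises along $\tau_n:=\inf\{t:\Lambda^T_t\geq n\}$, runs the whole argument on $[0,t\wedge\tau_n]$ with constants independent of $n$, and lets $n\to\infty$ by monotone convergence/Fatou. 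In either case the stochastic integral in the SDE is a mean-zero martingale in its upper limit, so taking expectations and using $\|\Psi^{(T)}\|_{\LL^1(0,1)}\leq\|\Psi^{(T)}\|_{\LL^\infty(0,1)}\leq C$ yields the closed form $\E{\Lambda^T_t}=\frac{\mu}{T}+\mu\int_0^t\Psi^{(T)}(t-s)\,ds\leq C$, uniformly in $t\in[0,1]$ and $T\geq 2$.

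For the second moment I would square the SDE, apply $(a+b+c)^2\leq 3(a^2+b^2+c^2)$, bound the first two summands by a constant through Lemma \ref{lem: sup_norm_Psi}, and for the stochastic one use the isometry of Lemma \ref{lem-outil-tilde-N-T}(i) and $\int_{\RR_+}\1_{\theta\leq\Lambda^T_s}\,d\theta=\Lambda^T_s$:
\begin{equation*}
\E{\left(\iint_{(0,t)\times\RR_+}\Psi^{(T)}(t-s)\1_{\theta\leq\Lambda^T_s}\,\Ntil^T(ds,d\theta)\right)^{2}} = \int_0^t\Psi^{(T)}(t-s)^2\,\E{\Lambda^T_s}\,ds \leq \|\Psi^{(T)}\|_{\LL^\infty(0,1)}^2\int_0^t\E{\Lambda^T_s}\,ds.
\end{equation*}
By Lemma \ref{lem: sup_norm_Psi} and the mean bound just obtained this is $\leq C$, hence $\sup_{t\in[0,1]}\E{|\Lambda^T_t|^2}\leq C$ uniformly in $T$. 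No Grönwall is needed here because the mean bound is explicit; had we instead bounded $\E{\Lambda^T_s}\leq\sqrt{\E{|\Lambda^T_s|^2}}$, Theorem \ref{thm: dragomir} with $\alpha\equiv 0$ would close the argument equally well.

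For the increment bound, for $0\leq s<t\leq 1$ I would substitute $v=t-u$ resp. $v=s-u$ in the deterministic term and split the Poisson integral over $(0,s]$ and $(s,t]$:
\begin{align*}
\Lambda^T_t-\Lambda^T_s &= \mu\int_s^t\Psi^{(T)}(v)\,dv \\
&\quad + \iint_{(0,t)\times\RR_+}\Big(\big[\Psi^{(T)}(t-u)-\Psi^{(T)}(s-u)\big]\1_{u\leq s}+\Psi^{(T)}(t-u)\1_{u>s}\Big)\1_{\theta\leq\Lambda^T_u}\,\Ntil^T(du,d\theta).
\end{align*}
The deterministic term is $\leq\mu\|\Psi^{(T)}\|_{\LL^\infty(0,1)}(t-s)\leq C(t-s)$, so its square is $\leq C(t-s)^2\leq C(t-s)$; for the stochastic term the isometry of Lemma \ref{lem-outil-tilde-N-T}(i) gives its second moment as
\begin{equation*}
\int_0^s\big[\Psi^{(T)}(t-u)-\Psi^{(T)}(s-u)\big]^2\,\E{\Lambda^T_u}\,du + \int_s^t\Psi^{(T)}(t-u)^2\,\E{\Lambda^T_u}\,du,
\end{equation*}
and bounding $\E{\Lambda^T_u}$ by the constant from the first part of the statement, invoking Lemma \ref{lem: maj_diff_psi} on the first integral and $\|\Psi^{(T)}\|_{\LL^\infty(0,1)}\leq C$ on the second, both terms are $\leq C(t-s)$; summing via $(a+b)^2\leq 2(a^2+b^2)$ gives $\E{|\Lambda^T_t-\Lambda^T_s|^2}\leq C(t-s)$. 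I expect the only genuinely delicate step to be the preliminary zero-mean/isometry justification — establishing that $\int_0^1\Lambda^T_s\,ds$ belongs to $L^1\cap L^2$ for fixed $T$ — which is why the localisation device (equivalently, a citation of the classical moment bounds for subcritical Hawkes processes) is placed at the outset; after that the computation is routine, the Volterra kernel $\Psi^{(T)}$ entering only through its $\LL^\infty(0,1)$ norm and the $\LL^2(0,1)$ increment estimate of Lemma \ref{lem: maj_diff_psi}, so that no fixed-point argument is required.
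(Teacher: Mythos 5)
Your proof is correct and follows the same overall route as the paper's: the same isometry from Lemma \ref{lem-outil-tilde-N-T}(i), the same uniform bound $\|\Psi^{(T)}\|_{\LL^\infty(0,1)}\leq C$ from Lemma \ref{lem: sup_norm_Psi}, and, for the increment, exactly the same three-term decomposition closed by Lemma \ref{lem: maj_diff_psi}. The one genuine (if minor) difference is in how the uniform bound on $\E{|\Lambda^T_t|^2}$ is closed: the paper stops at $\E{|\Lambda^T_t|^2}\leq C + C\int_0^t\E{\Lambda^T_s}\,ds$ and invokes the Gronwall-type inequality of Theorem \ref{thm: dragomir} (via $\E{\Lambda^T_s}\leq\sqrt{\E{|\Lambda^T_s|^2}}$), whereas you first take expectations in the SDE to get the explicit formula $\E{\Lambda^T_t}=\frac{\mu}{T}+\mu\int_0^t\Psi^{(T)}(u)\,du\leq\frac{\mu}{T}+\mu\|\Psi^{(T)}\|_1\leq C$, after which the second-moment bound is immediate with no fixed-point argument. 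Your version is slightly more elementary and makes the constant explicit; the paper's version is more robust (it would survive if the mean were not computable in closed form). You are also more careful than the paper about the a priori integrability/localisation needed to treat the stochastic integral as a mean-zero square-integrable martingale — a point the paper passes over silently — and your handling of it (moments of the subcritical Hawkes process for fixed $T$, or stopping at $\tau_n$ and passing to the limit) is sound.
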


\begin{proof}
    For $t\in [0,1]$, we have:
    $$\left| \Lambda^{T,\natural}_t\right|^2 \leq C\frac{\mu}{T^2} + C \mu^2 \left( \int_0^t \Psi^{(T),\natural}(u) du \right)^2 + C\left( \iint_{(0,t)\times \RR_+} \Psi^{(T),\natural}(t-s) \1_{\theta \leq \Lambda^{T,\natural}_s} \Ntil^T(ds,d\theta)\right)^2 .$$
    Besides, Lemma \ref{lem: sup_norm_Psi} allows us to take the expectation and yields:
\begin{align*}
    \hspace*{-0,5cm}\E{\left| \Lambda^{T,\natural}_t\right|^2} &\leq C\frac{\mu^2}{T^2} + C\left\| \Psi^{(T),\natural}\right\|_{\LL^\infty(0,1)}^2 \mu^2 + C\E{\left( \iint_{(0,t)\times \RR_+} \Psi^{(T),\natural}(t-s) \1_{\theta \leq \Lambda^{T,\natural}_s} \Ntil^T(ds,d\theta)\right)^2}\\
    &\leq C\frac{\mu^2}{T^2} + C\left\| \Psi^{(T),\natural}\right\|_{\LL^\infty(0,1)}^2 \mu^2 + C\left\| \Psi^{(T),\natural}\right\|_{\LL^\infty(0,1)}^2  \int_0^t \E{ \Lambda^{T,\natural}_s} ds .
\end{align*}
Note that since $\phi^{T,\natural}$ is locally square integrable, Theorem 8 of \cite{delattre_hawkes_2016} ensures the existence of $\E{\Lambda^{T,\natural}_{\cdot}}$. Moreover, for any $t\in [0,1]$, 
$$\E{\Lambda_t^{T,\natural}} = \frac{\mu}{T} + \int_0^t \Psi^{(T),\natural}(t) dt \leq \mu + \left\|\Psi^{(T),\natural} \right\|_{\LL^\infty(0,1)} \leq C.$$
Thus, we obtain that
$$\sup_{t\in [0,1]}\E{\left| \Lambda^{T,\natural}_t\right|^2} \leq C. $$

On the other hand, for $0\leq s<t\leq 1$, we have:
\begin{align*}
    \Lambda^{T,\natural}_t - \Lambda^{T,\natural}_s =& \mu \int_s^t \Psi^{(T),\natural}(u) du + \iint_{[s,t)\times \RR_+} \Psi^{(T),\natural}(t-u) \1_{\theta \leq \Lambda^{T,\natural}_u} \Ntil^T(du,d\theta) \\
    &+ \iint_{(0,s)\times \RR_+} \left( \Psi^{(T),\natural}(t-u)-\Psi^{(T),\natural}(s-u)\right) \1_{\theta \leq \Lambda^{T,\natural}_u} \Ntil^T(du,d\theta).
\end{align*}
This yields
\begin{align*}
    \E{\left|\Lambda^{T,\natural}_t - \Lambda^{T,\natural}_s\right|^2} \leq& \mu^2 C \left| \int_s^t\Psi^{(T),\natural}(u) du \right|^2+ C\E{\left|\iint_{[s,t)\times \RR_+} \Psi^{(T),\natural}(t-u) \1_{\theta \leq \Lambda^{T,\natural}_u} \Ntil^T(du,d\theta)\right|^2} \\
    &+ C \E{ \left|\iint_{(0,s)\times \RR_+} \left( \Psi^{(T),\natural}(t-u)-\Psi^{(T)}(s-u)\right) \1_{\theta \leq \Lambda^{T,\natural}_u} \Ntil^T(du,d\theta) \right|^2}\\
    \leq& \mu^2 C \left|\int_s^t \Psi^{(T),\natural}(u) du \right|^2+ C \int_s^t \left|\Psi^{(T),\natural}(t-u)\right|^2 \E{\Lambda^{T,\natural}_u }du \\
    &+ C \int_0^s \left|\Psi^{(T),\natural}(t-u)-\Psi^{(T),\natural}(s-u)\right|^2 \E{\Lambda^{T,\natural}_u }du.
\end{align*}
Since $\left\| \Psi^{(T),\natural}\right\|_{\LL^\infty(0,1)} \leq C$ and $\sup_{u\in [0,1]}\E{\Lambda^{T,\natural}_u }\leq C$, we have
\begin{align*}
    \E{\left|\Lambda^{T,\natural}_t - \Lambda^{T,\natural}_s\right|^2} \leq& C(t-s) + C \int_0^s \left|\Psi^{(T),\natural}(t-u)-\Psi^{(T),\natural}(s-u)\right|^2 du \leq C(t-s).
\end{align*}
Here we concluded with Lemma \ref{lem: maj_diff_psi}.
\end{proof}

\paragraph{Discretized processes}~\\
For $t\in [0,1]$, we recall the discretized version of our process $\Lambda^T$:
\begin{equation*}
    \bar{\Lambda}^{T,\natural}_t := \sum_{i=0}^{k-1} \1_{I_{i,k}}(t) \Lambda^{T,\natural}_{t_i^k}
\end{equation*}
Then, we have the following proposition:
\begin{prop}
\label{prop: discret_lambda_rho}
    There exists $C>0$ such that for any $T>2$, 
    $$\sup_{t\in[0,1]}\E{\left|\Lambda^{T,\natural}_t - \bar{\Lambda}^{T,\natural}_t \right|^2} \leq \frac{C}{k}.$$
\end{prop}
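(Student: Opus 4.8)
The plan is to deduce the estimate directly from the modulus-of-continuity bound $\E{|\Lambda^T_t - \Lambda^T_s|^2}\le C|t-s|$ proved in Proposition \ref{prop: regularite_lambda}, since $\bar{\Lambda}^T$ is nothing but $\Lambda^T$ sampled on the grid $(t_i^k)_i$ and frozen on each interval $I_{i,k}$, so the increment $\Lambda^T_t-\bar{\Lambda}^T_t$ is just an increment of $\Lambda^T$ over a mesh interval.

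First I would fix $t\in(0,1]$ and let $i\in\{0,\dots,k-1\}$ be the unique index with $t\in I_{i,k}=(t_i^k,t_{i+1}^k]$. By the definition of the discretized process in Notation \ref{not: discretized} we have $\bar{\Lambda}^T_t=\Lambda^T_{t_i^k}$, and $0<t-t_i^k\le t_{i+1}^k-t_i^k=1/k$. Applying Proposition \ref{prop: regularite_lambda} with $s=t_i^k<t$ then gives
$$\E{\left|\Lambda^T_t-\bar{\Lambda}^T_t\right|^2}=\E{\left|\Lambda^T_t-\Lambda^T_{t_i^k}\right|^2}\le C\,(t-t_i^k)\le\frac{C}{k},$$
with $C$ independent of $t$ and $T$. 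It remains to handle the endpoint $t=0$: there every indicator $\1_{I_{i,k}}(0)$ vanishes, so $\bar{\Lambda}^T_0=0$, while $\Lambda^T_0=\mu/T$ by \eqref{def: Lambda^T}; hence $\E{|\Lambda^T_0-\bar{\Lambda}^T_0|^2}=\mu^2/T^2$, which is $\le C/k$ because the value $k=\lfloor T^{4/5}\rfloor+1$ fixed in Section \ref{sec: main upperbound} satisfies $k\le CT^2$ for $T\ge2$. Taking the supremum over $t\in[0,1]$ then yields the stated bound.

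I do not expect any genuine obstacle here: all the analytic content is carried by Proposition \ref{prop: regularite_lambda}, whose proof in turn rests on the uniform bound $\|\Psi^{(T)}\|_{\LL^\infty(0,1)}\le C$ (Lemma \ref{lem: sup_norm_Psi}), the continuity estimate for $\Psi^{(T)}$ (Lemma \ref{lem: maj_diff_psi}) and a Gronwall-type argument (Theorem \ref{thm: dragomir}). Once that is available, the present statement is an immediate corollary, the only point needing a word of care being the behaviour at $t=0$.
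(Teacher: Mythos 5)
Your proof is correct and follows essentially the same route as the paper: reduce $\E{|\Lambda^T_t-\bar{\Lambda}^T_t|^2}$ to an increment of $\Lambda^T$ over a mesh interval of length $1/k$ and invoke the modulus-of-continuity bound of Proposition \ref{prop: regularite_lambda}. Your extra remark about $t=0$ (where all indicators vanish so $\bar{\Lambda}^T_0=0$) is a small point the paper's proof silently skips, and your way of closing it is fine.
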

\begin{proof}
    Let $t\in [0,1]$. Then, we have
    $$\E{\left|\Lambda^{T,\natural}_t - \bar{\Lambda}^{T,\natural}_t \right|^2} = \E{\sum_{i=0}^{k-1} \1_{I_{i,k}}(t)\left|\Lambda^{T,\natural}_t - \Lambda^{T,\natural}_{t_i^k} \right|^2 }=\sum_{i=0}^{k-1} \1_{I_{i,k}}(t)\E{\left|\Lambda^{T,\natural}_t - \Lambda^{T,\natural}_{t_i^k} \right|^2 }.$$
    Thus, by Proposition \ref{prop: regularite_lambda}, we get
    $$\E{\left|\Lambda^{T,\natural}_t - \bar{\Lambda}^{T,\natural}_t \right|^2} \leq C \sum_{i=0}^{k-1} \1_{I_{i,k}}(t)\left|t - t_i^k \right| \leq \frac{C}{k}.$$
\end{proof}

\subsection{Tools for the Proof of Theorem \ref{th: NUHP to X}}
\label{subs: tools for theorem NUHP to X}
In this section, we want to control $\E{\sup_{t\in [0,1]}\left|R^T_t\right|^2}$. To do so, let us recall some notation, previously introduced in \eqref{eq: Delta^T avec reste}:
\begin{align*}
    Y^T_t &= \iint_{(0,t)\times \RR_+} d^T(t-s) \1_{\theta \leq \Lambda^T_s} \Ntil^T(ds,d\theta), \\
    M^{\Lambda^T, \Ntil^T}_t &= \iint_{(0,t]\times \RR_+} \rho(-s) \left[\1_{\theta \leq \Lambda^T_s} - \1_{\theta \leq \bar{\Lambda}^T_s} \right] \Ntil^T(ds,d\theta),\\
    M^{\Ntil^T-W}_t &= \iint_{(0,t]\times \RR_+} \rho(-s)\1_{\theta \leq \bar{\Lambda}^T_s} (\Ntil^T-W)(ds,d\theta).
\end{align*}
Also, note that since this section concern the Proof of Theorem \ref{th: NUHP to X}, we do not make use of the index $\natural$ as indicated in the begin of the proof.

\begin{prop}
\label{prop: maj_YT}
    There exists $C>0$ such that for any $T>2$,
    $$\E{\sup_{t\in [0,1]} \left| Y^T_{t} \right|^4} \leq \frac{C}{T}$$
\end{prop}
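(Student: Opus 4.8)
The plan is to reduce the estimate of $\E{\sup_{t\in[0,1]}|Y^T_t|^4}$ to two ingredients: (a) a pointwise bound $\E{|Y^T_t|^4}\le C/T^2$ valid for every fixed $t\in[0,1]$, evaluated along a grid of mesh $\sim 1/T$; and (b) a control of the oscillation of $Y^T$ inside each mesh cell, obtained from a semimartingale decomposition of $Y^T$. For (a), fix $t$ and observe that $v\mapsto\iint_{(0,v)\times\RR_+}d^T(t-s)\1_{\theta\le\Lambda^T_s}\Ntil^T(ds,d\theta)$ is on $[0,t]$ a square integrable martingale whose value at $v=t$ is $Y^T_t$; applying Lemma~\ref{lem-outil-tilde-N-T}(ii) to the integrand $d^T(t-s)\1_{\theta\le\Lambda^T_s}$ restricted to $(0,t)$, and using $\int_{\RR_+}\1_{\theta\le\Lambda^T_s}\,d\theta=\Lambda^T_s$, I get
\[
\E{|Y^T_t|^4}\le C\,\E{\Big(\int_0^t|d^T(t-s)|^2\Lambda^T_s\,ds\Big)^2}+\frac{C}{T^2}\,\E{\int_0^t|d^T(t-s)|^4\Lambda^T_s\,ds}.
\]
Cauchy--Schwarz in the first term, together with $\|d^T\|_{\LL^2(0,1)}^2\le C/T$ (Proposition~\ref{prop: conv_psi_rho}), $\|d^T\|_{\LL^\infty(0,1)}\le\|\Psi^{(T)}\|_{\LL^\infty(0,1)}+\|\rho\|_{\LL^\infty(0,1)}\le C$ (Lemma~\ref{lem: sup_norm_Psi}) and $\E{|\Lambda^T_s|^2}\le C$ (Proposition~\ref{prop: regularite_lambda}), then yields $\E{|Y^T_t|^4}\le C/T^2$ uniformly in $t$.

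\textbf{Semimartingale decomposition.} Since $d^T$ is differentiable (Assumption~\ref{assump: phi}), writing $d^T(t-s)=d^T(0)+\int_0^{t-s}(d^T)'(v)\,dv$ and applying the stochastic Fubini theorem (exactly as in the argument showing $\Lambda^T$ is a semimartingale) gives
\[
Y^T_t=d^T(0)\,Z_t+\int_0^t G_u\,du,\qquad Z_t:=\iint_{(0,t)\times\RR_+}\1_{\theta\le\Lambda^T_s}\Ntil^T(ds,d\theta),\quad G_u:=\iint_{(0,u)\times\RR_+}(d^T)'(u-s)\1_{\theta\le\Lambda^T_s}\Ntil^T(ds,d\theta).
\]
Running the same martingale estimate as in step (a) but with $(d^T)'$ in place of $d^T$ and using $\|(d^T)'\|_{\LL^2(0,1)}^2\le CT$ and $\|(d^T)'\|_{\LL^\infty(0,1)}\le CT$ (Lemma~\ref{lem: result d'}), I expect $\sup_{u\in[0,1]}\E{|G_u|^4}\le CT^2$. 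Next, applying Lemma~\ref{lem-outil-tilde-N-T}(ii) to the integrand $\1_{\{s<v\le t\}}\1_{\theta\le\Lambda^T_v}$ and then Jensen's inequality gives, for $0\le s<t\le1$,
\[
\E{\sup_{r\in[s,t]}|Z_r-Z_s|^4}\le C\,\E{\Big(\int_s^t\Lambda^T_v\,dv\Big)^2}+\frac{C}{T^2}\,\E{\int_s^t\Lambda^T_v\,dv}\le C(t-s)^2+\frac{C}{T^2}(t-s),
\]
again by Proposition~\ref{prop: regularite_lambda}; also $|d^T(0)|=|\Psi^{(T)}(0)-1/m|\le C$ by Lemma~\ref{lem: sup_norm_Psi}.

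\textbf{Discretization and conclusion.} Fix $M:=\lfloor T\rfloor$ and $s_i:=i/M$, $i=0,\dots,M$. From $\sup_{t}|Y^T_t|\le\max_{i}|Y^T_{s_i}|+\max_{i}\sup_{t\in[s_i,s_{i+1}]}|Y^T_t-Y^T_{s_i}|$ I get
\[
\E{\sup_{t\in[0,1]}|Y^T_t|^4}\le C(M+1)\sup_{t\in[0,1]}\E{|Y^T_t|^4}+C\sum_{i=0}^{M-1}\E{\sup_{t\in[s_i,s_{i+1}]}|Y^T_t-Y^T_{s_i}|^4}.
\]
The first term is $\le C(M+1)/T^2\le C/T$ by step (a). For the second, the decomposition gives $Y^T_t-Y^T_{s_i}=d^T(0)(Z_t-Z_{s_i})+\int_{s_i}^t G_u\,du$, so each summand is $\le C|d^T(0)|^4\E{\sup_{[s_i,s_{i+1}]}|Z_\cdot-Z_{s_i}|^4}+C\,\E{(\int_{s_i}^{s_{i+1}}|G_u|\,du)^4}$; summing the first piece over $i$ and using the $Z$-oscillation bound gives $\le CM\cdot M^{-2}\le C/T$, while Jensen gives $\E{(\int_{s_i}^{s_{i+1}}|G_u|\,du)^4}\le M^{-3}\int_{s_i}^{s_{i+1}}\E{|G_u|^4}\,du\le CT^2M^{-4}$, so summing over $i$ yields $\le CT^2M^{-3}\le C/T$. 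Altogether $\E{\sup_{t\in[0,1]}|Y^T_t|^4}\le C/T$.

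\textbf{Main obstacle.} The crux is that $Y^T$ is not a martingale in $t$ (the kernel $d^T(t-s)$ depends on the running upper limit), so the supremum cannot be dominated by a single maximal inequality. The semimartingale decomposition is lossy pointwise — it produces the term $d^T(0)Z_t$, which is only of order one in $\LL^4$ because $Z$ is an order-one martingale-measure integral and $d^T(0)$ does not vanish, together with the term $G$ whose fourth moment is merely $O(T^2)$ — and both become negligible only once restricted to cells of length $\sim1/T$. The delicate point is thus the bookkeeping that balances the different $T$-scalings ($\|d^T\|_{\LL^2(0,1)}$ small like $T^{-1/2}$, $d^T(0)$ of order one, $\|(d^T)'\|_{\LL^2(0,1)}$ and $\|(d^T)'\|_{\LL^\infty(0,1)}$ large) against the number $\sim T$ of cells, which is exactly what forces the choice $M=\lfloor T\rfloor$.
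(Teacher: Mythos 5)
Your proposal is correct and follows essentially the same route as the paper: Taylor-expand $d^T(t-s)$ around $s=t$, use stochastic Fubini to obtain the decomposition $Y^T_t=d^T(0)Z_t+\int_0^t G_u\,du$, discretize $[0,1]$ on a grid of mesh $\sim 1/T$, bound the grid values via Lemma~\ref{lem-outil-tilde-N-T}(ii) together with $\|d^T\|_{\LL^2(0,1)}^2\le C/T$, and bound the in-cell oscillation by separately treating the $Z$-increment (Lemma~\ref{lem-outil-tilde-N-T} again) and the drift term (Jensen plus $\|(d^T)'\|_{\LL^2(0,1)}^2\le CT$, $\|(d^T)'\|_{\LL^\infty(0,1)}\le CT$). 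The only cosmetic deviation from the paper is that you do not introduce the càdlàg modification $\Ycal^T$ with $Y^T_t=\Ycal^T_{t^-}$, which the paper uses to make the maximal inequalities cleanly applicable; your final bookkeeping and the choice of grid size are identical.
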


\begin{proof}
    Since $d^T$ is differentiable, we can write for any $(s,t) \in [0,1]^2$ such that $s<t$, 
    $$d^T(t-s) = d^T(0) + \int_s^t \left( d^T\right)'(u-s) du.$$
    Hence, combining this with Fubini's Theorem, we get
    \begin{equation} \label{eq: reecriture YT}
      Y^T_t = d^T(0) \int_{(0,t)\times \RR_+} \1_{\theta \leq \Lambda^T_s} \Ntil^T(ds,d\theta) + \int_0^t \left( \int_{(0,u)\times \RR_+} \left( d^T\right)'(u-s) \1_{\theta \leq \Lambda^T_s} \Ntil^T(ds,d\theta) \right)du .
\end{equation}
We now define the càd-làg semimartingale $\Ycal^T$ by
    \begin{equation*}
        \Ycal^{T}_t = d^T(0) \int_{(0,t]\times \RR_+} \1_{\theta \leq \Lambda^T_s} \Ntil^T(ds,d\theta) + \int_0^t \left( \int_{(0,u)\times \RR_+} \left( d^T\right)'(u-s) \1_{\theta \leq \Lambda^T_s} \Ntil^T(ds,d\theta) \right)du .
    \end{equation*}
We thereupon deduce that $Y^T_{t} = \Ycal^{T}_{t^-}$ and we get that 
    $$\sup_{t \in [0,1]}\left|Y^T_t\right| \leq  \sup_{t \in [0,1]}\left|\Ycal^{T}_t \right|. $$

We now work on $\Ycal^T$ and we denote by $(a_i)_{i=0,\dots, N}$ a regular subdivision of $[0,1]$. Then, we have:
    \begin{align*}
        \E{\sup_{t\in [0,1]} \left| \Ycal^T_t \right|^4} &\leq \E{\max_{i=0, \dots, N-1} \sup_{t\in[ a_i , a_{i+1})} \left| \Ycal^T_t - \Ycal^T_{a_i}\right|^4 } + \E{\max_{i=1, \dots, N} \left| \Ycal^T_{a_i}\right|^4} \\
        &\leq \sum_{i=0}^{N-1} \E{\sup_{t\in [ a_i , a_{i+1})} \left| \Ycal^T_t - \Ycal^T_{a_i}\right|^4 } + \sum_{i=1}^{N}\E{\left| \Ycal^T_{a_i}\right|^4}.
    \end{align*}
    We split the proof in two parts:
    \paragraph{Control of \texorpdfstring{$\sum_{i=0}^{N-1}\E{\sup_{t\in [a_i , a_{i+1}) } \left| \Ycal^T_t - \Ycal^T_{a_i}\right|^4 }$}{}}.~\\
    
    We first make use of \eqref{eq: reecriture YT} and the fact that for $T$ large enough $\left\|d^T\right\|_{\LL^\infty(0,1)} \leq C$ to obtain
    \begin{align*}
        \hspace*{-1cm}\sum_{i=0}^{N-1}\E{\sup_{t\in [a_i , a_{i+1}) }
        \left| \Ycal^T_t - \Ycal^T_{a_i}\right|^4 } \leq& C \sum_{i=0}^{N-1} \E{\sup_{t\in [a_i, a_{i+1})} \left| \iint_{[a_i,t] \times \RR_+} \1_{\theta \leq \Lambda^T_s} \Ntil^T(ds,d\theta) \right|^4}\\
        & + \sum_{i=0}^{N-1} \E{\sup_{t\in [a_i, a_{i+1})} \left| \int_{a_i}^{t} \left( \iint_{(0,u)\times \RR_+} \left( d^T\right)'(u-s)\1_{\theta \leq \Lambda^T_s} \Ntil^T(ds,d\theta) \right) du \right|^4} .
    \end{align*}
    Lemma \ref{lem-outil-tilde-N-T}, applied with $u_{(s,\theta)} = \1_{(a_i,a_{i+1}]}(s) \1_{\theta \leq \Lambda^T_s}$, yields
        \begin{align*}
        \hspace*{-1cm}\sum_{i=0}^{N-1}\E{\sup_{t\in[ a_i , a_{i+1})} \left| \Ycal^T_t - \Ycal^T_{a_i}\right|^4 } \leq& C \sum_{i=0}^{N-1} \E{\left(\int_{a_i}^{a_{i+1}} \Lambda^T_s ds \right)^2 + \frac{1}{T^2}\int_{a_i}^{a_{i+1}} \Lambda^T_s ds }\\
        & + \sum_{i=0}^{N-1} \E{\left| \int_{a_i}^{a_{i+1}} \left| \iint_{(0,u)\times \RR_+}\left( d^T\right)'(u-s)\1_{\theta \leq \Lambda^T_s} \Ntil^T(ds,d\theta) \right| du \right|^4}.
    \end{align*}
    Thanks to Jensen's inequality, we compute 
        \begin{align*}
        \hspace*{-1cm}\sum_{i=0}^{N-1}\E{\sup_{t\in[ a_i , a_{i+1})} \left| \Ycal^T_t - \Ycal^T_{a_i}\right|^4 } \leq& C \sum_{i=0}^{N-1} \frac{1}{N} \int_{a_i}^{a_{i+1}} \E{\left|\Lambda^T_s\right|^2} ds  + \frac{1}{T^2}\int_{a_i}^{a_{i+1}} \E{\Lambda^T_s} ds \\
        & + \sum_{i=0}^{N-1} \frac{1}{N^3}\E{\int_{a_i}^{a_{i+1}} \left| \iint_{(0,u)\times \RR_+} \left( d^T\right)'(u-s)\1_{\theta \leq \Lambda^T_s} \Ntil^T(ds,d\theta) \right|^4 du }.
    \end{align*}
    Thus,
    \begin{align*}
        \hspace*{-1cm}\sum_{i=0}^{N-1}\E{\sup_{t\in[ a_i , a_{i+1})} \left| \Ycal^T_t - \Ycal^T_{a_i}\right|^4 } \leq& \frac{C}{N}  + \frac{C}{T^2} + \frac{1}{N^3}\int_0^1 \E{\left| \iint_{(0,u)\times \RR_+} \left( d^T\right)'(u-s)\1_{\theta \leq \Lambda^T_s} \Ntil^T(ds,d\theta) \right|^4 }du.
    \end{align*}
    Moreover, by Lemma \ref{lem-outil-tilde-N-T}, we have for any $u\in [0,1]$:
    \begin{align*}
        \E{\left| \int_0^u \int_{\RR_+} \left( d^T\right)'(u-s)\1_{\theta \leq \Lambda^T_s} \Ntil^T(ds,d\theta) \right|^4 } \leq& C\E{ \left(\int_0^u \left| \left( d^T\right)'(u-s)\right|^2 \Lambda^T_s ds\right)^2} \\
        &+ \frac{C}{T^2}\int_0^u \left| \left( d^T\right)'(u-s)\right|^4 \E{\Lambda^T_s}  ds.
    \end{align*}
    However, by the Cauchy-Schwarz inequality, we deduce that for any $u\in [0,1]$,
    \begin{align*}
        \E{\left(\int_0^u \left| \left( d^T\right)'(u-s)\right|^2 \Lambda^T_s ds\right)^2} &\leq \left\| \left( d^T\right)'\right\|_{\LL^2(0,1)}^2 \int_0^u \left| \left( d^T\right)'(u-s)\right|^2 \E{\left|\Lambda^T_s\right|^2} ds \\
        &\leq \sup_{s\in [0,1]} \E{\left|\Lambda^T_s\right|^2} \left\| \left( d^T\right)'\right\|_{\LL^2(0,1)}^4.
    \end{align*}
    Since $\sup_{s\in [0,1]} \E{\left|\Lambda^T_s\right|^2}\leq C$, $\left\|\left(d^T\right)'\right\|_{\LL^\infty (0,1)}\leq CT$ and $\left\|\left(d^T\right)'\right\|_{\LL^2(0,1)} ^2\leq CT$, we get
        \begin{align*}
        \E{\left| \iint_{(0,u)\times \RR_+} \left( d^T\right)'(u-s)\1_{\theta \leq \Lambda^T_s} \Ntil^T(ds,d\theta) \right|^4 } \leq& CT^{2}.
    \end{align*}
    Therefore, we can conclude that
    \begin{align*}
        \sum_{i=0}^{N-1}\E{\sup_{t\in( a_i , a_{i+1}]} \left| \Ycal^T_t - \Ycal^T_{a_i}\right|^4 } \leq& C\left( \frac{1}{N} + \frac{1}{T^2} + \frac{T^2}{N^3}\right).
    \end{align*}
    \paragraph{Control of \texorpdfstring{$\sum_{i=1}^{N}\E{\left| \Ycal^T_{a_i}\right|^4 }$}{}.}~\\
    In this paragraph, we will employ the first writing of $\Ycal^T$, that is
    $$\Ycal^T_t = \iint_{(0,t)\times\RR_+}d^T(t-s)\1_{\theta \leq \Lambda_s^T}\Ntil^T(ds,d\theta), \quad t\in [0,1].$$
    Hence, by Lemma \ref{lem-outil-tilde-N-T}, we obtain
    \begin{align*}
        \E{\left| \Ycal^T_{a_i}\right|^4 } &\leq C\E{\left( \int_0^{a_i} \left|d^T(a_i-s)\right|^2 \Lambda^T_s  ds \right)^2} + \frac{C}{T^2}\int_0^{a_i} \left|d^T(a_i-s)\right|^2 \E{\Lambda^T_s} ds
    \end{align*}
    and so, by the same computations as before, we get:
    $$\E{\left( \int_0^{a_i} \left|d^T(a_i-s)\right|^2 \Lambda^T_s  ds \right)^2} \leq \sup_{s\in [0,1]} \E{\left|\Lambda^T_s\right|^2} \left\| d^T\right\|_{\LL^2(0,1)}^4 \leq \frac{C}{T^2}.$$
    Thus we obtain that 
    $$ \sum_{i=1}^{N} \E{\left| \Ycal^T_{a_i}\right|^4 } \leq \sum_{i=1}^{N}\frac{C}{T^2} \leq C\frac{N}{T^2}.$$
    Finally, we take $N=T$ and we conclude that 
    $$\E{\sup_{t\in[0,1]} \left|Y^T_t \right|^4}\leq \E{\sup_{t\in[0,1]} \left|\Ycal^T_t \right|^4} \leq \frac{C}{T}.$$
\end{proof}

\begin{prop}
\label{prop: maj_RT}
    There exists $C>0$ such that for any $T>2$, 
    $$\E{\sup_{t\in [0,1]} \left|R^T_t \right|^2} \leq C\left( \frac{1}{\sqrt{k}} + \frac{k^2}{T^2} + \frac{1}{T} \right).$$
\end{prop}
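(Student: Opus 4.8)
The plan is to unpack the remainder $R^T$ implicitly defined by \eqref{eq: Delta_final}: comparing it with \eqref{eq: Delta^T avec reste} one reads off
$$R^T_t=\mu\int_0^t d^T(u)\,du+Y^T_t+\frac{e^{-t/m}}{m}M^{\Lambda^T,\Ntil^T}_{t^-}+\frac{e^{-t/m}}{m}M^{\Ntil^T-W}_{t^-},$$
and then to estimate $\E{\sup_{t\in[0,1]}|R^T_t|^2}$ term by term, using $(a+b+c+d)^2\le 4(a^2+b^2+c^2+d^2)$ and $\sup_{t\in[0,1]}|M_{t^-}|\le\sup_{t\in[0,1]}|M_t|$ for the right-continuous parts. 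The drift term is deterministic: $\sup_{t\in[0,1]}|\mu\int_0^t d^T(u)\,du|\le\mu\|d^T\|_{\LL^1(0,1)}\le\mu\|d^T\|_{\LL^2(0,1)}$, which is $O(T^{-1/2})$ by Proposition \ref{prop: conv_psi_rho}, so its square is $\le C/T$. For $Y^T$ I would invoke Proposition \ref{prop: maj_YT} and Jensen's inequality to get $\E{\sup_{t\in[0,1]}|Y^T_t|^2}\le(\E{\sup_{t\in[0,1]}|Y^T_t|^4})^{1/2}\le C/\sqrt{T}$.

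For the martingale $M^{\Lambda^T,\Ntil^T}$ I would first observe that its integrand $e^{s/m}(\1_{\theta\le\Lambda^T_s}-\1_{\theta\le\bar{\Lambda}^T_s})$ is $\FF^T$-predictable with the integrability required in Lemma \ref{lem-outil-tilde-N-T}(i) (by Proposition \ref{prop: regularite_lambda}), hence $M^{\Lambda^T,\Ntil^T}$ is a right-continuous square-integrable martingale and Doob's inequality bounds $\E{\sup_{t\in[0,1]}|M^{\Lambda^T,\Ntil^T}_t|^2}$ by $4\,\E{[M^{\Lambda^T,\Ntil^T}]_1}$. Using the explicit bracket from Lemma \ref{lem-outil-tilde-N-T}(i), the fact that $TN^T$ is a Poisson measure with intensity $T^2\,ds\,d\theta$, and the identity $\int_{\RR_+}(\1_{\theta\le a}-\1_{\theta\le b})^2\,d\theta=|a-b|$, this expectation equals $\E{\int_0^1 e^{2s/m}|\Lambda^T_s-\bar{\Lambda}^T_s|\,ds}\le C\int_0^1(\E{|\Lambda^T_s-\bar{\Lambda}^T_s|^2})^{1/2}\,ds\le C/\sqrt{k}$, the last inequality being Proposition \ref{prop: discret_lambda_rho}. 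For the coupling term $M^{\Ntil^T-W}$ I would simply apply Theorem \ref{thm: maj_sup_int_NTW} with $f(s)=e^{s/m}$ (differentiable with continuous derivative on $[0,1]$) and the positive $\FF^T$-predictable process $u^T=\Lambda^T$, which satisfies $\sup_{s\in[0,1]}\E{|\Lambda^T_s|^2}<C$ by Proposition \ref{prop: regularite_lambda}; since $\bar{u}^T=\bar{\Lambda}^T$ this gives $\E{\sup_{t\in[0,1]}|M^{\Ntil^T-W}_t|^2}\le C(1/k+k^2/T^2)$.

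Collecting the four bounds yields $\E{\sup_{t\in[0,1]}|R^T_t|^2}\le C(1/T+1/\sqrt{T}+1/\sqrt{k}+1/k+k^2/T^2)$, and it remains to absorb the spurious terms: $1/k\le1/\sqrt{k}$ always, and $1/\sqrt{T}\le1/\sqrt{k}+k^2/T^2$ (if $k\le T$ then $1/\sqrt{T}\le1/\sqrt{k}$, while if $k>T$ then $k^2/T^2>1\ge1/\sqrt{T}$ since $T\ge2$), which leaves precisely the asserted bound. The only genuine difficulty here is the $Y^T$ term: since $d^T$ converges to $0$ only in $\LL^2(0,1)$ and not uniformly — its value at the origin remains of order one — a direct $L^2$-of-supremum estimate on $Y^T$ would produce only a constant, so one is forced to pass through the fourth-moment control of Proposition \ref{prop: maj_YT} (which is exactly why that proposition is formulated with a fourth moment) and lose a square root. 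Everything else is routine bookkeeping, built on Theorem \ref{thm: maj_sup_int_NTW}, which already packages the quantitative coupling analysis, and on the bracket computations via Lemma \ref{lem-outil-tilde-N-T}, taking the usual care that the discretized indicators $\1_{\theta\le\bar{\Lambda}^T_s}$ are $\FF$-predictable so that the integrals against the Brownian sheet $W$ are honest martingales (cf. Notation \ref{not: discretized}).
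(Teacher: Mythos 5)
Your proof is correct and follows essentially the same route as the paper: termwise decomposition of $R^T$, Proposition \ref{prop: maj_YT} plus Jensen for $Y^T$, Doob's inequality plus the bracket and Proposition \ref{prop: discret_lambda_rho} for $M^{\Lambda^T,\Ntil^T}$, and the coupling estimate for $M^{\Ntil^T-W}$. If anything you are more careful than the paper, which cites Proposition \ref{prop:maj-intw-nT} (a bound on the terminal value only) where your invocation of Theorem \ref{thm: maj_sup_int_NTW} is the correct reference for the supremum, and which leaves implicit both the drift term $\mu\int_0^\cdot d^T(u)\,du$ and the absorption of the $1/\sqrt{T}$ coming from $Y^T$ into the stated bound.
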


\begin{proof}
    We start this by using the triangular inequality and Jensen inequality to get:
    \begin{align*}
        \E{\sup_{t\in [0,1]} \left|R^T_t \right|^2} \leq& C \int_0^1 \left|d^T(u)\right|^2 du+ C\sqrt{\E{\sup_{t\in[0,1]} \left| Y^T_t\right|^4} }\\
        &+ C\E{\sup_{t\in[0,1]} \left| M^{\Lambda^T, \Ntil^T}_t\right|^2}+ C\E{\sup_{t\in[0,1]} \left| M^{\Ntil^T-W}_t\right|^2}.
    \end{align*}
    The first term can be control thanks to Propositions \ref{prop: conv_psi_rho_-}, \ref{prop: conv_psi_rho_0} and \ref{prop: conv_psi_rho_+} which yields
    $$\E{\sup_{t\in [0,1]} \left|R^T_t \right|^2} \leq \frac{C}{T}+ C\sqrt{\E{\sup_{t\in[0,1]} \left| Y^T_t\right|^4} }+ C\E{\sup_{t\in[0,1]} \left| M^{\Lambda^T, \Ntil^T}_t\right|^2}+ C\E{\sup_{t\in[0,1]} \left| M^{\Ntil^T-W}_t\right|^2}.$$
   By Propositions \ref{prop: maj_YT} and \ref{prop:maj-intw-nT}, we have:
   $$\E{\sup_{t\in [0,1]} \left|R^T_t \right|^2} \leq C\E{\sup_{t\in[0,1]} \left| M^{\Lambda^T, \Ntil^T}_t\right|^2}+ C\left( \frac{1}{\sqrt{T}}+\frac{1}{k} + \frac{k^2}{T^2} \right).$$
   Moreover, Doob's inequality yields
   \begin{align*}
       \E{\sup_{t\in [0,1]} \left|R^T_t \right|^2} &\leq C\E{\left| M^{\Lambda^T, \Ntil^T}_1\right|^2}+ C\left( \frac{1}{\sqrt{T}}+\frac{1}{k} + \frac{k^2}{T^2} \right)\\
       &\leq \int_0^1 \rho(-s)^2\E{\left| \Lambda^T_s -\bar{\Lambda}^T_s \right|} ds +C\left( \frac{1}{\sqrt{T}}+\frac{1}{k} + \frac{k^2}{T^2} \right).
   \end{align*}
   Finally, by Proposition \ref{prop: discret_lambda_rho} and since $\left\|\rho \right\|_{\LL^\infty(-1,1)} \leq C $, we get:
      \begin{align*}
       \E{\sup_{t\in [0,1]} \left|R^T_t \right|^2} &\leq C\left( \frac{1}{\sqrt{T}}+\frac{1}{\sqrt{k}} + \frac{k^2}{T^2} \right).
   \end{align*}
\end{proof}

\paragraph{Acknowledgment}: This work received support from the University Research School EUR-MINT
(State support managed by the National Research Agency for Future Investments
 program bearing the reference ANR-18-EURE-0023).

\begin{sloppypar}
\printbibliography
\end{sloppypar}
\end{document}